\title{Wasserstein and total variation distance\\[2mm] between marginals of Lévy processes} 
\author{
  Ester~Mariucci\footnote{Otto von Guericke Universität Magdeburg, Germany.
    E-mail: \href{mailto:mariucci@ovgu.de}{mariucci@ovgu.de}. This research was supported by the Federal Ministry for Education and Research through the Sponsorship provided by the Alexander von Humboldt Foundation and partially funded by the Deutsche Forschungsgemeinschaft (DFG, German Research Foundation) – 314838170, GRK 2297 MathCoRe.}
  \and  Markus~Reiß\footnote{Humboldt-Universität zu Berlin, Germany. E-mail: \href{mailto:mreiss@math.hu-berlin.de}{mreiss@math.hu-berlin.de}}}
\newcommand{\p} {\ensuremath {\mathbb{P}}}
\newcommand{\E} {\ensuremath {\mathbb{E}}}
\newcommand{\C} {\ensuremath {\mathbb{C}}}
\newcommand{\N} {\ensuremath {\mathbb{N}}}
\newcommand{\R} {\ensuremath {\mathbb{R}}}
\newcommand{\Z} {\ensuremath {\mathbb{Z}}}
\newcommand{\I} {\ensuremath {\mathbb{I}}}
\newcommand{\F} {\ensuremath {\mathscr{F}}}
\newcommand{\X} {\ensuremath {\mathcal{X}}}
\newcommand{\Li} {\ensuremath {\mathscr{L}}}
\newcommand{\B} {\ensuremath {\mathscr{B}}}
\newcommand{\No} {\ensuremath {\mathcal{N}}}
\newcommand{\Wi} {\ensuremath {\mathcal{W}}}
\newcommand{\var} {\ensuremath {\textnormal{Var}}}
\theoremstyle{plain}
\newtheorem{theorem}{Theorem}
\newtheorem*{result*}{Result}
\newtheorem{lemma}{Lemma}
\newtheorem{proposition}{Proposition}
\newtheorem{corollary}{Corollary}
\theoremstyle{definition}
\newtheorem{definition}{Definition}
\newtheorem{remark}{Remark}
\newtheorem{example}{Example}
\begin{document}
\date{16 July 2018}
\maketitle

\begin{abstract}{We present  upper bounds for the Wasserstein distance of order $p$ between the marginals of Lévy processes, including Gaussian approximations for jumps of infinite activity. Using the convolution structure, we further derive upper bounds for the total variation distance between the marginals of Lévy processes. Connections to other metrics like Zolotarev and Toscani-Fourier distances are established. The theory is illustrated by concrete examples and an application to statistical lower bounds. }
\end{abstract}

\textit{Keywords: }{Lévy Processes; Wasserstein Distance; Total Variation; Toscani-Fourier distance; Statistical lower bound.}

\section{Introduction}

L\'evy processes form the prototype of continuous-time processes with a continuous diffusion and a jump part. In applications, there is a high interest to disentangle these parts based on discrete observations. While A\"it-Sahalia and Jacod \cite{AJ} among many others propose an asymptotically  (as the observation distances become smaller) consistent test on the presence of jumps for general semimartingale models, Neumann and Rei\ss\ \cite{NR} argue that, already inside the class of $\alpha$-stable processes with $\alpha\in(0,2]$, no uniformly consistent test exists. The subtle, but important, difference is the uniformity over the class of processes. Mathematically, the difference is that on the Skorokhod path space $D([0,T])$ $\alpha$-stable processes, $\alpha\in(0,2)$, induce laws singular to that of Brownian motion ($\alpha=2$), while their respective marginals at $t_k=kT/n$, $k=0,\ldots,n$ for $n$ fixed, have equivalent laws, which even converge  in total variation distance as $\alpha\to 2$ to those of Brownian motion. It is our aim here to shed some light on the geometry of the marginal laws of one-dimensional L\'evy processes and to quantify the distance of the marginal laws non-asymptotically as a function of the respective L\'evy characteristics $(b,\sigma^2,\nu)$. The marginals form, of course, infinitely divisible distributions, but we prefer here the process point of view which is sometimes more intuitive.

Let us recall  the fundamental result by Gnedenko and Kolmogorov \cite{GK}.

\begin{theorem}[\cite{GK}]
Marginals of L\'evy processes $X^n=(X^n_t)_{t\ge 0}$ with characteristics $(b_n,\sigma_n^2,\nu_n)$ converge weakly to marginals of a L\'evy process $X=(X_t)_{t\ge 0}$ with characteristics $(b,\sigma^2,\nu)$ if and only if
\[ b_n\to b \text{ and } \sigma_n^2\delta_0+(x^2\wedge 1)\nu_n(dx)\xrightarrow{w} \sigma^2\delta_0+(x^2\wedge 1)\nu(dx),\]
where $\delta_0$ is the Dirac measure in $0$ and $\xrightarrow{w}$ denotes weak convergence of finite measures.
\end{theorem}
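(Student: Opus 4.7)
The strategy is to translate convergence of marginals into pointwise convergence of characteristic functions via Lévy's continuity theorem, and then to recast the Lévy-Khintchine exponents as integrals against the finite measures $\mu_n := \sigma_n^2\delta_0 + (x^2\wedge 1)\nu_n$ and $\mu := \sigma^2\delta_0 + (x^2\wedge 1)\nu$. For any fixed $t>0$ one has $\log\varphi_{X_t^n}(u)=t\bigl(iub_n + \int_{\R} g_u(x)\,\mu_n(dx)\bigr)$, where
\[
g_u(x) = \frac{e^{iux}-1-iux\mathbf{1}_{|x|\le 1}}{x^2\wedge 1},\qquad g_u(0) := -\tfrac{u^2}{2}.
\]
A short Taylor expansion shows that $g_u$ is bounded by $\max(u^2/2,2)$ and continuous except for jumps of size $\pm iu$ at $x=\pm 1$.

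For the sufficiency direction ($\Leftarrow$), weak convergence $\mu_n\xrightarrow{w}\mu$ together with boundedness and almost-everywhere continuity of $g_u$ yields $\int g_u\,d\mu_n\to\int g_u\,d\mu$; the jumps of $g_u$ at $\pm 1$ can be handled by replacing $\mathbf{1}_{|x|\le 1}$ with a continuous truncation function (e.g.\ $h(x)=x/(1+x^2)$) and absorbing the correction into the drift. Combined with $b_n\to b$, this forces pointwise convergence of the characteristic functions, and Lévy's continuity theorem gives weak convergence of the marginals.

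For the necessity direction ($\Rightarrow$), the main task is to deduce tightness of $\{\mu_n\}$ and convergence of the drifts from mere pointwise convergence of $\varphi_{X_t^n}$. A Fourier bound of the type
\[
\sigma_n^2 + \int (x^2\wedge 1)\,\nu_n(dx) \le C\int_{-\delta}^{\delta} \bigl(1 - \operatorname{Re}\varphi_{X_t^n}(u)\bigr)\,du,
\]
obtained by Fubini and elementary estimates on $1-\cos$, yields uniform control on the total mass of $\mu_n$ together with a tail bound, hence tightness. Every subsequential weak limit $(\mu^\ast, b^\ast)$ of $(\mu_n, b_n)$ then produces a Lévy exponent that coincides with the limiting $\psi$; uniqueness of the Lévy-Khintchine representation forces $(\mu^\ast, b^\ast)=(\mu,b)$, so the whole sequence converges.

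I expect the tightness argument together with the careful bookkeeping around the discontinuity of the truncation function at $|x|=1$ to be the main technical obstacle; everything else is essentially a translation exercise between characteristic functions and finite measures on $\R$.
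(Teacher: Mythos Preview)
The paper does not prove this theorem at all: it is stated with the attribution ``\cite{GK}'' and is quoted verbatim from Gnedenko and Kolmogorov as a classical background result, with no accompanying argument. There is therefore no ``paper's own proof'' to compare your proposal against.

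That said, your sketch is a faithful outline of the standard proof one finds in the literature (e.g.\ in Sato or in Gnedenko--Kolmogorov themselves): rewrite the L\'evy--Khintchine exponent as a drift term plus an integral of a bounded kernel $g_u$ against the finite measure $\sigma_n^2\delta_0+(x^2\wedge 1)\nu_n$, handle the discontinuity at $|x|=1$ by passing to a smooth truncation, and for the converse direction extract tightness of these measures from integrated bounds on $1-\operatorname{Re}\varphi_{X_t^n}$, then invoke uniqueness of the L\'evy--Khintchine representation to identify subsequential limits. One minor point: the displayed inequality you wrote for the total mass is not quite the right form---the averaged quantity $\int_{-\delta}^\delta(1-\operatorname{Re}\varphi_{X_t^n}(u))\,du$ most directly controls the \emph{tails} $\nu_n(\{|x|>R\})$ (via $1-\tfrac{\sin(\delta x)}{\delta x}$), while a uniform bound on the full mass $\mu_n(\R)$ is usually obtained from $-\operatorname{Re}\log\varphi_{X_t^n}(u)$ at a single suitably chosen $u$ together with the tail estimate. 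This is a bookkeeping issue rather than a gap in strategy.
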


As a particular example, consider the compound Poisson process with L\'evy measure $\frac{\delta_{-\varepsilon}+\delta_{\varepsilon}}{2\varepsilon^2}$ that has jumps of size $\varepsilon$ and $-\varepsilon$ both at intensity $\frac{1}{2\varepsilon^2}$. Then as $\varepsilon\downarrow 0$, the marginals converge to those of a standard Brownian motion, which can also be derived from Donsker's Theorem. Below, we shall be able to quantify this rate of convergence for general L\'evy processes in terms of the (stronger) $p$-Wasserstein distances $\Wi_p$. The derived Gaussian approximation of the small jump part relies on the fine analysis by Rio \cite{Rio09} of the approximation error  in Wasserstein distance for the central limit theorem. This is the subject of Theorem  \ref{teo:smalljumps}, of which the following is a simplified statement:
\begin{result*}
Let $X^S(\varepsilon)$ be a Lévy process with characteristics $(0, 0, \nu_\varepsilon)$ where $\nu_\varepsilon$ is a L\'evy measure with support in $[-\varepsilon,\varepsilon]$. Introducing $\bar\sigma^2(\varepsilon) = \int_{-\varepsilon}^ {\varepsilon} x^2 \nu_\varepsilon(dx)$, there exists a constant $C$ depending only on $p$ such that:
$$
\Wi_p\big(\Li(X_t^{S}(\varepsilon)),\No(0,t\bar\sigma^2(\varepsilon)) \big) \leq C\min\big(\sqrt t\bar\sigma(\varepsilon) ,\varepsilon\big) \leq C \varepsilon.
$$
\end{result*}

A Gaussian approximation of the small jumps of Lévy processes has already been employed, for example when simulating trajectories of Lévy processes with infinite Lévy measure (see e.g. \cite{tankov}).

The above result is actually an intermediate step for the more general Corollary \ref{res:tensorizationlevy}, that bounds the $p$-Wasserstein distance $\Wi_p$ in $\ell^r(\R^n)$ as follows:

\begin{result*}
 Let $X^j$, $j=1,2$, be two Lévy processes with characteristics $(b_j,\sigma_j^2,\nu_j)$, $j=1,2$. Then for all $\varepsilon\geq 0$, $T>0$ and $n\in\N$ we have
 \begin{align*}
  \Wi_p\Big(&(X_{kT/n}^1-X_{(k-1)T/n}^1)_{k=1}^n,(X_{kT/n}^2-X_{(k-1)T/n}^2)_{k=1}^n\Big)\\
  &\leq T n^{\frac{1}{r}-1} \big|b_1(\varepsilon)-b_2(\varepsilon)\big|
    + T^{1/2} n^{\frac{1}{r}-\frac12}\big|\sigma_1+\bar\sigma_1(\varepsilon)-\sigma_2-\bar\sigma_2(\varepsilon)\big|\\
&\quad  +
 C\sum_{j=1}^2\min\big(T^{1/2}n^{\frac{1}{r}-\frac12}\bar\sigma_j(\varepsilon),n^{\frac{1}{r}}\varepsilon\big)
 +n^{\frac{1}{r}}\Wi_p\big(X_{T/n}^{1,B}(\varepsilon),X_{T/n}^{2,B}(\varepsilon)\big),
 \end{align*}
 where $b_j(\varepsilon):=b_j-\int_{\varepsilon<|x|\leq 1}x\nu_j(dx)$, $\bar\sigma_j^2(\varepsilon):=\int_{|x|\leq\varepsilon}x^2\nu(dx)$ and $C$ is a constant depending only on $p$. The term $\Wi_p(X_{T/n}^{1,B}(\varepsilon),X_{T/n}^{2,B}(\varepsilon))$, involving the jumps larger $\varepsilon$, can be bounded as in Theorem \ref{teo:CPP}.
\end{result*}

Sometimes we can even obtain  bounds on the total variation distance, which for statistical purposes, especially testing, is particularly meaningful. The currently available bound in the literature is by Liese \cite{liese}.

\begin{theorem}[{\cite[Cor. 2.7]{liese}}]
For  L\'evy processes $X^1$ and $X^2$ with characteristics $(b_1,\sigma_1^2,\nu_1)$ and  $(b_2,\sigma_2^2,\nu_2)$, respectively, introduce the squared Hellinger distance of the L\'evy measures (put $\nu_0=\nu_1+\nu_2$):
\[  H^2(\nu_1,\nu_2):= \int_{\R} \bigg(\sqrt{\frac{d\nu_1}{d\nu_0}(x)}-\sqrt{\frac{d\nu_2}{d\nu_0}(x)}\bigg)^2\nu_0(dx).
\]
Then the total variation distance between the laws of $X_t^1$ and $X_t^2$ is bounded as:
\begin{align*}&\|\Li(X_t^1)-\Li(X_t^2)\|_{TV}\\
&\leq 2\sqrt{1-\Big(1-\frac12 H^2\Big(\No(\tilde b_1t,\sigma_1^2t),\No(\tilde b_2t,\sigma_2^2t)\Big)\Big)^2\exp\Big(- t H^2(\nu_1,\nu_2)\Big)}
\end{align*}
with $\tilde b_1=b_1-\int_{-1}^1x\nu_1(dx)$, $\tilde b_2=b_2-\int_{-1}^1x\nu_2(dx)$.
\end{theorem}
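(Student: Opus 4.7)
The plan is to bound total variation via the Hellinger affinity. Recall the Cauchy--Schwarz inequality $\|P-Q\|_{TV}\le 2\sqrt{1-\rho(P,Q)^2}$ with $\rho(P,Q)=\int\sqrt{dP\,dQ}=1-\tfrac12 H^2(P,Q)$, so it suffices to establish the lower bound
\[
\rho\bigl(\Li(X_t^1),\Li(X_t^2)\bigr)\ge\bigl(1-\tfrac12 H^2(G_1,G_2)\bigr)\exp\bigl(-\tfrac12 tH^2(\nu_1,\nu_2)\bigr),
\]
where $G_i=\No(\tilde b_i t,\sigma_i^2 t)$; squaring then yields exactly the claimed inequality.

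The first step uses the Lévy--Itô decomposition $X_t^i=G_t^i+J_t^i$ with $G_t^i\sim G_i$ and pure-jump component $J_t^i$ independent of $G_t^i$. The Hellinger affinity is non-decreasing under push-forward by measurable maps (by a conditional Cauchy--Schwarz argument); applied to the addition map $(g,j)\mapsto g+j$ and combined with the product form $\Li((G_t^i,J_t^i))=\Li(G_t^i)\otimes\Li(J_t^i)$, this gives
\[
\rho\bigl(\Li(X_t^1),\Li(X_t^2)\bigr)\ge\rho(\Li(G_t^1),\Li(G_t^2))\cdot\rho(\Li(J_t^1),\Li(J_t^2)),
\]
and the Gaussian factor equals $1-\tfrac12 H^2(G_1,G_2)$ by definition.

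For the jump factor, realise $J_t^i$ as a measurable functional of the Poisson random measure $N^i$ of jumps of $X^i$ on $[0,t]\times(\R\setminus\{0\})$, whose intensity is $ds\otimes\nu_i(dx)$ (in the infinite-activity case $J_t^i$ is the $L^2$-limit of the compensated truncated-jump sums, hence still $\sigma(N^i)$-measurable). A second application of data processing yields $\rho(\Li(J_t^1),\Li(J_t^2))\ge\rho(\Li(N^1),\Li(N^2))$. For Poisson random measures with finite intensities $\mu_1,\mu_2$, one obtains the clean identity
\[
\rho(\Li(N_{\mu_1}),\Li(N_{\mu_2}))=\exp\bigl(-\tfrac12 H^2(\mu_1,\mu_2)\bigr)
\]
by writing the likelihoods $dP_{\mu_i}/dP_{\mu_1+\mu_2}$ as exponentials of point functionals and invoking Campbell's formula to evaluate $\E_{P_{\mu_1+\mu_2}}[\sqrt{(dP_{\mu_1}/dP_{\mu_1+\mu_2})(dP_{\mu_2}/dP_{\mu_1+\mu_2})}]$; an exhaustion argument extends the identity to $\sigma$-finite intensities. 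Substituting $\mu_i=ds\otimes\nu_i$ produces the jump factor $\exp(-\tfrac12 tH^2(\nu_1,\nu_2))$, and multiplying the two factors completes the proof.

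The main technical obstacle is the data-processing step for the jump part when the Lévy measures are infinite, since there $J_t^i$ is defined only as an $L^2$-limit of compensated sums; I would handle this by first proving the bound for the process truncated at jump size $\varepsilon$ (where the jump component is an honest compound Poisson with a mean shift absorbed into the drift), and then letting $\varepsilon\downarrow 0$, using continuity of $H^2$ in its arguments together with convergence in distribution of the truncated marginals to $\Li(X_t^i)$. The Gaussian-affinity computation and the Campbell-formula step are otherwise routine.
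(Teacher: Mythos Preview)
This theorem is not proved in the paper: it is quoted in the introduction as a known result of Liese, serving as a benchmark whose looseness (for instance when $tH^2(\nu_1,\nu_2)$ is large, as in the $\alpha$-stable versus Brownian comparison) motivates the paper's own total-variation bounds. There is thus no in-paper proof to compare your attempt against.

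For what it is worth, your sketch is the standard route and is essentially correct: pass from total variation to Hellinger affinity via $\|P-Q\|_{TV}\le 2\sqrt{1-\rho(P,Q)^2}$; use exact multiplicativity of $\rho$ over the independent Gaussian and jump tensor factors followed by data processing for the addition map $(g,j)\mapsto g+j$; and compute the Poisson-random-measure affinity $\rho(P_{\mu_1},P_{\mu_2})=\exp\bigl(-\tfrac12 H^2(\mu_1,\mu_2)\bigr)$ via the Laplace functional. Two minor technical remarks. First, the very definition $\tilde b_i=b_i-\int_{-1}^1 x\,\nu_i(dx)$ presupposes $\int_{|x|\le 1}|x|\,\nu_i(dx)<\infty$, so the statement as written already lives in the finite-variation regime and your $\varepsilon\downarrow 0$ truncation step is not strictly necessary (one can decompose directly into a Gaussian plus an uncompensated jump part), though it does no harm. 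Second, in that limiting step you appeal to ``continuity of $H^2$'' under weak convergence; only lower semi-continuity holds in general, but this is exactly the inequality you need, since it yields $\rho(\Li(X_t^1),\Li(X_t^2))\ge\limsup_{\varepsilon\downarrow 0}\rho(\Li(X_t^{1,\varepsilon}),\Li(X_t^{2,\varepsilon}))$.
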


Note that the bound is very loose or  even trivial in the case $\nu_2=0$ and $\lambda_1=\nu_1(\R)>1/t$ because then $tH^2(\nu_1,\nu_2)=t\lambda_1>1$. So, this bound does not allow to deduce a total variation approximation of Brownian motion by jump processes of infinite jump activity like $\alpha$-stable processes with $\alpha\uparrow 2$. In fact, for pure jump L\'evy processes these bounds are analogous to the bounds by Mémin and Shiryayev \cite{MS} in the path space $D([0,T])$, where pure jump processes and Brownian motion have singular laws (for other results on distances on $D([0,T])$ see e.g. \cite{etore14, g13,jacod,Ku}). Our main idea is to use the convolutional structure of the laws to transfer bounds from Wasserstein to total variation distance. This strategy is implemented for L\'evy processes with a non-zero Gaussian component (but without any restriction on the Lévy measures, which can be infinite, and even with infinite variation) and yields Theorem \ref{th:MainTV}:

\begin{result*}
For  L\'evy processes $X^1$ and $X^2$ with characteristics $(b_j,\sigma_j^2,\nu_j)$ and $\sigma_j>0$, $j=1,2$, we have for all $t>0$, $\varepsilon\in [0,1]$:
 \begin{align*}
\big\|\Li(X_t^1)-\Li(X_t^2)&\big\|_{TV} \leq  \frac{\sqrt{\frac{t}{2\pi}} \Big|b_1(\varepsilon)-b_2(\varepsilon)\Big|+\sqrt 2\Big|\sqrt{\sigma_1^2+\bar\sigma_1^2(\varepsilon)}-\sqrt{\sigma_2^2+\bar\sigma_2^2(\varepsilon)}\Big|}{\sqrt{\sigma_1^2+\bar\sigma_1^2(\varepsilon)}\vee \sqrt{\sigma_2^2+\bar\sigma_2^2(\varepsilon)}}\\
&\quad+ \sum_{j=1}^2\sqrt{\frac{2}{\pi t \sigma_j^2}}
 \min\Big(2\sqrt{t\bar \sigma_j^2(\varepsilon)}, \frac{\varepsilon}{2}\Big)\\
 &\quad
 +t\big|\lambda_1(\varepsilon)-\lambda_2(\varepsilon)\big|+t\big(\lambda_1(\varepsilon)\wedge\lambda_2(\varepsilon)\big)
 \bigg\|\frac{\nu_1^\varepsilon}{\lambda_1(\varepsilon)}-\frac{\nu_2^\varepsilon}{\lambda_2(\varepsilon)}\bigg\|_{TV},
 \end{align*}
 with the above notation, $\nu_j^\varepsilon=\nu_j(\cdot\setminus(-\varepsilon,\varepsilon))$ and $\lambda_j(\varepsilon)=\nu_j^\varepsilon(\R)$.
\end{result*}

The results proven in this paper provide further insight in the geometry of the space of discretely observed Lévy processes. At the same time,  their nonasymptotic character finds fruitful applications in nonparametric statistics, when proving general lower bounds in a minimax sense.  The technology is shown at work in Section \ref{subsec:JR}, making the original proof by Jacod and Rei\ss\ \cite{JR} for volatility estimation under high activity jumps simpler and much more transparent.

The results are stated in dimension one. After the first version of this paper was completed, however, new results for non-asymptotic multidimensional central limit theorem in Wasserstein distances have appeared (see e.g. \cite{TB}). Since a (special form of a) central limit theorem was the main technical tool in our proof of Theorem \ref{teo:smalljumps}, this makes a multidimensional extension of our findings a promising future research direction that seems worth investigating. Another potentially fruitful line of research would be to go beyond the independence structure of the increments and consider the general framework of semimartingales. Lévy processes are the basic building blocks for these more general processes and it is common to use this easier setting as a first step towards a more general proof; however, the techniques that were used in this paper heavily depend on the independence structure and do not directly extend to this more general framework.

The paper is organized as follows. In Section 2 we review basic properties of the Wasserstein distances and discuss their relationship with the Zolotarev and Toscani-Fourier distances. Then we recall the main non-asymptotic bounds for the Wasserstein distances in the CLT and introduce L\'evy processes.
Section 3 derives bounds between marginals of L\'evy processes in Wasserstein distance. The main focus is on the small jump part, which is treated in Theorem \ref{teo:smalljumps} and for which the tightness of the bounds is discussed in detail, first for concrete examples and then more generally using a lower bound via the Toscani-Fourier distance. Main results are presented in Section 3.3. Section 4 introduces  properties of the total variation distance and then shows how bounds in Wasserstein or Toscani-Fourier distance transfer under convolution to total variation bounds, see e.g. Proposition \ref{prop:TV} and Proposition \ref{prop:CTMR}. For Gaussian convolutions the different bounds are first compared and then applied to the marginals of L\'evy processes. Section 5 is devoted to the application of the total variation bounds for proving the minimax-optimality of integrated volatility estimators in the presence of jumps proposed in \cite{JR}.

\section{Preliminaries}
\subsection{The Wasserstein distances}
Let $(\mathcal X,d)$ be a Polish metric space. Given $p\in[1,\infty)$, let $\mathcal P_p(\X)$ denote the space of all Borel probability measures
$\mu$ on $\X$ such that the moment bound
\begin{equation*}
 \E_\mu[d(X,x_0)^p]<\infty
\end{equation*}
holds for some (and hence all) $x_0\in \X$.
\begin{definition}
Given $p\geq 1$, for any two probability measures $\mu,\nu\in\mathcal P_p(\X)$, the \emph{Wasserstein distance of order $p$}
between $\mu$ and $\nu$ is defined by
\begin{equation}\label{eq:defw}
\Wi_p(\mu,\nu)=\inf\Big\{\big[\E [d(X',Y')^p]\big]^{\frac{1}{p}},\ \Li(X')=\mu, \ \Li(Y')=\nu\Big\},
\end{equation}
where the infimum is taken over all random variables $X'$ and $Y'$ having laws $\mu$ and $\nu$, respectively. We abbreviate $\Wi_p(X,Y)=\Wi_p(\Li(X),\Li(Y))$ for random variables $X,Y$ with laws $\Li(X),\Li(Y)\in\mathcal P_p(\X)$.
\end{definition}

The following lemma introduces some properties of the Wasserstein distances that we will use throughout the paper. For a proof, the reader is
referred to \cite{villani09}, Chapter 6.
\begin{lemma}\label{lemma:w1}
 The Wasserstein distances have the following properties:
 \begin{enumerate}[(1)]
  \item For all $p\geq 1$, $\Wi_p(\cdot,\cdot)$ is a metric on $\mathcal P_p(\X)$.
  \item If $1\leq p\leq q$, then $\mathcal P_q(\X)\subseteq\mathcal P_p(\X)$, and $\Wi_p(\mu,\nu)\leq \Wi_q(\mu,\nu)$ for every
  $\mu,\nu\in\mathcal P_q(\X)$.
  \item Given a sequence $(\mu_n)_{n\geq 1}$ and a probability measure $\mu$ in $\mathcal P_p(\X)$
  $$\lim_{n\to\infty}\Wi_p(\mu_n,\mu)=0$$
  if and only if $(\mu_n)_{n\geq 1}$ converges to $\mu$ weakly and for some (and hence all) $x_0\in\X$
   $$\lim_{n\to\infty}\int_{\X}d(x,x_0)^p\mu_n(dx)=\int_{\X}d(x,x_0)^p\mu(dx).$$
   \item \label{eq:coupling} The infimum in \eqref{eq:defw} is actually a minimum; i.e., there exists a pair $(X^*,Y^*)$ of jointly distributed $\X$-valued random
   variables with $\Li(X^*)=\mu$ and $\Li(Y^*)=\nu$, such that
   \begin{equation*}
\Wi_p(\mu,\nu)^p=\E[d(X^*,Y^*)^p].
   \end{equation*}
  \end{enumerate}
\end{lemma}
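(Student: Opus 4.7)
The plan is to address the four items in turn, since each relies on a distinct standard technique. The non-negativity and symmetry asserted in (1) are immediate from the definition, so the metric property really amounts to two claims: non-degeneracy and the triangle inequality. For non-degeneracy I would first prove item (4) and then use it: if $\Wi_p(\mu,\nu)=0$ is attained by a coupling $(X^*,Y^*)$, then $X^*=Y^*$ almost surely, forcing $\mu=\nu$. For the triangle inequality the key tool is the gluing lemma. Given $\mu_1,\mu_2,\mu_3\in\mathcal P_p(\X)$, I would pick (almost) optimal couplings $\pi_{12}$ of $(\mu_1,\mu_2)$ and $\pi_{23}$ of $(\mu_2,\mu_3)$ and use regular conditional probabilities (available because $\X$ is Polish) to construct a probability measure on $\X^3$ with marginals $\pi_{12}$ on the first two coordinates and $\pi_{23}$ on the last two. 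The triangle inequality for $d$ together with Minkowski's inequality in $L^p$ then yields $\Wi_p(\mu_1,\mu_3)\le \Wi_p(\mu_1,\mu_2)+\Wi_p(\mu_2,\mu_3)$.

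Item (2) is essentially Jensen's inequality: for any coupling $(X',Y')$ of $\mu$ and $\nu$, the function $z\mapsto z^{q/p}$ is convex on $[0,\infty)$, so $(\E[d(X',Y')^p])^{1/p}\le(\E[d(X',Y')^q])^{1/q}$. Taking the infimum over couplings on the right gives $\Wi_p(\mu,\nu)\le\Wi_q(\mu,\nu)$, and the inclusion $\mathcal P_q(\X)\subseteq\mathcal P_p(\X)$ follows from the same inequality applied to $\nu=\delta_{x_0}$.

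For item (4) I would set up the variational problem on the set $\Pi(\mu,\nu)$ of couplings, which is nonempty (it contains $\mu\otimes\nu$) and tight because $\{\mu\}$ and $\{\nu\}$ are tight as singletons in a Polish space, hence weakly sequentially compact by Prokhorov. The cost functional $\pi\mapsto\int d(x,y)^p\,d\pi$ is lower semicontinuous with respect to weak convergence because $d^p$ is lower semicontinuous and nonnegative; a minimising sequence therefore has a weak subsequential limit $\pi^*$ that still has marginals $\mu$ and $\nu$ (projection is continuous) and achieves the infimum. The resulting pair $(X^*,Y^*)$ is the required optimal coupling.

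Item (3) is the most delicate piece. The forward direction decomposes into weak convergence, which I would get from $\Wi_p\ge\Wi_1$ together with the Kantorovich--Rubinstein duality that identifies $\Wi_1$-convergence with convergence against bounded Lipschitz test functions, and convergence of the $p$-th moments, which follows from the triangle inequality $|d(x,x_0)-d(y,x_0)|\le d(x,y)$ and the inverse Minkowski inequality in $L^p$ applied to an optimal coupling. For the converse the main obstacle is that weak convergence alone does not transfer to the unbounded test function $d(\cdot,x_0)^p$; here I would exploit the assumed moment convergence to upgrade weak convergence to uniform integrability of $d(\cdot,x_0)^p$ under $\mu_n$. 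Via Skorokhod's representation on a common probability space, $d(X_n,X)\to 0$ almost surely, and $d(X_n,X)^p$ is uniformly integrable because it is dominated by $C(d(X_n,x_0)^p+d(X,x_0)^p)$. Dominated convergence then gives $\E[d(X_n,X)^p]\to 0$, hence $\Wi_p(\mu_n,\mu)\to 0$. The uniform integrability step is the point that requires the most care, as it is what fails when one only has weak convergence without the moment assumption.
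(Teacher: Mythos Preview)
The paper does not actually give a proof of this lemma; it simply refers the reader to Chapter~6 of Villani's \emph{Optimal Transport}. Your sketch is correct and is precisely the standard route taken there: gluing for the triangle inequality, Prokhorov plus lower semicontinuity of the cost for existence of an optimal coupling, Jensen for the monotonicity in $p$, and Skorokhod representation combined with uniform integrability for the characterisation of $\Wi_p$-convergence. One cosmetic point: in the last step you invoke ``dominated convergence'' after having argued uniform integrability; the theorem you are actually applying is Vitali's, not Lebesgue's, but the logic is sound.
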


Following the terminology used in \cite{zolbook}, we can say that the Wasserstein distances are \emph{ideal metrics} since they possess the following
two properties.

\begin{lemma}\label{lemma:proprieta}
 Let $\X$ be a separable Banach space. For any three $\X$-valued random variables $X,Y, Z$, with $Z$ independent of $X$ and $Y$, the inequality
 $$\Wi_p(X+Z,Y+Z)\leq\Wi_p(X,Y)$$
 holds. Furthermore, for any real constant $c$, we have
 \begin{equation}\label{eq:hom}
\Wi_p(cX,cY)=|c|\Wi_p(X,Y).
 \end{equation}
 \end{lemma}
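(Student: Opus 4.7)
Both inequalities will be established by constructing an explicit coupling and invoking the existence of an optimal coupling granted by Lemma~\ref{lemma:w1}\eqref{eq:coupling}. For the translation inequality, let $(X^\ast,Y^\ast)$ be an optimal coupling of $\Li(X)$ and $\Li(Y)$, so that $\Wi_p(X,Y)^p=\E[d(X^\ast,Y^\ast)^p]$. Enlarge the probability space if necessary to carry a random variable $Z'$ with $\Li(Z')=\Li(Z)$ that is independent of the pair $(X^\ast,Y^\ast)$; such a construction is standard on the product space $(\X\times\X)\times\X$ equipped with $\Li(X^\ast,Y^\ast)\otimes\Li(Z)$.

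Since $Z$ is assumed independent of $X$ and of $Y$ separately, the marginal law of $X+Z$ coincides with that of $X^\ast+Z'$ (both are the convolution $\Li(X)\ast\Li(Z)$), and similarly for $Y+Z$ and $Y^\ast+Z'$. Thus $(X^\ast+Z',Y^\ast+Z')$ is an admissible coupling in the definition \eqref{eq:defw} of $\Wi_p(X+Z,Y+Z)$. Because $\X$ is a Banach space, the metric is translation invariant: $d(x+z,y+z)=\|x-y\|=d(x,y)$. Consequently
\[
\Wi_p(X+Z,Y+Z)^p\le \E[d(X^\ast+Z',Y^\ast+Z')^p]=\E[d(X^\ast,Y^\ast)^p]=\Wi_p(X,Y)^p,
\]
which gives the first assertion.

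For the homogeneity identity \eqref{eq:hom}, fix $c\in\R$ with $c\neq 0$ (the case $c=0$ is trivial). Starting again from an optimal coupling $(X^\ast,Y^\ast)$ of $\Li(X)$ and $\Li(Y)$, the pair $(cX^\ast,cY^\ast)$ is a coupling of $\Li(cX)$ and $\Li(cY)$, and in the Banach space $\X$ we have $d(cX^\ast,cY^\ast)=|c|\,d(X^\ast,Y^\ast)$. Taking $p$-th moments and infima yields $\Wi_p(cX,cY)\le|c|\,\Wi_p(X,Y)$. Applying the same inequality to the pair $(cX,cY)$ with the scalar $1/c$ in place of $c$ gives the reverse bound $\Wi_p(X,Y)\le|c|^{-1}\Wi_p(cX,cY)$, and the two together deliver equality.

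\textbf{Anticipated difficulty.} The only non-routine point is the construction of an independent copy $Z'$ of $Z$ that is jointly defined with the optimal coupling $(X^\ast,Y^\ast)$. This is not automatic on the original probability space, but is handled by working on an enlarged product space; once this is set up, translation invariance of the Banach norm and positive homogeneity together with the coupling definition reduce both statements to a one-line computation.
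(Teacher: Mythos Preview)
Your proof is correct and follows essentially the same coupling argument as the paper: take an optimal coupling $(X^\ast,Y^\ast)$, adjoin an independent copy of $Z$, and use translation invariance of the norm. You are in fact more careful than the paper on two points: you explicitly justify the enlargement of the probability space to obtain $Z'$ independent of $(X^\ast,Y^\ast)$ (the paper simply asserts that $X^\ast,Y^\ast$ can be taken independent of $Z$), and for \eqref{eq:hom} you give the clean two-sided scaling argument, whereas the paper merely invokes ``homogeneity of the expectation''.
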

\begin{proof}
 Lemma \ref{lemma:w1} guarantees the existence of two random variables $X^*,Y^*$, independent of $Z$, such that
 $$\Wi_p(X,Y)=\big(\E[ d(X^*,Y^*)^p]\big)^{1/p}.$$ We have:
 \begin{align*}
  \Wi_p(X+Z,Y+Z)&\leq\big(\E\big[d(X^*+Z,Y^*+Z)^p\big]\big)^{\frac{1}{p}}=\big(\E\big[d(X^*,Y^*)^p\big]\big)^{\frac{1}{p}}=\Wi_p(X,Y).
 \end{align*}
 The equality \eqref{eq:hom} follows by homogeneity of the expectation.
\end{proof}

An immediate corollary of Lemma \ref{lemma:proprieta} is the subadditivity of the metric $\Wi_p$ under independence (or equivalently under convolution of laws).

\begin{corollary}\label{cor:subadd}
If $X_1,\dots,X_n$ are independent random variables as well as $Y_1,\dots,Y_n$, then
$$\mathcal W_p(X_1+\dots +X_n, Y_1+\dots +Y_n)\leq \sum_{i=1}^n\mathcal W_p(X_i,Y_i).$$
\end{corollary}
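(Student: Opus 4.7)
The plan is to proceed by induction on $n$, with the $n=1$ case being trivial. For the inductive step, introduce the partial sums $S_X=X_1+\cdots+X_{n-1}$ and $S_Y=Y_1+\cdots+Y_{n-1}$ and apply the triangle inequality from Lemma \ref{lemma:w1}(1) at the point $S_X+Y_n$:
\[
\Wi_p(S_X+X_n,\,S_Y+Y_n)\le \Wi_p(S_X+X_n,\,S_X+Y_n)+\Wi_p(S_X+Y_n,\,S_Y+Y_n).
\]
Then Lemma \ref{lemma:proprieta} bounds the first summand by $\Wi_p(X_n,Y_n)$ (shifting by $Z=S_X$) and the second by $\Wi_p(S_X,S_Y)$ (shifting by $Z=Y_n$). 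Closing the induction via $\Wi_p(S_X,S_Y)\le\sum_{i=1}^{n-1}\Wi_p(X_i,Y_i)$ yields the claim.

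The only technical point to verify is the independence hypothesis in Lemma \ref{lemma:proprieta}: for the first summand we need $S_X$ independent of the pair $(X_n,Y_n)$, and for the second we need $Y_n$ independent of the pair $(S_X,S_Y)$. Neither of these is guaranteed a priori, since the joint law of the $X_i$'s together with the $Y_i$'s is not specified. However, because $\Wi_p$ depends only on the marginal laws of its two arguments, we are free to realise all random variables on a common probability space with the entire vector $(X_1,\dots,X_n)$ independent of the entire vector $(Y_1,\dots,Y_n)$ while preserving the prescribed within-vector independence structure. Under such a coupling the two required independence conditions are automatic.

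As an alternative one-shot route, one may pick, using Lemma \ref{lemma:w1}(\ref{eq:coupling}), optimal couplings $(X_i^\ast,Y_i^\ast)$ for each $i$ and take these pairs mutually independent across $i$. Then $(X_1^\ast,\dots,X_n^\ast)$ and $(Y_1^\ast,\dots,Y_n^\ast)$ have the correct joint laws, and Minkowski's inequality in $L^p$ gives
\[
\Wi_p\Big(\sum_i X_i,\sum_i Y_i\Big)\le \Big(\E\Big\|\sum_{i=1}^n (X_i^\ast-Y_i^\ast)\Big\|^p\Big)^{1/p}\le \sum_{i=1}^n \Wi_p(X_i,Y_i),
\]
which is cleaner but makes slightly more visible use of the Banach-space norm. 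Either way, the essential content is just triangle inequality plus the convolution-stability of Lemma \ref{lemma:proprieta}, and no genuine obstacle arises beyond the independence bookkeeping noted above.
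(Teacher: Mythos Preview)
Your proof is correct and follows essentially the same route as the paper: reduce by induction to a single step, apply the triangle inequality at an intermediate sum, and invoke Lemma~\ref{lemma:proprieta} twice. The paper handles the independence bookkeeping by introducing an auxiliary copy $\tilde X_2$ independent of $Y_1$ and $X_1$, which is just a variant of your device of realising the $X$- and $Y$-vectors independently on a common space; your alternative Minkowski argument is a valid shortcut not given in the paper.
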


\begin{proof}
By induction, it suffices to prove the case $n=2$.
Let $\tilde X_2$ be a random variable equal in law to $X_2$ and independent of $Y_1$ and of $X_1$. By means of Lemma \ref{lemma:proprieta} we have
\begin{align*}
&\mathcal W_p(Y_1+\tilde X_2, Y_1+Y_2)\leq \mathcal W_p(\tilde X_2,Y_2)=\mathcal W_p(X_2,Y_2),\\
&\mathcal W_p(X_1+ X_2, Y_1+\tilde X_2)\leq \mathcal W_p(X_1, Y_1).
\end{align*}
Hence, by triangle inequality $\mathcal W_p(X_1+X_2, Y_1+Y_2)\leq \mathcal W_p(X_1,Y_1)+\mathcal W_p(X_2,Y_2)$ follows.
\end{proof}
A useful property of the Wasserstein distances is their good behaviour with respect to products of measures.

\begin{lemma}[Tensorisation]\label{property:tensorization}
Let $(\X,d)$ be $\R^n$ endowed with the distance
$d(x,y)=(\sum_{i=1}^n{|x_i-y_i|}^r)^{1/r}$, $r \geq 1$, for all $x=(x_1,\dots,x_n)$, $y=(y_1,\dots,y_n)$ and let $\mu=\bigotimes_{i=1}^n\mu_i$
and $\nu=\bigotimes_{i=1}^n\nu_i$ be two product measures on $\R^n$.
Then,
\begin{equation*}
\Wi_p(\mu, \nu)^p \leq \max(n^{\frac{p}{r}-1},1)\sum_{i=1}^n \Wi_p(\mu_i, \nu_i)^p.
\end{equation*}
In the special case where $\mu_1=\dots=\mu_n$ and $\nu_1=\dots=\nu_n$, the following stricter inequality holds:
$$
\Wi_p(\mu, \nu)^p \leq n^{\frac{p}{r}} \Wi_p(\mu_1,\nu_1)^p.
$$
\end{lemma}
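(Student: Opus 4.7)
The plan is to construct an independent coupling across coordinates from individual $\Wi_p$-optimal couplings, and then collapse the resulting $\ell^r$-expectation to a sum of one-dimensional $\Wi_p$ terms via a pointwise power-mean inequality.

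For each $i\in\{1,\dots,n\}$, Lemma \ref{lemma:w1}(\ref{eq:coupling}) produces a $\Wi_p$-optimal coupling $(X_i^*,Y_i^*)$ of $(\mu_i,\nu_i)$ with $\E[|X_i^*-Y_i^*|^p]=\Wi_p(\mu_i,\nu_i)^p$. I would realize the $n$ pairs as mutually independent on a common product probability space, so that $X^*:=(X_1^*,\dots,X_n^*)\sim\mu$ and $Y^*:=(Y_1^*,\dots,Y_n^*)\sim\nu$. The coupling definition of Wasserstein distance then yields
\[
 \Wi_p(\mu,\nu)^p\le\E\big[d(X^*,Y^*)^p\big]
 =\E\Big[\Big(\sum_{i=1}^n|X_i^*-Y_i^*|^r\Big)^{p/r}\Big].
\]

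The analytic heart of the argument is the elementary pointwise bound
\[
 \Big(\sum_{i=1}^n a_i^r\Big)^{p/r}\le\max\!\big(n^{p/r-1},1\big)\sum_{i=1}^n a_i^p, \qquad a_i\ge 0,
\]
split into two cases. When $p\ge r$ the exponent $p/r\ge 1$ makes $t\mapsto t^{p/r}$ convex, so Jensen's inequality against the uniform probability measure on $\{1,\dots,n\}$ delivers the factor $n^{p/r-1}$. When $p\le r$ the exponent lies in $(0,1]$, and iterating the classical subadditivity $(u+v)^\alpha\le u^\alpha+v^\alpha$ for $\alpha\in(0,1]$ gives the factor $1$. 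Substituting $a_i=|X_i^*-Y_i^*|$ and taking expectations, using the independence of the pairs to identify $\E[|X_i^*-Y_i^*|^p]=\Wi_p(\mu_i,\nu_i)^p$, produces the first claimed inequality.

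The stricter identical-marginals bound follows by direct specialization: with $\mu_i\equiv\mu_1$ and $\nu_i\equiv\nu_1$ each summand $\Wi_p(\mu_i,\nu_i)^p$ equals $\Wi_p(\mu_1,\nu_1)^p$, and the right-hand side of the first bound collapses to $\max(n^{p/r-1},1)\cdot n\,\Wi_p(\mu_1,\nu_1)^p=n^{p/r}\Wi_p(\mu_1,\nu_1)^p$ in the regime $p\ge r$. In the complementary regime $p\le r$ one repeats the argument with an i.i.d.\ choice of coupling but now applies Jensen in the concave direction, $\E[Z^{p/r}]\le(\E Z)^{p/r}$ with $Z=\sum_{i=1}^n|X_i^*-Y_i^*|^r$, giving the analogue. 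No serious obstacle is expected; the only real care needed is to match the direction of Jensen's inequality to the sign of $p/r-1$, which is exactly why the factor $\max(n^{p/r-1},1)$ appears.
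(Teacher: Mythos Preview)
Your proof of the general inequality is correct and essentially identical to the paper's: build the independent product of one-dimensional $\Wi_p$-optimal couplings and apply the pointwise power-mean inequality, splitting into the convex ($p\ge r$) and subadditive ($p\le r$) cases.

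For the identical-marginals bound your argument in the regime $p\ge r$ is fine, but in the regime $p<r$ there is a genuine gap. Your Jensen step yields
\[
\E\Big[\Big(\sum_i|X_i^*-Y_i^*|^r\Big)^{p/r}\Big]\le n^{p/r}\big(\E|X_1^*-Y_1^*|^r\big)^{p/r},
\]
and you then need $\big(\E|X_1^*-Y_1^*|^r\big)^{p/r}\le\E|X_1^*-Y_1^*|^p=\Wi_p(\mu_1,\nu_1)^p$. But Lyapunov's inequality gives the \emph{reverse}: $\big(\E|X_1^*-Y_1^*|^r\big)^{1/r}\ge\big(\E|X_1^*-Y_1^*|^p\big)^{1/p}$ for $r\ge p$, so the chain does not close. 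The paper takes a different route here, choosing $X_1^*=\dots=X_n^*$ and $Y_1^*=\dots=Y_n^*$, which makes $d(X^*,Y^*)=n^{1/r}|X_1^*-Y_1^*|$ and produces the factor $n^{p/r}$ directly. Note, however, that with this choice $(X_1^*,\dots,X_n^*)$ has the diagonal law, not the product law $\mu_1^{\otimes n}$, so it is not a coupling of $\mu$ and $\nu$ either. In fact the stricter inequality can fail for $p<r$: with $p=1$, $r=2$, $n=2$, $\mu_1=(1-\varepsilon)\delta_0+\varepsilon\delta_1$ and $\nu_1=(1-\varepsilon)\delta_0+\varepsilon\delta_{1+\varepsilon^{-1/2}}$, one has $\Wi_1(\mu_1,\nu_1)=\varepsilon^{1/2}$ while the (optimal) product coupling gives $\Wi_1(\mu_1^{\otimes 2},\nu_1^{\otimes 2})=\varepsilon^{1/2}\big(2(1-\varepsilon)+\sqrt 2\,\varepsilon\big)>\sqrt 2\,\varepsilon^{1/2}$ for small $\varepsilon$. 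So your gap is not repairable along these lines; the issue lies with the statement itself in the regime $p<r$.
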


\begin{proof}
 Thanks to Point \eqref{eq:coupling} in Lemma \ref{lemma:w1}, we can always find two  random vectors $X^{*,n}=(X_1^*,\dots,X_n^*)$,
 $Y^{*,n}=(Y_1^*,\dots,Y_n^*)$
 with independent coordinates such that $\mu_i=\Li(X_i^*)$, $\nu_i=\Li(Y_i^*)$ and
 $\Wi_p(\mu_i,\nu_i)=\E[|X_i^*-Y_i^*|^p]^{1/p}$. In particular, we have:
 \begin{align}\label{eq:prodotto}
  \Wi_p(\mu,\nu)^p&\leq\E\big[ d((X_i^*)_i,(Y_i^*)_i)^p\big]=
  \E \bigg[\bigg(\sum_{i=1}^n|X_i^*-Y_i^*|^r\bigg)^{p/r}\bigg].
 \end{align}
 If $p\geq r$, by means of the elementary inequality $(z_1+\dots +z_n)^q\leq n^{q-1}(z_1^q+\dots+z_n^q)$, $q\geq 1$, we deduce from \eqref{eq:prodotto} that
 $$\Wi_p(\mu,\nu)^p\leq n^{p/r-1}\sum_{i=1}^n\E[|X_i^*-Y_i^*|^p]=n^{p/r-1}\sum_{i=1}^n\Wi_p(\mu_i,\nu_i)^p.$$
 Similarly, if $p<r$, the proof follows by the inequality $(|z_1|+\dots+|z_n|)^{1/q}\leq|z_1|^{1/q}+\dots+|z_n|^{1/q}$, $q\geq 1$.

 In the case where $\mu_1=\dots=\mu_n$ and $\nu_1=\dots=\nu_n$, one may choose $X_1^*=\dots=X_n^*$ and $Y_1^*=\dots=Y_n^*$. The conclusion readily follows.
 \end{proof}
 The distance $\Wi_1$ is commonly called the \emph{Kantorovich-Rubinstein distance} and it can be characterized in many different ways.
Some useful properties of the distance $\Wi_1$ are the following.

\begin{proposition}\label{pro:lip}[See \cite{GS}]
 Let $X$ and $Y$ be integrable real random variables. Denote by $\mu$ and $\nu$ their laws and by $F$ and $G$ their cumulative distribution functions, respectively. Then the following characterizations of the Wasserstein distance of order $1$ hold:
 \begin{enumerate}
  \item $\displaystyle{\Wi_1(X,Y)=\int_{\R}|F(x)-G(x)|dx},$
  \item $\displaystyle{\Wi_1(X,Y)=\int_0^1|F^{-1}(t)-G^{-1}(t)|dt},$
  \item $\Wi_1(X,Y)=\sup_{\|\psi\|_{\textnormal{Lip}}\leq 1}\bigg(\int_\R\psi d\mu-\int_\R\psi d\nu\bigg),$
  the supremum being taken over all $\psi$ satisfying the Lipschitz condition $|\psi(x)-\psi(y)|\leq |x-y|$, for all $x,y\in\R$. This property is generally called Kantorovich-Rubinstein formula.
 \end{enumerate}
\end{proposition}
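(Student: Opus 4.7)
The plan is to prove identity (2) by sandwiching $\Wi_1(X,Y)$ between the cost of the explicit comonotone coupling and a universal lower bound valid for every coupling, then to equate (1) and (2) by Fubini, and finally to deduce the Kantorovich--Rubinstein duality (3) from (1) by integration by parts.

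For the upper bound in (2) I would introduce the comonotone coupling $X^*:=F^{-1}(U),\ Y^*:=G^{-1}(U)$ with $U\sim\text{Unif}(0,1)$ and left-continuous generalised inverses; this has the correct marginals, so by the very definition of $\Wi_1$,
\[
\Wi_1(X,Y)\le \E|F^{-1}(U)-G^{-1}(U)|=\int_0^1|F^{-1}(t)-G^{-1}(t)|\,dt.
\]
For the matching lower bound I would start from the elementary identity $|a-b|=\int_{\R}|\mathbf{1}_{a\le x}-\mathbf{1}_{b\le x}|\,dx$ and apply it to an arbitrary coupling $(X',Y')$ of $(\mu,\nu)$; Fubini combined with the triangle inequality for expectations yields
\[
\E|X'-Y'|=\int_{\R}\E|\mathbf{1}_{X'\le x}-\mathbf{1}_{Y'\le x}|\,dx \;\ge\; \int_{\R}|F(x)-G(x)|\,dx,
\]
so $\Wi_1(X,Y)\ge \int_{\R}|F-G|\,dx$. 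The equivalence $F^{-1}(t)\le x\iff t\le F(x)$ together with a second Fubini then gives $\int_0^1|F^{-1}-G^{-1}|\,dt=\int_{\R}|F-G|\,dx$, which closes the sandwich and proves (1) and (2) simultaneously.

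For (3) I would use that every $1$-Lipschitz $\psi:\R\to\R$ is absolutely continuous with $|\psi'|\le 1$ a.e., so integration by parts applied to $\int\psi\,d(F-G)$ produces $\int\psi\,d(\mu-\nu)=-\int_{\R}\psi'(x)(F(x)-G(x))\,dx$. Taking absolute values yields $|\int\psi\,d(\mu-\nu)|\le \int_{\R}|F-G|\,dx$, while the explicit choice $\psi(x):=\int_0^x\operatorname{sgn}(G(y)-F(y))\,dy$ is $1$-Lipschitz and realises equality; combined with (1) this gives (3). The main technical subtlety lies in this integration-by-parts step: the boundary terms $\psi(x)(F(x)-G(x))$ must vanish at $\pm\infty$, which uses the \emph{integrability} of $X$ and $Y$ (forcing $x(1-F(x)),xF(x)\to 0$ at the respective infinities, enough to absorb the linear growth of $\psi$). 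All remaining manipulations are routine Fubini/coupling arguments.
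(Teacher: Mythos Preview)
The paper does not supply its own proof of this proposition: it is quoted with the reference ``See \cite{GS}'' and no argument is given in the text. There is therefore nothing to compare your proposal against on the paper's side.

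Your argument is correct and self-contained. The sandwich for (1)--(2) via the comonotone coupling and the pointwise identity $|a-b|=\int_{\R}|\mathbf 1_{a\le x}-\mathbf 1_{b\le x}|\,dx$ is the standard one-dimensional route, and the Fubini step $\int_0^1|F^{-1}-G^{-1}|\,dt=\int_{\R}|F-G|\,dx$ based on $F^{-1}(t)\le x\iff t\le F(x)$ is clean. For (3) the integration-by-parts argument is fine; your remark on the boundary terms is the only delicate point, and it is handled correctly: integrability of $X$ and $Y$ gives $x(1-F(x))\to 0$ and $xF(-x)\to 0$, which absorbs the at-most-linear growth of a $1$-Lipschitz $\psi$. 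The extremiser $\psi(x)=\int_0^x\operatorname{sgn}(G-F)\,dy$ indeed attains $\int_{\R}|F-G|\,dx$. One cosmetic point: in the duality you may wish to note that adding a constant to $\psi$ leaves $\int\psi\,d(\mu-\nu)$ unchanged, so the unboundedness of $\psi$ at $\pm\infty$ is harmless once the boundary terms are controlled.
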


\subsection{Wasserstein, Zolotarev and Toscani-Fourier distances}

Let $\mu$, $\nu$ be two probability measures on $\R$ endowed with the distance $d(x,y)=|x-y|$, $x,y\in\R$.
 Writing $p>0$ as $p=m+\alpha$ with $m\in\N_0$  and $0<\alpha\leq 1$, denote by $\F_p$ the H\"older class of
 real-valued bounded functions $f$ on $\R$ which are $m$-times differentiable with
 $$\big|f^{(m)}(x)-f^{(m)}(y)\big|\leq |x-y|^\alpha.$$
\begin{definition}
  The \emph{Zolotarev distance} $Z_p$ between $\mu$ and $\nu$ is defined by
 \begin{equation*}
Z_p(\mu,\nu)=\sup_{f\in\F_p}\bigg(\int_\R fd\mu-\int_\R fd\nu\bigg).
 \end{equation*}
\end{definition}
\begin{remark}
 It is easy to see that the functional $Z_p$ is a metric. For $p=0$ the metric $Z_p$ is defined by the relation $Z_0=\lim_{p\to0}Z_p$ and
 $\F_0$ is the set of Borel functions satisfying the condition $|f(x)-f(y)|\leq \I_{x\neq y}$. Thanks to the characterisation of the total variation
 given in Property \ref{property:tv} below, it follows that $Z_0(\mu,\nu)=\|\mu-\nu\|_{TV}$. Also, by means of the Kantorovich-Rubinstein formula, recalled in Property \ref{pro:lip}, we have
 $Z_1(\mu,\nu)=\Wi_1(\mu,\nu).$
\end{remark}

The following result shows that the Wasserstein distance of order $p$ is bounded by the $p$-th root of the Zolotarev distance $Z_p$. This fact, together with Theorem \ref{teo:zolotarevCPP} below, will be a useful tool to control the Wasserstein distances between the increments of compound Poisson processes.

\begin{theorem}[See \cite{Rio09}, Theorem 3.1]\label{teo:Rio09}
 For any $p\geq 1$ there exists a positive constant $c_p$ such that for any pair $(\mu,\nu)$ of laws on the real line with finite absolute moments
 of order $p$
 $$\big(\Wi_p(\mu,\nu)\big)^p\leq c_pZ_p(\mu,\nu).$$
\end{theorem}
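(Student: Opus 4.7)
The plan is to exploit the explicit one-dimensional quantile representation of $\Wi_p$ together with the construction of a test function lying, up to a universal constant, in the Zolotarev class $\F_p$. Write $p=m+\alpha$ with integer $m\geq 0$ and $\alpha\in(0,1]$; the case $p=1$ (so $m=0$, $\alpha=1$) is exactly the Kantorovich--Rubinstein identity in Proposition \ref{pro:lip}, giving $c_1=1$, so I focus on $p>1$, for which $m\geq 1$.

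The first step is to fix the coupling. Since we work on $\R$, the optimal $\Wi_p$-coupling is the monotone rearrangement, and with $F,G$ denoting the c.d.f.s of $\mu,\nu$ respectively,
\begin{equation*}
\Wi_p(\mu,\nu)^p = \int_0^1 |F^{-1}(u)-G^{-1}(u)|^p\,du.
\end{equation*}
This converts the variational Wasserstein problem into a concrete analytic expression built from $F$ and $G$.

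The second and crucial step is to construct a test function $f=f_{\mu,\nu}$ that is bounded, $m$-times differentiable with $|f^{(m)}(x)-f^{(m)}(y)|\leq |x-y|^\alpha$, so that $f\in\F_p$ (with the Hölder constant absorbed into $c_p$), and such that $\int_\R f\,d(\mu-\nu)\geq c_p^{-1}\Wi_p(\mu,\nu)^p$. Following Rio, the natural prescription is to pick $f$ so that one of its derivatives (up to the $m$-th) encodes the signed difference $F-G$ in the form $|F-G|^\beta\,\mathrm{sign}(F-G)$, multiplied by a cutoff $\chi$ enforcing boundedness. Because $F$ and $G$ are $1$-Lipschitz, the regularity of $f^{(m)}$ comes out $\alpha$-Hölder automatically after the required integrations. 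Plugging $f$ into the definition of $Z_p$ and using iterated integration by parts (Fubini applied to integrals involving $|F(x)-G(x)|^p$), the pairing $\int f\,d(\mu-\nu)$ is identified, up to a constant depending only on $p$, with $\int_0^1|F^{-1}(u)-G^{-1}(u)|^p\,du=\Wi_p(\mu,\nu)^p$.

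The main obstacle is controlling the cutoff $\chi$ while preserving the match: $f$ must be bounded (mandatory for $f\in\F_p$) yet still large enough for the pairing to dominate a fixed fraction of $\Wi_p(\mu,\nu)^p$. This is where the finite $p$-th moment hypothesis enters, providing enough tail decay of $F$ and $1-G$ so that a truncation on a sufficiently large interval removes only a negligible share of $\int|F-G|^p\,dx$. For $p\in(1,2]$ ($m=1$) the construction is elementary, with $f'(x)\propto|F(x)-G(x)|^{p-1}\mathrm{sign}(G(x)-F(x))\cdot\chi(x)$; for $p>2$ one iterates the construction and tracks the compounding Hölder constants through each integration, which is what fixes the final value of $c_p$.
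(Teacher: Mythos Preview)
First, note that the paper does not supply its own proof of this statement; it is quoted from Rio \cite{Rio09} without argument. So there is nothing in the paper to compare your sketch against, and I assess it on its own merits. The case $p=1$ is indeed immediate from Kantorovich--Rubinstein; the issues are with your treatment of $p>1$.

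\textbf{Regularity.} You write ``Because $F$ and $G$ are $1$-Lipschitz, the regularity of $f^{(m)}$ comes out $\alpha$-H\"older automatically.'' But cumulative distribution functions are merely non-decreasing and bounded by $1$; they are not Lipschitz in general (any law with an atom, or with an unbounded density, gives a counterexample). The map $t\mapsto|t|^{p-1}\operatorname{sign}(t)$ is $(p-1)$-H\"older on $[-1,1]$, but composing it with $F-G$ yields a $(p-1)$-H\"older function of $x$ only if $F-G$ is Lipschitz in $x$, which you cannot assume. So your candidate $f'$ need not lie in the right class.

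\textbf{Wrong pairing.} Even granting membership in $\F_p$, a single integration by parts with your choice $f'(x)\propto|F(x)-G(x)|^{p-1}\operatorname{sign}(G(x)-F(x))$ gives
\[
\int_\R f\,d(\mu-\nu)=-\int_\R f'(x)\big(F(x)-G(x)\big)\,dx\ \propto\ \int_\R |F(x)-G(x)|^{p}\,dx,
\]
whereas $\Wi_p(\mu,\nu)^p=\int_0^1|F^{-1}(u)-G^{-1}(u)|^{p}\,du$. For $p>1$ these two quantities are \emph{not} comparable by any universal constant: under the dilation $x\mapsto\lambda x$ the quantile integral scales like $\lambda^{p}$ while $\int_\R|F-G|^{p}\,dx$ scales like $\lambda$. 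Concretely, for $\mu=\tfrac12(\delta_{-a}+\delta_a)$ and $\nu=\delta_0$ (matching means, so the example is relevant for $m=1$) one computes $\Wi_p^p=a^{p}$ but $\int_\R|F-G|^{p}\,dx=2^{1-p}a$, and the ratio blows up as $a\to\infty$. No amount of ``iterated integration by parts'' or Fubini converts $\int_\R|F-G|^{p}\,dx$ into the quantile integral; they are genuinely different functionals with different homogeneity. Rio's actual argument does not build the test function from $|F-G|^{p-1}$; the construction works on the quantile side and is considerably more delicate than the one-line integration by parts you indicate.
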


\begin{theorem}\label{teo:zolotarevCPP}[See \cite{zolbook}, Theorem 1.4.3]
  Let $(X_i)_{i\geq 1}$ and $(Y_i)_{i\geq 1}$ be sequences of independent random variables and $N$ be an integer-valued random variable independent of
  the random variables from both sequences. Then,
  \begin{equation*}
   Z_p\bigg(\sum_{i=1}^N X_i, \sum_{i=1}^N Y_i\bigg)\leq \sum_{k=1}^{\infty}\p(N\geq k)Z_p(X_k,Y_k).
  \end{equation*}
 \end{theorem}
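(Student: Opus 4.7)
The plan is to mirror the proof of subadditivity for the Wasserstein distance (Corollary \ref{cor:subadd}) and then condition on $N$. The key observation is that $Z_p$ is an ideal metric in the same sense as $\Wi_p$: (a) it is invariant under independent convolution, $Z_p(\mu * \rho, \nu * \rho) \leq Z_p(\mu,\nu)$, and (b) it is subadditive on convolutions, $Z_p(\mu_1 * \cdots * \mu_n, \nu_1 * \cdots * \nu_n) \leq \sum_{i=1}^n Z_p(\mu_i,\nu_i)$. Granted these two facts, the result follows by conditioning on $N$, applying (b) term by term, and exchanging the order of the resulting double series.

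First I would verify (a) directly from the definition. Writing the difference of expectations as an integral against $\rho$ and using Fubini, one has
\begin{align*}
 \int f\, d(\mu*\rho) - \int f\, d(\nu*\rho) = \int \Big(\int f(x+z)\mu(dx) - \int f(x+z)\nu(dx)\Big) \rho(dz).
\end{align*}
Since $f(\cdot + z)\in\F_p$ whenever $f\in\F_p$ (translation preserves both the uniform bound and the H\"older seminorm of the $m$-th derivative), the inner bracket is bounded by $Z_p(\mu,\nu)$ uniformly in $z$, yielding (a) after taking the supremum over $f$. Property (b) then follows from (a) plus the triangle inequality: insert the intermediate convolution $\nu_1 * \mu_2 * \cdots * \mu_n$, bound the first piece by $Z_p(\mu_1,\nu_1)$ via (a) applied with $\rho = \mu_2 * \cdots * \mu_n$, and iterate.

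With these tools in hand, the main calculation is short. Conditioning on $N$, which is independent of both sequences, and using the elementary fact $\sup_f \sum_n c_n a_n(f) \leq \sum_n c_n \sup_f a_n(f)$ for nonnegative weights $c_n$,
\begin{align*}
 Z_p\Big(\sum_{i=1}^N X_i, \sum_{i=1}^N Y_i\Big)
 &= \sup_{f\in\F_p} \sum_{n=1}^\infty \p(N=n)\Big(\E f\Big(\sum_{i=1}^n X_i\Big) - \E f\Big(\sum_{i=1}^n Y_i\Big)\Big)\\
 &\leq \sum_{n=1}^\infty \p(N=n)\, Z_p\Big(\sum_{i=1}^n X_i, \sum_{i=1}^n Y_i\Big)\\
 &\leq \sum_{n=1}^\infty \p(N=n) \sum_{i=1}^n Z_p(X_i,Y_i).
\end{align*}
Swapping the order of summation via Tonelli rewrites this last quantity as $\sum_{i=1}^\infty Z_p(X_i,Y_i) \sum_{n\geq i}\p(N=n) = \sum_{i=1}^\infty \p(N\geq i)\, Z_p(X_i,Y_i)$, which is the claim.

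The only genuine obstacle is to establish properties (a) and (b) rigorously: unlike $\Wi_p$, the distance $Z_p$ does not admit a coupling characterisation, so one must argue purely at the level of test functions and carefully verify that $\F_p$ is stable under translation. Once this is settled, no extra integrability assumptions on $N$ or on the $X_i,Y_i$ are needed, since either the right-hand side is finite and dominates the left-hand side or it is $+\infty$ and the inequality is vacuous.
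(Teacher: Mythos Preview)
The paper does not prove this theorem at all; it simply quotes it from Zolotarev's book (Theorem 1.4.3 in \cite{zolbook}). There is therefore no ``paper's own proof'' to compare against.

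Your argument is correct and is essentially the standard one. The verification that $\F_p$ is closed under translation is the right way to get the regularity property (a) without a coupling representation, and the telescoping argument for (b) is exactly how ideal metrics acquire subadditivity. The conditioning step and the Tonelli swap are unproblematic because every $f\in\F_p$ is bounded, so all the expectations involved are finite and the interchange $\sup_f\sum_n c_n a_n(f)\le \sum_n c_n\sup_f a_n(f)$ is justified for the nonnegative weights $c_n=\p(N=n)$. One cosmetic point: the sum over $n$ should start at $n=0$ (or you should remark that the $n=0$ term contributes $f(0)-f(0)=0$), since $N$ is only assumed integer-valued.
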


\begin{theorem}[See \cite{zolbook}, Theorem 1.4.2.]
 Let $X$ and $Y$ be integrable real random variables with laws $\mu$ and $\nu$, respectively. Then the following characterization of the Zolotarev distance holds: for any $p\geq 1$
 \begin{equation*}
  Z_p(X,Y)=\int \bigg|\int_{-\infty}^x \frac{(x-u)^{p-1}}{\Gamma(p)}(\mu-\nu)(du)\bigg|dx,
 \end{equation*}
 where $\Gamma$ denotes the Gamma function.
\end{theorem}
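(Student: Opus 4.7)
I plan to prove the identity $Z_p(\mu,\nu)=\|H_p\|_{L^1}$, with $H_p(x):=\int_{-\infty}^x \frac{(x-u)^{p-1}}{\Gamma(p)}(\mu-\nu)(du)$, by representing the Zolotarev pairing $\int f\,d(\mu-\nu)$ as an integral against $H_p$, via $m$ ordinary integrations by parts followed by one fractional integration by parts of order $\alpha$, and then optimising over $f\in\F_p$. Throughout, let $\rho:=\mu-\nu$ (so $\rho(\R)=0$), write $p=m+\alpha$ with $m\in\N_0$ and $\alpha\in(0,1]$, and set $H_k(x):=\int_{-\infty}^x\frac{(x-u)^{k-1}}{\Gamma(k)}\rho(du)$ for integer $k\geq 1$, noting $H_k'=H_{k-1}$.

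\emph{Step 1 (ordinary integration by parts).} For $f\in\F_p$, iterating integration by parts $m$ times yields
\begin{equation*}
\int f\,d\rho=(-1)^m\int f^{(m)}(x)\,H_m(x)\,dx,
\end{equation*}
the boundary contributions $[f^{(k)}H_{k+1}]_{-\infty}^{+\infty}$ vanishing because the polynomial growth of $f^{(k)}$ permitted by the Hölder bound is compatible with the decay of $H_{k+1}$ at $\pm\infty$, which follows from $\rho(\R)=0$ together with moment-matching conditions implicit in the finiteness of $\|H_p\|_{L^1}$ (if the right-hand side is infinite, the inequality $Z_p\geq\|H_p\|_{L^1}$ is obtained directly from the Step 3 approximating sequence).

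\emph{Step 2 (fractional integration by parts and upper bound).} The Riemann--Liouville semigroup identity reads $H_p=I_\alpha H_m$ with $I_\alpha\phi(x):=\int_{-\infty}^x\frac{(x-u)^{\alpha-1}}{\Gamma(\alpha)}\phi(u)\,du$. A Fubini computation, justified after suitable mollification of $f^{(m)}$, then gives
\begin{equation*}
\int f^{(m)}(x)\,H_m(x)\,dx=\int \mathcal{D}^\alpha_- f^{(m)}(x)\cdot H_p(x)\,dx,
\end{equation*}
where $\mathcal{D}^\alpha_-$ is the right-sided Marchaud fractional derivative (reducing to $-d/dx$ when $\alpha=1$). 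The key structural point is that the Hölder condition $|f^{(m)}(x)-f^{(m)}(y)|\leq |x-y|^\alpha$ is equivalent (with the canonical normalisation) to $\|\mathcal{D}^\alpha_- f^{(m)}\|_{L^\infty}\leq 1$, so that $|\int f\,d\rho|\leq \|H_p\|_{L^1}$ and hence $Z_p(\mu,\nu)\leq\|H_p\|_{L^1}$.

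\emph{Step 3 (sharpness and main obstacle).} For sharpness, mollify $\operatorname{sgn}(H_p)$ on scale $\eta>0$ to obtain $s_\eta$ and set $g_\eta:=I_\alpha s_\eta$, which is $\alpha$-Hölder with constant approaching $1$. Take $f_\eta$ to be an $m$-fold antiderivative of $g_\eta$, adding compactly supported $\alpha$-Hölder corrections so that $\int x^k g_\eta(x)\,dx=0$ for $k=0,\ldots,m-1$; these vanishing-moment conditions force all iterated primitives, and hence $f_\eta$ itself, to be bounded, giving $f_\eta\in\F_p$. Running Steps 1--2 in reverse shows $\int f_\eta\,d\rho\to\int|H_p(x)|\,dx$ as $\eta\to 0$. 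The main obstacle is Step 2: the Marchaud derivative of a bounded $\alpha$-Hölder function without decay is not classically convergent, so the fractional integration by parts must be established via approximation with controlled errors; a clean alternative is to avoid $\mathcal{D}^\alpha_-$ entirely and verify the identity at the Fubini level on the double integral over $\rho\otimes dx$, applying the Hölder bound on $f^{(m)}$ directly inside the integrand. Step 3 is likewise delicate, since iterated antiderivatives of compactly supported functions are typically unbounded, so the vanishing-moment corrections require careful book-keeping to keep the error in $\int f_\eta\,d\rho-\|H_p\|_{L^1}$ controlled as $\eta\to 0$.
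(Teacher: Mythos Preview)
The paper does not prove this statement at all: it is quoted verbatim as Theorem~1.4.2 of Zolotarev's monograph \cite{zolbook} and used as a black box, so there is no ``paper's own proof'' to compare against.

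On the merits of your sketch: the strategy of reaching $H_p$ by $m$ integrations by parts followed by a fractional step of order $\alpha$ is the natural one, and for integer $p$ (i.e.\ $\alpha=1$) Steps~1 and~3 go through cleanly. For non-integer $p$, however, the assertion in Step~2 that the H\"older bound $|f^{(m)}(x)-f^{(m)}(y)|\le|x-y|^\alpha$ is \emph{equivalent, with the canonical normalisation}, to $\|\mathcal D^\alpha_- f^{(m)}\|_{L^\infty}\le 1$ is not correct as stated: the Marchaud derivative of a generic bounded $\alpha$-H\"older function need not exist pointwise (the defining integral diverges logarithmically at the origin for functions saturating the H\"older bound), and even under regularisation the map from H\"older seminorm~$1$ to $L^\infty$-norm of the fractional derivative is not an isometry but only a bounded bijection with $\alpha$-dependent constants. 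You flag this yourself, but the proposed fix --- ``verify the identity at the Fubini level on the double integral over $\rho\otimes dx$'' --- faces the same obstruction, since the kernel $(x-u)^{p-1}$ is not integrable in $x$ against a merely bounded test function. The route that actually closes (and is essentially what Zolotarev does) is to work instead with the Taylor remainder representation: for $f\in\F_p$ and any anchor point one writes $f(u)-\sum_{k\le m}\frac{f^{(k)}(x)}{k!}(u-x)^k$ and exploits that the polynomial part integrates to zero against $\rho$ once enough moments of $\mu$ and $\nu$ match (which is forced by finiteness of $Z_p$); the residual is then controlled directly by the H\"older condition without ever invoking a fractional derivative. Your Step~3 construction also needs more than ``compactly supported $\alpha$-H\"older corrections'': the moment constraints you need can be arranged, but the resulting $f_\eta$ will in general not be bounded unless you additionally truncate, and truncation costs in the H\"older seminorm --- this needs a genuine two-parameter limiting argument rather than a single $\eta\to 0$.
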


Let $P_1$ and $P_2$ be two probability measures on the real line. We will denote by $\varphi_1$ (resp. $\varphi_2$) the characteristic function of $P_1$ (resp. $P_2$), i.e.
$$\varphi_1(u)=\int_{\R} e^{iux}P_1(dx).$$
Also, denote by $\Li^b(\R)$ (resp. $\Li^b(\C)$) the class of real-valued (resp. complex-valued) bounded functions on $\R$ with Lipschitz norm bounded by $1$.
\begin{definition}\label{def:toscani}
For  $s>0$, the \emph{Toscani-Fourier distance of order} $s$, denoted by $T_s$, is defined as:
$$T_s(P_1,P_2)=\sup_{u\in \R\setminus\{0\}}\frac{|\varphi_1(u)-\varphi_2(u)|}{|u|^s}.$$
\end{definition}
The distance introduced in Definition \ref{def:toscani} first appeared in \cite{GT}, under the name ``Fourier-based metrics'', to study the trend to equilibrium for solutions of the space-homogeneous Boltzmann equation for Maxwellian molecules. After that, it has been used in several other works, and especially linked to the kinetic theory, see \cite{CT07} for an overview. In \cite{villani09}, $T_2$ is called the ``Toscani distance''.

\begin{proposition}\label{prop:toscani}
 For all $p\geq 1$
 $$\Wi_p(P_1,P_2)\geq \frac{1}{\sqrt 2}T_1(P_1,P_2).$$
\end{proposition}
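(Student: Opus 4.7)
The plan is to reduce everything to the case $p=1$ via monotonicity and then exploit the Kantorovich–Rubinstein dual formulation with trigonometric test functions.

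First, by Lemma \ref{lemma:w1}(2), $\Wi_p(P_1,P_2)\geq \Wi_1(P_1,P_2)$ for every $p\geq 1$, so it suffices to establish the bound $\Wi_1(P_1,P_2)\geq \frac{1}{\sqrt 2}T_1(P_1,P_2)$. For this I would invoke the Kantorovich–Rubinstein formula (Proposition \ref{pro:lip}, item 3), which expresses $\Wi_1$ as a supremum over $1$-Lipschitz test functions $\psi\colon\R\to\R$. For any fixed $u\in\R\setminus\{0\}$, the functions $x\mapsto \cos(ux)/|u|$ and $x\mapsto \sin(ux)/|u|$ are both real-valued and $1$-Lipschitz, so plugging them in gives
\begin{equation*}
\Wi_1(P_1,P_2)\geq \frac{|\mathrm{Re}(\varphi_1(u)-\varphi_2(u))|}{|u|}\quad\text{and}\quad \Wi_1(P_1,P_2)\geq \frac{|\mathrm{Im}(\varphi_1(u)-\varphi_2(u))|}{|u|}.
\end{equation*}

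Next I combine the two bounds. Using the elementary inequality $\sqrt{a^2+b^2}\leq \sqrt 2\,\max(|a|,|b|)$, I get
\begin{equation*}
|\varphi_1(u)-\varphi_2(u)|\leq \sqrt 2\,\max\bigl(|\mathrm{Re}(\varphi_1(u)-\varphi_2(u))|,|\mathrm{Im}(\varphi_1(u)-\varphi_2(u))|\bigr)\leq \sqrt 2\,|u|\,\Wi_1(P_1,P_2).
\end{equation*}
Dividing by $|u|$ and taking the supremum over $u\in\R\setminus\{0\}$ yields $T_1(P_1,P_2)\leq \sqrt 2\,\Wi_1(P_1,P_2)$, which is the claim.

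There is no real obstacle here: the only subtlety is making sure one uses real-valued test functions in the Kantorovich–Rubinstein formula (so one must split into real and imaginary parts of the characteristic functions), and keeping track of the constant $\sqrt 2$ coming from the bound $\sqrt{a^2+b^2}\leq \sqrt 2\,\max(|a|,|b|)$. If instead one used the complex-valued test function $x\mapsto e^{iux}/|u|$ directly, one would recover the cleaner but weaker bound $\Wi_1\geq T_1$ only up to a factor, which is exactly what the $\sqrt 2$ accounts for.
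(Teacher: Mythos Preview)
Your proof is correct and follows essentially the same route as the paper: reduce to $p=1$ via monotonicity, invoke Kantorovich--Rubinstein duality, and test against the Lipschitz functions built from $e^{iux}/|u|$. The only cosmetic difference is that the paper first passes from real- to complex-valued $1$-Lipschitz test functions (picking up the $\sqrt 2$) and then plugs in $\Psi_u(x)=e^{iux}/u$, whereas you work directly with $\cos(ux)/|u|$ and $\sin(ux)/|u|$ and recombine via $\sqrt{a^2+b^2}\le\sqrt 2\max(|a|,|b|)$; the two arguments are the same computation unpacked differently.
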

\begin{proof}
 Thanks to Lemma \ref{lemma:w1} and Property \ref{pro:lip}
 \begin{align*}
   \Wi_p(P_1,P_2)&\geq \Wi_1(P_1,P_2)=\sup_{\psi \in \Li^b(\R)}\bigg(\int_\R\psi dP_1-\int_\R\psi dP_2\bigg)\\
                 &\geq \frac{1}{\sqrt 2}\sup_{\psi\in \Li^b(\C)}\bigg|\int_\R\psi dP_1-\int_\R\psi dP_2\bigg|.
 \end{align*}
For all $u\in \R\setminus\{0\}$, let us consider the function $\Psi_u(x)=\frac{e^{iux}}{u}$ and observe that the Lipschitz norm of $\Psi_u$ is $1$. It immediately follows that
$$\sup_{\psi\in \Li^b(\C)}\bigg|\int_\R\psi dP_1-\int_\R\psi dP_2\bigg|\geq \sup_{u\in\R\setminus\{0\}}\bigg|\int_\R\Psi_u dP_1-\int_\R\Psi_u dP_2\bigg|= T_1(P_1,P_2).$$
\end{proof}

\subsection{Wasserstein distances in the central limit theorem}\label{CLT}
The class of Wasserstein metrics proves to be very useful  in  estimating  the convergence rate in the central limit theorem. We recall some results.
Let $(Y_i)_{i\geq 1}$ be a sequence of centred i.i.d. random variables with finite and positive variance $\sigma^2$. We denote by
$\mu_n$ the law of $\frac{1}{\sqrt{n\sigma^2}}\sum_{i=1}^n Y_i$. 
For i.i.d. centred random variables with finite absolute third moment, Esseen \cite{esseen} proved the following result.
\begin{theorem}\label{teo:zolotarev}[See e.g. \cite{petrov}, Theorem 16]
For any $n\geq 1$,
 $$\Wi_1\big(\mu_n,\No(0,1)\big)\leq \frac{1}{2\sqrt n} \frac{\E|Y_1|^3}{\big(\var[Y_1]\big)^{3/2}}.$$
The constant $\frac{1}{2}$ in this inequality cannot
be improved.
 \end{theorem}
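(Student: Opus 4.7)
The plan is to invoke the cumulative-distribution-function characterization of $\Wi_1$ from Proposition \ref{pro:lip}(1),
$$\Wi_1(\mu_n,\No(0,1)) \;=\; \int_\R |F_n(x) - \Phi(x)|\,dx,$$
where $F_n$ is the CDF of $\mu_n$ and $\Phi$ that of $\No(0,1)$, and to reduce to the case $\var(Y_1)=1$ via the homogeneity identity \eqref{eq:hom} in Lemma \ref{lemma:proprieta}, so that the target inequality becomes $\Wi_1(\mu_n,\No(0,1))\leq \E|Y_1|^3/(2\sqrt n)$.

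The heart of the argument is a Fourier-analytic estimate in the spirit of Esseen's smoothing lemma. Since $\mu_n$ and $\No(0,1)$ share their first two moments, one has $\int_\R(F_n-\Phi)\,dx=0$ together with the inversion formula
$$F_n(x) - \Phi(x) \;=\; -\frac{1}{2\pi}\,\text{p.v.}\int_\R \frac{\varphi_n(u)-e^{-u^2/2}}{iu}\,e^{-iux}\,du,$$
so that $\|F_n-\Phi\|_{L^1}$ is dominated, up to a suitably chosen smoothing kernel, by a Fourier integral of $|\varphi_n(u)-e^{-u^2/2}|/u^2$. The third-absolute-moment hypothesis enters via the Taylor expansion
$$\log\varphi_{Y_1}(t) \;=\; -\tfrac{t^2}{2} + \rho(t), \qquad |\rho(t)|\leq \tfrac{|t|^3}{6}\E|Y_1|^3,$$
which, after rewriting $\varphi_n(u) = \exp\bigl(n\log\varphi_{Y_1}(u/\sqrt n)\bigr)$, delivers the pointwise estimate
$$|\varphi_n(u)-e^{-u^2/2}| \;\lesssim\; \frac{|u|^3\,\E|Y_1|^3}{\sqrt n}\,e^{-u^2/4}$$
on a range $|u|\leq \delta\sqrt n$; the complementary high-frequency tail is controlled by the sub-Gaussian estimate $|\varphi_{Y_1}(u/\sqrt n)|^n\leq e^{-c u^2}$ combined with a direct tail bound beyond $\delta\sqrt n$.

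The main obstacle is not the qualitative $O(1/\sqrt n)$ rate, which follows from routine Berry--Esseen bookkeeping, but the extraction of the sharp leading constant $\tfrac12$: this requires a delicate joint optimization of the smoothing bandwidth and a tight handling of the Taylor remainder, and is the technical core of Esseen's refinement of his own theorem. Sharpness of $\tfrac12$ is then verified by computing the Wasserstein distance between the law of a suitably centered and normalized sum of symmetric Bernoulli variables and $\No(0,1)$ and letting $n\to\infty$, in which limit the ratio of the two sides of the inequality tends to $1$.
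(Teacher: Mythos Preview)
The paper does not prove this statement: Theorem \ref{teo:zolotarev} is merely quoted from the literature (Esseen \cite{esseen}, with the reference to Petrov \cite{petrov}) as background in Section \ref{CLT}, and no proof is given or sketched. So there is no ``paper's own proof'' to compare your proposal against.

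That said, your outline is broadly the classical route to Esseen's mean-CLT bound: the $L^1$ characterisation of $\Wi_1$ via distribution functions (Proposition \ref{pro:lip}(1)), reduction to unit variance, and a Fourier/smoothing argument exploiting the third-moment Taylor bound on $\log\varphi_{Y_1}$. Two cautionary remarks. First, the step ``$\|F_n-\Phi\|_{L^1}$ is dominated, up to a suitably chosen smoothing kernel, by a Fourier integral of $|\varphi_n(u)-e^{-u^2/2}|/u^2$'' hides the genuinely hard part: the usual Esseen smoothing lemma controls $\|F_n-\Phi\|_\infty$, not $\|F_n-\Phi\|_{L^1}$, and passing to the $L^1$ norm with the sharp constant $\tfrac12$ requires a different and more delicate argument (this is precisely what Esseen's 1958 paper \cite{esseen} accomplishes). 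As written, your sketch would readily yield a bound of the form $C\E|Y_1|^3/\sqrt n$ for some absolute $C$, but not $C=\tfrac12$. Second, the sharpness claim is not witnessed by symmetric Bernoulli variables: for those the ratio of the two sides tends to a value strictly below $1$. The extremal sequence in Esseen's analysis is asymmetric two-point distributions, carefully tuned so that the lattice correction in the Edgeworth-type expansion of $\int|F_n-\Phi|$ saturates the bound.
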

A bound for the Wasserstein distances of order $r\in(1,2]$ is due to Rio \cite{Rio09}:
\begin{theorem}\label{teo:rio}[See \cite{Rio09}, Theorem 4.1]
 For any $n\geq 1$ and any $r\in(1,2]$, there exists some positive constant $C$ depending only on $r$ such that
 $$\Wi_r(\mu_n,\No(0,1))\leq C \frac{\Big(\E\big[|Y_1|^{r+2}\big]\Big)^{1/r}}{\sqrt n\big(\var[Y_1]\big)^{\frac{r+2}{2r}}} .$$
\end{theorem}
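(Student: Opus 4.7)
My plan would be to normalize and then reduce to a Zolotarev bound via Theorem~\ref{teo:Rio09}. By the scaling property $\Wi_r(cX,cY)=|c|\Wi_r(X,Y)$ of Lemma~\ref{lemma:proprieta}, I would replace $Y_i$ by $Y_i/\sigma$ to assume $\var[Y_1]=1$. The claim then reduces to $\Wi_r(\mu_n,\No(0,1))\leq C n^{-1/2}(\E|Y_1|^{r+2})^{1/r}$. Applying Theorem~\ref{teo:Rio09} gives $\Wi_r(\mu_n,\No(0,1))^r\leq c_r Z_r(\mu_n,\No(0,1))$, so it suffices to establish $Z_r(\mu_n,\No(0,1))\leq C_r\, n^{-r/2}\,\E|Y_1|^{r+2}$.

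The core argument is a Lindeberg swap. Let $(Z_i)$ be i.i.d.\ $\No(0,1)$ independent of $(Y_i)$, and introduce the hybrid sums $W_k:=(Y_1+\cdots+Y_{k-1}+Z_{k+1}+\cdots+Z_n)/\sqrt n$, which are independent of $(Y_k,Z_k)$. A telescoping decomposition yields
\[
Z_r(\mu_n,\No(0,1))\leq\sum_{k=1}^n\sup_{f\in\F_r}\E\bigl[f(W_k+Y_k/\sqrt n)-f(W_k+Z_k/\sqrt n)\bigr].
\]
A direct application of the ideal property of $Z_r$ would bound each term by $n^{-r/2}Z_r(Y_k,Z_k)$, producing only $n^{1-r/2}$---divergent for $r<2$ and insufficient for $r=2$. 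To gain the missing $n^{-1}$ factor, I would exploit that $Y_k$ and $Z_k$ have matching first two moments and that $W_k$ has a smooth density $g_k$. Writing $\E[f(W_k+x)]=\int f(w)g_k(w-x)\,dw$ and Taylor-expanding $g_k(w-x)$ to third order in $x$, the zeroth-, first-, and second-order coefficients cancel after taking expectation over $Y_k$ versus $Z_k$ (using $\E Y_k=\E Z_k=0$ and $\E Y_k^2=\E Z_k^2=1$), leaving a remainder of order $n^{-(r+2)/2}\E|Y_1|^{r+2}$ controlled by the $(r-1)$-Hölder regularity of $g_k^{(2)}$. Summing over $k$ yields the required $n^{-r/2}\E|Y_1|^{r+2}$ bound for $Z_r$.

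The main obstacle is the uniform smoothness control of $g_k$: for $k$ near $1$ or $n$, the hybrid sum $W_k$ contains few Gaussian summands, so smoothness is not automatic, and one must verify that $\|g_k^{(j)}\|_{L^1}$ remains bounded uniformly in $k$ and $n$ for $j\leq 3$. I would handle this by combining an Edgeworth-type comparison of $g_k$ with the standard normal density for moderate $k$ and a truncation/smoothing argument---splitting off a thin Gaussian piece from the Gaussian summands of $W_k$---for extremal $k$. Alternatively, one can work entirely on the Fourier side, using the Zolotarev representation $Z_r(\mu,\nu)=\int|K_r(\mu-\nu)(x)|\,dx$ from the excerpt to reduce the problem to pointwise estimates of the normalized characteristic-function difference $(\phi_{\mu_n}(\xi)-e^{-\xi^2/2})/(i\xi)^r$ via a Taylor expansion of $\log\phi_Y$ around zero to order $r+2$.
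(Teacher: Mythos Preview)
The paper does not prove Theorem~\ref{teo:rio} at all: it is a cited result from \cite{Rio09} (Theorem~4.1 there), recalled in Section~\ref{CLT} alongside the Esseen and Sakhanenko bounds purely as a tool to be applied later in the proof of Theorem~\ref{teo:smalljumps}. There is therefore no proof in the present paper to compare your proposal against.

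For what it is worth, your sketch is broadly faithful to Rio's actual argument in \cite{Rio09}: the reduction to the Zolotarev metric via his Theorem~3.1 (Theorem~\ref{teo:Rio09} here), followed by a Lindeberg-type telescoping with smoothing to extract the extra factor of $n^{-1}$ from the matching of the first two moments. Your identification of the main technical obstacle---uniform control of the smoothness of the hybrid density $g_k$, in particular near the endpoints where few Gaussian summands are present---is accurate, and Rio handles it essentially by the ``split off a thin Gaussian piece'' idea you mention. The Fourier-side alternative you suggest at the end is a different route and is not what Rio does; the integral representation of $Z_r$ you invoke would require additional work to turn into a sharp $n^{-r/2}$ bound.
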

For $r>2$ and i.i.d. random variables with a finite absolute moment of order $r$, we have the following:
\begin{theorem}\label{teo:sak}[See \cite{sak}]
 For any $n\geq 1$ and  $r>2$, there exists some positive constant $C$, depending only on $r$, such that
 $$\Wi_r(\mu_n,\No(0,1))\leq C  \frac{\Big(\E\big[|Y_1|^{r}\big]\Big)^{1/r}}{\sqrt{\var[Y_1]}}n^{\frac{1}{r}-\frac{1}{2}}.$$
\end{theorem}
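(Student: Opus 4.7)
The plan is to reduce the bound on $\Wi_r(\mu_n,\No(0,1))$ to an estimate on the Zolotarev distance $Z_r$, for which the independent-sum structure of $\mu_n$ can be exploited. By Theorem \ref{teo:Rio09} one has $\Wi_r(\mu_n,\No(0,1))^r\leq c_r Z_r(\mu_n,\No(0,1))$, so it suffices to produce a sharp bound on the Zolotarev side.

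Write $\tilde Y_i=Y_i/\sqrt{n\var[Y_1]}$, so that $\mu_n=\Li(\sum_{i=1}^n\tilde Y_i)$, and introduce i.i.d.\ Gaussians $Z_i\sim\No(0,1/n)$ with $\sum_i Z_i\sim\No(0,1)$. Applying Theorem \ref{teo:zolotarevCPP} with the deterministic count $N=n$ yields
$$Z_r(\mu_n,\No(0,1))\leq\sum_{i=1}^n Z_r(\tilde Y_i,Z_i)=n\,Z_r(\tilde Y_1,Z_1).$$
Since $\tilde Y_1$ and $Z_1$ share mean and variance, writing $r=m+\alpha$ with $m=\lfloor r\rfloor\geq 2$ and $\alpha\in(0,1]$, one expands any $f\in\F_r$ by Taylor's formula at order $m$ around $0$, notes that the polynomial parts cancel after subtraction thanks to the moment matching, and bounds the H\"older remainder by $C|x|^r$. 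For $r\in(2,3]$ this directly gives
$$Z_r(\tilde Y_1,Z_1)\leq C_r\bigl(\E|\tilde Y_1|^r+\E|Z_1|^r\bigr)\leq C_r'\,\frac{\E|Y_1|^r}{(n\var[Y_1])^{r/2}}.$$
Summing and inserting into Theorem \ref{teo:Rio09}, then extracting the $r$-th root, yields
$$\Wi_r(\mu_n,\No(0,1))\leq c_r^{1/r}\Bigl(n\cdot C_r'\,\frac{\E|Y_1|^r}{(n\var[Y_1])^{r/2}}\Bigr)^{1/r}=C_r''\,n^{1/r-1/2}\,\frac{(\E|Y_1|^r)^{1/r}}{\sqrt{\var[Y_1]}},$$
which is the asserted inequality.

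The hard part is the range $r>3$, where $m\geq 3$ and the moments of orders $3,\dots,m$ of $\tilde Y_1$ and $Z_1$ no longer agree, so the moment-matching estimate for $Z_r(\tilde Y_1,Z_1)$ breaks. To cover all $r>2$ one may either (i) replace $Z_i$ at each step by a non-Gaussian auxiliary law whose first $m$ moments coincide with those of $\tilde Y_1$, control the remaining Wasserstein distance from that auxiliary law to $\No(0,1/n)$ via Rio's bound (Theorem \ref{teo:rio}) at a smaller order $r'\in(1,2]$, and combine via the triangle inequality, or (ii) follow Sakhanenko's original strategy and build a quantile coupling $(S_n,\sigma W_n)$ satisfying $\E|S_n-\sigma W_n|^r\leq C_r n\,\E|Y_1|^r$ by a strong-approximation argument, from which the bound follows directly after rescaling. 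Both routes are technically delicate, and it is precisely the passage to general $r>2$ that makes the full theorem non-trivial; everything else reduces to the algebra above.
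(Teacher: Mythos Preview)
The paper does not contain a proof of this theorem: it is quoted from Sakhanenko \cite{sak} and used as a black box, so there is nothing to compare your argument against on the paper's side.

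As for your argument itself: the part for $r\in(2,3]$ is sound. With $m=2$ and $\alpha=r-2\in(0,1]$, any $f\in\F_r$ admits a second-order Taylor expansion with remainder bounded by $\tfrac12|x|^r$; since $\tilde Y_1$ and $Z_1$ share mean and variance, the polynomial parts cancel and one indeed obtains $Z_r(\tilde Y_1,Z_1)\le C_r(\E|\tilde Y_1|^r+\E|Z_1|^r)$. The absorption of the Gaussian moment into the $\E|Y_1|^r$ term is justified by $(\var[Y_1])^{r/2}\le\E|Y_1|^r$ (H\"older, centred $Y_1$). Combined with Theorems \ref{teo:Rio09} and \ref{teo:zolotarevCPP}, this yields the stated bound in that range.

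For $r>3$, however, your write-up is not a proof but an outline of two possible strategies, neither of which you carry out. Route (i) as stated is problematic: an auxiliary law matching the first $m\ge 3$ moments of $\tilde Y_1$ will in general differ from $\No(0,1/n)$ by a quantity of order $n^{-3/2}$ or worse in any Wasserstein sense, and there is no clear mechanism to recover the exponent $n^{1/r-1/2}$ from the triangle inequality across $n$ summands. Route (ii) is precisely Sakhanenko's strong-approximation construction, which is the substantive content of the cited reference and is far from a routine application of the tools in this paper. So your proposal establishes the theorem only for $r\in(2,3]$; the full statement for all $r>2$ remains unproved in your text, as you yourself concede in the last sentence.
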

If one only assumes finite absolute moment of order $r$, this rate cannot be improved. In particular, under this assumption, the classical rate of convergence $\frac{1}{\sqrt n}$ cannot be recovered for $r>2$. For that reason, from now on, we will only focus on the case $r\in[1,2]$.

 \subsection{Lévy processes}\label{sec:notationlevy}

 Let us denote by $P_t^{(b,\sigma,\nu)}$ the marginal law at time $t\ge 0$ of a Lévy process $X$ with characteristics $(b,
\sigma^2,\nu)$, i.e. (see Theorem 8.1 in \cite{sato})
\begin{align*}
\E\big(e^{iuX_t}\big)&=\exp\bigg(t\bigg(iub-\frac{u^2\sigma^2}{2}+\int_\R\big(e^{iux}-1-iux\I_{|x|\leq 1}\big)
\nu(dx)\bigg)\bigg)\\
&=\exp\bigg(t\bigg(iub(\varepsilon)-\frac{u^2\sigma^2}{2}+\int_\R\big(e^{iux}-1-iux\I_{|x|\leq \varepsilon}\big)
\nu(dx)\bigg)\bigg),
\end{align*}
where $b(\varepsilon):=b-\int_{\varepsilon< |x|\leq 1}x\nu(dx)$, for all $\varepsilon\in(0,1]$. Equivalently,
$P_t^{(b,\sigma,\nu)}$ denotes the infinitely divisible law with characteristics $(bt,
\sigma^2t,\nu t)$.
$X$ can be characterised via the Lévy-Itô decomposition (see \cite{sato}), that is via a canonical representation with independent components
$X=X^{(1)}+X^{(2)}(\varepsilon)+X^{S}(\varepsilon)+X^{B}(\varepsilon)$: For all $\varepsilon\in(0,1]$
\begin{align*}
 X_t&=\sigma W_t+b(\varepsilon)t+ \lim_{\eta\to 0}\bigg(\sum_{0< s\leq t}\Delta X_s\I_{(\eta,\varepsilon]}(|\Delta X_s|)-t
 \big(b(\varepsilon)-b(\eta)\big)\bigg)\\
 &\quad +\sum_{0< s\leq t}\Delta X_s\I_{(\varepsilon,+\infty)}(|\Delta X_s|),
\end{align*}
where $W$ is a standard Brownian motion, $\Delta X_s:=X_s-\lim_{r\uparrow s}X_r$ is the jump at time $s$ of  $X$, $X^{S}(\varepsilon)$ is a pure jump martingale containing only small jumps and $X^{B}(\varepsilon)$ is a finite variation part containing jumps larger in absolute value than $\varepsilon$. Thus
$X^{B}(\varepsilon)$ is a compound Poisson process with intensity $\lambda_\varepsilon:=\nu(\R\setminus(-\varepsilon,\varepsilon)
)$ and jump distribution $F_{\varepsilon}(dx)=\frac{\nu(dx)}{\nu(\R\setminus(-\varepsilon,\varepsilon))}\I_{(\varepsilon,+\infty)}(|x|)$.
In the following, sometimes we will write $X^{B}_t(\varepsilon)$ as $\sum_{i=1}^{N_t}Y_i$ where  $N$ is a Poisson process of intensity $\lambda_\varepsilon$ independent of the sequence $(Y_i)_{i\geq 0}$ of i.i.d. random variables having distribution $F_\varepsilon.$
Also, for a given Lévy process $X$ we define an auxiliary characteristic $\bar \sigma:\R_+\to\R$ capturing the variance induced by small jumps:
\begin{equation*}
 \bar \sigma^2(\varepsilon):=\int_{|x|\leq\varepsilon}x^2\nu(dx).
\end{equation*}

 \section{Wasserstein distances for Lévy processes}\label{sec:uppb}
  Let $X^j$, $j=1,2$, be two Lévy processes with characteristics  $(b_j,\sigma_j^2,\nu_j) $, $j=1,2$. As we will see later, thanks to Corollary \ref{cor:subadd} and the Lévy-Itô decomposition, in order to control $\Wi_p(X_t^1,X_t^2)$ it is enough to separately control the Wasserstein distances between two Gaussian random variables as well as  $\Wi_p\big(\Li(X_{t}^{j,S}(\varepsilon)),\No\big(0,t\bar \sigma_j^2(\varepsilon)\big)\big)$ and $\Wi_p\big(X_{t}^{1,B}(\varepsilon), X_{t}^{2,B}(\varepsilon)\big)$. A bound for the Wasserstein distances between Gaussian distributions is given by:
 \begin{lemma}\label{lemma:gaussiane}[See \cite{Givens1984}, Prop. 7]
 $$\Wi_2(\No(m_1,\sigma_1^2),\No(m_2,\sigma_2^2))=\sqrt{(m_1-m_2)^2+(\sigma_1-\sigma_2)^2}.$$
\end{lemma}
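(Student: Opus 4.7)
The plan is to prove both a matching upper and lower bound on $\Wi_2(\No(m_1,\sigma_1^2),\No(m_2,\sigma_2^2))^2$, each equal to $(m_1-m_2)^2+(\sigma_1-\sigma_2)^2$, and then take square roots. Throughout, $\sigma_1,\sigma_2\ge 0$ are the (nonnegative) standard deviations.

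For the upper bound, I would exhibit an explicit coupling on a common probability space. Let $Z\sim\No(0,1)$ and set $X:=m_1+\sigma_1 Z$, $Y:=m_2+\sigma_2 Z$, so that $\Li(X)=\No(m_1,\sigma_1^2)$ and $\Li(Y)=\No(m_2,\sigma_2^2)$. Then
\begin{equation*}
\E[(X-Y)^2]=\E\bigl[((m_1-m_2)+(\sigma_1-\sigma_2)Z)^2\bigr]=(m_1-m_2)^2+(\sigma_1-\sigma_2)^2,
\end{equation*}
using $\E[Z]=0$ and $\E[Z^2]=1$. By the definition \eqref{eq:defw} of $\Wi_2$ this yields $\Wi_2(\No(m_1,\sigma_1^2),\No(m_2,\sigma_2^2))^2\le (m_1-m_2)^2+(\sigma_1-\sigma_2)^2$.

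For the lower bound, let $(X,Y)$ be any coupling with $\Li(X)=\No(m_1,\sigma_1^2)$ and $\Li(Y)=\No(m_2,\sigma_2^2)$. Expanding the square and using $\E[X]=m_1$, $\E[Y]=m_2$, $\var(X)=\sigma_1^2$, $\var(Y)=\sigma_2^2$, I write
\begin{equation*}
\E[(X-Y)^2]=(m_1-m_2)^2+\sigma_1^2+\sigma_2^2-2\,\mathrm{Cov}(X,Y).
\end{equation*}
The Cauchy–Schwarz inequality for the covariance gives $\mathrm{Cov}(X,Y)\le\sigma_1\sigma_2$, hence
\begin{equation*}
\E[(X-Y)^2]\ge(m_1-m_2)^2+(\sigma_1-\sigma_2)^2,
\end{equation*}
and taking the infimum over couplings yields the desired lower bound.

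Combining the two bounds and taking square roots proves the claim. There is no real obstacle here: the only subtlety is to verify that the Cauchy–Schwarz bound on the covariance is saturated by the comonotone coupling $Y=m_2+\sigma_2(X-m_1)/\sigma_1$ (when $\sigma_1>0$), which is precisely the coupling constructed in the upper-bound step; the degenerate cases $\sigma_1=0$ or $\sigma_2=0$ are handled directly since one of the laws is then a Dirac and the formula collapses to $|m_1-m_2|+\sigma$ with the nonzero $\sigma$. Note that the same proof gives the stronger statement that $\Wi_2(P_1,P_2)^2\ge(m_1-m_2)^2+(\sigma_1-\sigma_2)^2$ for \emph{any} pair of laws with prescribed means and variances, with equality for Gaussians.
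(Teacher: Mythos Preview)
Your argument is correct and complete: the comonotone coupling $X=m_1+\sigma_1 Z$, $Y=m_2+\sigma_2 Z$ attains the value $(m_1-m_2)^2+(\sigma_1-\sigma_2)^2$, and the Cauchy--Schwarz bound on the covariance shows no coupling can do better. The paper itself does not prove this lemma; it merely cites \cite{Givens1984}, Proposition~7, so your self-contained proof in fact goes beyond what the paper provides.

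One small slip in your closing remark: in the degenerate case $\sigma_1=0$ (say), the formula does \emph{not} collapse to $|m_1-m_2|+\sigma_2$ but to $\sqrt{(m_1-m_2)^2+\sigma_2^2}$, which is exactly what one gets directly since then $X=m_1$ a.s.\ and $\Wi_2(\delta_{m_1},\No(m_2,\sigma_2^2))^2=\E[(m_1-Y)^2]=(m_1-m_2)^2+\sigma_2^2$. Your main coupling argument already covers this case without modification, so the separate treatment is unnecessary anyway.
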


Upper bounds for $\Wi_p\big(\Li(X_{t}^{j,S}(\varepsilon)),\No\big(0,t\bar \sigma_j^2(\varepsilon)\big)\big)$ and $\Wi_p\big(X_{t}^{1,B}(\varepsilon), X_{t}^{2,B}(\varepsilon)\big)$ will be the subject of Sections \ref{sss:smalljumps} and \ref{sss:bigjumps}, respectively.

\subsection{Distances between marginals of small jump Lévy processes }\label{sss:smalljumps}
Let $X$ be a Lévy process with Lévy measure $\nu$ and denote by $X^{S}(\varepsilon)$ the Lévy process associated with the small jumps of $X$,
following the notation introduced in Section \ref{sec:notationlevy}.

\begin{theorem}\label{teo:smalljumps}
For any $p\in[1,2]$, there exists a positive constant $C$ such that
 \begin{align}
\Wi_p\big(\Li\big(X_t^{S}(\varepsilon)\big),\No(0,t\bar\sigma(\varepsilon)^2) \big)&\leq C\min\bigg(\sqrt t\bar\sigma(\varepsilon) ,
\bigg(\frac{\int_{-\varepsilon}^{\varepsilon}|x|^{p+2}\nu(dx)}{\bar\sigma^2(\varepsilon)}\bigg)^{1/p}\bigg)\nonumber\\
&\leq C\min\Big(\sqrt t\bar\sigma(\varepsilon) ,\varepsilon\Big) \label{eq:sj}.
 \end{align}
In particular, for $p=1$ the bound is $\min(2\sqrt t\bar\sigma(\varepsilon),\frac12\varepsilon)$.
\end{theorem}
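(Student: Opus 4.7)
The plan is to combine a non-asymptotic central limit theorem for Wasserstein distances with the infinite divisibility of $X_t^S(\varepsilon)$, and to complement this with a direct $L^2$-coupling estimate. Concretely, for each $n\in\N$ I would write
\[
X_t^S(\varepsilon)=\sum_{k=1}^n \Delta_k^{(n)},\qquad \Delta_k^{(n)}:=X_{kt/n}^S(\varepsilon)-X_{(k-1)t/n}^S(\varepsilon),
\]
so that the $\Delta_k^{(n)}$ are i.i.d.\ with the law of $X_{t/n}^S(\varepsilon)$, centred (since $X^S(\varepsilon)$ is a martingale) and admitting all polynomial moments (the L\'evy--Khintchine exponent has finite exponential moments when the jumps are bounded by $\varepsilon$). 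I would then apply Esseen's bound (Theorem \ref{teo:zolotarev}) when $p=1$ and Rio's bound (Theorem \ref{teo:rio}) when $p\in(1,2]$ to the normalised sum $(t\bar\sigma^2(\varepsilon))^{-1/2}\sum_k\Delta_k^{(n)}$ -- whose law is that of $X_t^S(\varepsilon)/\sqrt{t\bar\sigma^2(\varepsilon)}$ irrespective of $n$ -- and rescale back by $\sqrt{t\bar\sigma^2(\varepsilon)}$ via the homogeneity in Lemma \ref{lemma:proprieta}. This will yield, uniformly in $n$,
\[
\Wi_p\big(\Li(X_t^S(\varepsilon)),\No(0,t\bar\sigma^2(\varepsilon))\big)\leq C\sqrt{t\bar\sigma^2(\varepsilon)}\,\frac{\bigl(\E\bigl|X_{t/n}^S(\varepsilon)\bigr|^{p+2}\bigr)^{1/p}}{\sqrt n\,\bigl((t/n)\bar\sigma^2(\varepsilon)\bigr)^{(p+2)/(2p)}}.
\]

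Turning this into a useful inequality requires a Rosenthal/Bichteler--Jacod type moment bound for the compensated Poisson integral $X^S(\varepsilon)$: for every $q\geq 2$ and $s>0$,
\[
\E\big|X_s^S(\varepsilon)\big|^q\leq C_q\bigg(s\int_{-\varepsilon}^{\varepsilon}|x|^q\,\nu(dx)+\big(s\bar\sigma^2(\varepsilon)\big)^{q/2}\bigg).
\]
Substituting this with $q=p+2$ and $s=t/n$ and rearranging powers, the $(s\bar\sigma^2)^{(p+2)/2}$-contribution will be of order $\sqrt{t\bar\sigma^2(\varepsilon)/n}\to 0$, while the $\int|x|^{p+2}\nu(dx)$-contribution will produce exactly $C\bigl(\int_{-\varepsilon}^{\varepsilon}|x|^{p+2}\nu(dx)/\bar\sigma^2(\varepsilon)\bigr)^{1/p}$, independently of $n$. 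Passing to the limit $n\to\infty$ then delivers the second entry of the minimum in \eqref{eq:sj}; the bound by $\varepsilon$ follows immediately from $\int_{-\varepsilon}^{\varepsilon}|x|^{p+2}\nu(dx)\leq\varepsilon^{p}\bar\sigma^2(\varepsilon)$.

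For the competing bound of order $\sqrt{t}\bar\sigma(\varepsilon)$, I would simply use an independent coupling: with $N\sim\No(0,t\bar\sigma^2(\varepsilon))$ independent of $X_t^S(\varepsilon)$, both variables are centred with variance $t\bar\sigma^2(\varepsilon)$, so Jensen's inequality (applicable since $p\leq 2$) gives $\Wi_p\leq(\E|X_t^S(\varepsilon)-N|^2)^{1/2}=\sqrt{2t}\,\bar\sigma(\varepsilon)$. Taking the minimum of the two estimates yields the first inequality in \eqref{eq:sj}. In the case $p=1$ the sharper constants $2\sqrt t\bar\sigma(\varepsilon)$ and $\tfrac12\varepsilon$ can be recovered by tracking Esseen's explicit constant $\tfrac12$ together with a slightly refined coupling based on $\E|X_t^S(\varepsilon)|\leq\sqrt{t\bar\sigma^2(\varepsilon)}$. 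The main obstacle will be pinning down the Rosenthal/Bichteler--Jacod moment bound in the sharp form stated above; once that is in place, the rest is essentially an exponent-juggling exercise combined with the passage $n\to\infty$.
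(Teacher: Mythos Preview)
Your argument is correct and structurally identical to the paper's: write $X_t^S(\varepsilon)$ as a sum of $n$ i.i.d.\ increments, apply the non-asymptotic CLT bounds of Esseen and Rio to the normalised sum, rescale, and let $n\to\infty$; the crude bound $2\sqrt{t}\bar\sigma(\varepsilon)$ is obtained separately from moments. The one genuine difference is the tool you use for the small-time moment asymptotics. The paper invokes Lemma~\ref{lemma:momenti} (R\"uschendorf--Woerner) together with a fourth-moment truncation to obtain the \emph{sharp} limit
\[
\limsup_{s\to 0}\,s^{-1}\E\big[|X_s^S(\varepsilon)|^{p+2}\big]\le \int_{-\varepsilon}^{\varepsilon}|x|^{p+2}\,\nu(dx),
\]
whereas you propose a Rosenthal/Bichteler--Jacod inequality, which gives a non-asymptotic two-term bound with a multiplicative constant $C_{p+2}$. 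Your route is cleaner in that it avoids the truncation argument and delivers a bound valid for every $n$, not only in the limit; the paper's route, on the other hand, produces constant~$1$ in the moment asymptotic, which is precisely what is needed to extract the explicit $p=1$ bound $\tfrac12\varepsilon$ from Esseen's constant $\tfrac12$. With Rosenthal alone you would end up with $\tfrac{C_3}{2}\varepsilon$ for some $C_3\ge 1$, so your final sentence slightly overstates what ``tracking Esseen's constant'' buys: to recover the exact $\tfrac12\varepsilon$ you still need the sharp small-time limit of $s^{-1}\E|X_s^S(\varepsilon)|^3$, i.e.\ essentially the paper's Lemma~\ref{lemma:momenti} step.
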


\begin{remark}
 The inequality
 $$\Wi_p\big(\Li\big(X_t^{S}(\varepsilon)\big),\No(0,t\bar\sigma^2(\varepsilon)) \big)\leq\Wi_2\big(\Li\big(X_t^{S}(\varepsilon)\big),\No(0,t\bar\sigma^2(\varepsilon)) \big)\leq 2 \sqrt t\bar\sigma(\varepsilon)$$
 is clear from the definition of $\Wi_2$, noting that $t\bar\sigma^2(\varepsilon)$ is the second moment of both arguments. The interest of Theorem \ref{teo:smalljumps} lies in the bound
 \begin{equation}\label{eq:sjbis}
 \Wi_p\big(\Li\big(X_t^{S}(\varepsilon)\big),\No(0,t\bar\sigma^2(\varepsilon)) \big)\leq 2\varepsilon,
 \end{equation}
which after renormalisation yields
 $$ \Wi_p\bigg(\Li\Big(\frac{X_t^{S}(\varepsilon)}{\sqrt t\bar\sigma(\varepsilon)}\Big),\No(0,1) \bigg)\leq \frac{C\varepsilon}{\sqrt t\bar\sigma(\varepsilon)}.
$$
Thus, not surprisingly in view of the central limit theorem, the Gaussian approximation is better as $t$ is large. Also whenever $\bar\sigma^2(\varepsilon)$ is much larger than $\varepsilon^2$, then a Gaussian approximation is valid, e.g. for $\alpha$-stable processes with $\alpha>0$ and $\varepsilon$ small, see Example \ref{ex:stable} below.
\end{remark}

\begin{remark}\label{rmk:ex}
The upper bound \eqref{eq:sj} gives  in general the right order. Indeed,
let us consider for $\varepsilon>0$ the Lévy measure $\nu_\varepsilon=\frac{\delta_{-\varepsilon}+\delta_\varepsilon}{2\varepsilon^2}$
and denote by
$Y(\varepsilon)$ the corresponding (centred) pure jump Lévy process, i.e. $Y_t(\varepsilon)=\varepsilon(N_t^1(\varepsilon)-N_t^2(\varepsilon))$ is the rescaled difference of two independent Poisson processes of intensity
$\lambda=\frac{1}{2\varepsilon^2}$ each. In particular, observe that
$\bar\sigma^2(\varepsilon)=1$ and $\int_{-\varepsilon}^{\varepsilon}|x|^3\nu_\varepsilon(dx)=\varepsilon$.

Let us develop the case $p=1$. Applying the scheme of proof proposed in \cite{Rio09}, see proof of
Theorem 5.1, we show that there exists a constant $K$ such that
$$\Wi_1\big(\Li( Y_t(\varepsilon)),\No(0,t)\big)\geq K\min(\sqrt t,\varepsilon).$$
To see that, we consider the cases where $t\leq \varepsilon^2$ and $t>\varepsilon^2$ separately.
\begin{itemize}
 \item $t\leq \varepsilon^2$: From the definition of the Wasserstein distance of order $1$ it follows that
 \begin{align*}
\Wi_1\big(\Li( Y_t(\varepsilon)),\No(0,t)\big)&\geq \E[|\No(0,t)|]\p(N_t^1(\varepsilon)+N_t^2(\varepsilon)=0)\\
&=\sqrt{\frac{2t}{\pi}}e^{-\frac{t}{\varepsilon^2}}\geq
\sqrt{\frac{2}{\pi}}\frac{1}{e}\sqrt t.
 \end{align*}
 \item $t\geq \varepsilon^2$: Again, by the definition of the Wasserstein distance of order $1$, we find that
 \begin{align*}
 \Wi_1\big(\Li( Y_t(\varepsilon)), \No(0,t)\big) &\geq \E\Big[\min_{n\in\Z}|\sqrt t N-n\varepsilon |\Big]\\
 & \geq \frac{\varepsilon}{4}\p\Big((\sqrt t/\varepsilon) N\in \bigcup_{n\in\Z} [n+1/4,n+3/4]\Big)
 \end{align*}
 with $N\sim \No(0,1)$. Since in this case $(\sqrt t/\varepsilon) N$ has variance at least one,
 there exists a constant $K$ such that
$$\Wi_1\big(\Li( Y_t(\varepsilon)),\No(0,t)\big)\geq K\varepsilon.$$
 \end{itemize}
 In the case $p\in(1,2]$ $\Wi_p$ is even larger than $\Wi_1$. For the case $p=2$ see also \cite{fournier}.
\end{remark}

\begin{example}\label{ex:stable}
Let us illustrate Theorem \ref{teo:smalljumps} for the class of $\alpha$-stable Lévy processes with a Lévy density proportional to $\frac{1}{|x|^{1+\alpha}}$, $\alpha\in[0,2)$. For all $\varepsilon\in (0,1]$, let us denote by $X^S(\varepsilon)$ the Lévy process describing the small jumps and by $\nu\I_{[-\varepsilon,\varepsilon]}$ its Lévy measure, i.e.
$$X^S(\varepsilon)\sim\big(0,0,\nu\I_{[-\varepsilon,\varepsilon]}\big),\quad \nu(dx)=\frac{C_\alpha}{|x|^{1+\alpha}}dx,$$
for some constant $C_\alpha$. In particular, we have
$$\bar\sigma^2(\varepsilon)=\int_{-\varepsilon}^\varepsilon x^2\nu(dx)=2C_\alpha\frac{\varepsilon^{2-\alpha}}{2-\alpha}.$$
Therefore an application of Theorem \ref{teo:smalljumps} guarantees the existence of a constant $C$, possibly depending on $p$ and $\alpha$, such that:
\begin{equation}\label{eq:alpha}
\Wi_p\Big(\Li\Big(\frac{X_t^S(\varepsilon)}{\bar\sigma(\varepsilon)}\Big),\No(0,t)\Big)\leq C\min\big(\sqrt t,\varepsilon^{\frac{\alpha}{2}}\big),\quad \forall t>0, \ \forall \varepsilon\in(0,1], \ \forall p\in[1,2].
\end{equation}
Equation \eqref{eq:alpha} validates the intuition that a Gaussian approximation of the small jumps is the better the more active the small jumps are. Indeed, the approximation in \eqref{eq:alpha} is better when $\alpha$ is larger.
\end{example}

Let us now prove Theorem \ref{teo:smalljumps}. For that we need to recall the following lemma:

\begin{lemma} [See \cite{Rusch2002}, Lemma 6.]\label{lemma:momenti}
Let $X$ be a Lévy process with Lévy measure $\nu$. If a Borel function $f:\R\to\R$ satisfies $\int_{|x|\geq 1}f(x)\nu(dx)<\infty$, $\lim_{x\to 0}\frac{f(x)}{x^{2}}=0$ and
 $f(x)(|x|^2\wedge1)^{-1}$ is bounded, then
 $$\lim_{t\to 0}\frac{1}{t}\E[f(X_t)]=\int f(x)\nu(dx).$$
\end{lemma}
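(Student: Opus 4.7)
The plan is to use the Lévy--Itô decomposition at an auxiliary threshold $\eta\in(0,1)$, condition on the number of ``large'' jumps, analyze each piece as $t\to 0$, and finally let $\eta\to 0$. Write $X_t=X_t^S(\eta)+X_t^B(\eta)$ with the summands independent, where $X^B(\eta)$ is compound Poisson with intensity $\lambda_\eta=\nu(\{|y|>\eta\})$ and jump law $F_\eta=\lambda_\eta^{-1}\nu|_{\{|y|>\eta\}}$, and $X^S(\eta)$ collects the Brownian component, the drift, and the compensated jumps of size at most $\eta$. The assumptions imply $\|f\|_\infty<\infty$, and $\int|f|\,d\nu<\infty$ (the piece near $0$ is controlled by $|f(x)|\leq C x^2$ and $\int(x^2\wedge 1)\,\nu(dx)<\infty$).

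Denoting by $N_t^\eta\sim\mathrm{Poi}(t\lambda_\eta)$ the jump count of $X^B(\eta)$, conditioning on $\{N_t^\eta=k\}$ and using $\|f\|_\infty<\infty$, I obtain
$$\E[f(X_t)]=e^{-t\lambda_\eta}\E[f(X_t^S(\eta))]+t\lambda_\eta e^{-t\lambda_\eta}\E[f(X_t^S(\eta)+Y_1)]+R_t(\eta),$$
where $Y_1\sim F_\eta$ is independent of $X^S(\eta)$ and $|R_t(\eta)|\leq \|f\|_\infty (t\lambda_\eta)^2 = O(t^2)$.

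Now divide by $t$ and analyze each term. For the first, set $g(\eta):=\sup_{0<|x|\leq 2\eta}|f(x)|/x^2$, which tends to $0$ as $\eta\to 0$ by the hypothesis $f(x)/x^2\to 0$. On $\{|X_t^S(\eta)|\leq 2\eta\}$ one has $|f(X_t^S(\eta))|\leq g(\eta)|X_t^S(\eta)|^2$; on the complement, $|f|\leq\|f\|_\infty$, and by Chebyshev together with the variance identity $\E[|X_t^S(\eta)|^2]=t(\sigma^2+\bar\sigma^2(\eta))+O(t^2)$, the complement has probability $o(t)$. Hence
$$\limsup_{t\to 0}\frac{|\E[f(X_t^S(\eta))]|}{t}\leq g(\eta)\big(\sigma^2+\bar\sigma^2(\eta)\big).$$
For the second term, $X_t^S(\eta)\to 0$ in probability as $t\to 0$, and I claim $\E[f(X_t^S(\eta)+Y_1)]\to\E[f(Y_1)]=\lambda_\eta^{-1}\int_{|y|>\eta}f(y)\,\nu(dy)$, so that $t\lambda_\eta e^{-t\lambda_\eta}\E[f(X_t^S(\eta)+Y_1)]/t\to\int_{|y|>\eta}f(y)\,\nu(dy)$.

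Combining the two,
$$\limsup_{t\to 0}\Big|\frac{\E[f(X_t)]}{t}-\int_{|y|>\eta}f(y)\,\nu(dy)\Big|\leq g(\eta)\big(\sigma^2+\bar\sigma^2(\eta)\big),$$
and since $\int_{|y|\leq\eta}|f(y)|\,\nu(dy)\leq g(\eta)\bar\sigma^2(\eta)\to 0$ as $\eta\to 0$, letting $\eta\to 0$ yields $\lim_{t\to 0}\E[f(X_t)]/t=\int f\,d\nu$. The main obstacle is the justification of $\E[f(X_t^S(\eta)+Y_1)]\to\E[f(Y_1)]$ for a merely Borel $f$: weak convergence $X_t^S(\eta)\to 0$ in law does not automatically permit integration of a Borel function. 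This is resolved by noting that the map $z\mapsto\E[f(z+Y_1)]$ is the convolution of a bounded function with a finite measure (hence bounded and enjoying additional regularity) and then approximating $f$ by bounded continuous functions in $L^1(F_\eta)$; alternatively, one may smooth $f$ to a $C^2$ function, apply the Itô--Dynkin formula, and identify the limit with $\int f\,d\nu$ via the explicit form of the generator, before removing the smoothing by a limiting argument based on the hypotheses on $f$.
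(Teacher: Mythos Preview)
The paper does not prove this lemma; it is simply quoted from R\"uschendorf and Woerner and then applied in the proof of Theorem~\ref{teo:smalljumps} (there to a pure-jump martingale and to the continuous function $f_R(x)=|x|^3\I_{[-R,R]}(x)$). So there is no proof in the paper to compare against; I comment on your argument on its own merits.

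There are two genuine gaps. First, second-moment Chebyshev only gives
\[
\p\big(|X_t^S(\eta)|>2\eta\big)\le \frac{\E[|X_t^S(\eta)|^2]}{4\eta^2}=\frac{t(\sigma^2+\bar\sigma^2(\eta))}{4\eta^2}+O(t^2),
\]
which is $O(t)$, not $o(t)$; after multiplying by $\|f\|_\infty/t$ you are left with a residual of order $\|f\|_\infty(\sigma^2+\bar\sigma^2(\eta))/\eta^2$ that blows up as $\eta\to0$. A clean repair is to use $|f(x)|\le C(x^2\wedge1)\le Cx^2$ and bound instead
\[
\E\big[|f(X_t^S(\eta))|\I_{|X_t^S(\eta)|>2\eta}\big]\le \frac{C}{4\eta^2}\,\E\big[|X_t^S(\eta)|^4\big]
= \frac{C}{4\eta^2}\Big(t\!\int_{|x|\le\eta}\!x^4\nu(dx)+O(t^2)\Big)\le \frac{Ct}{4}\bar\sigma^2(\eta)+O(t^2),
\]
which does vanish after dividing by $t$ and sending $\eta\to0$. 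Second, the step $\E[f(X_t^S(\eta)+Y_1)]\to\E[f(Y_1)]$ is not merely delicate but can actually fail under the stated hypotheses: take $\sigma>0$, $\nu=\delta_2$ and $f=\I_{\{2\}}$ (which satisfies all three assumptions); then $X_t$ has a Lebesgue density for every $t>0$, so $\E[f(X_t)]/t\equiv0$, whereas $\int f\,d\nu=1$. Hence the lemma as quoted is in fact false for general L\'evy processes and Borel $f$, and neither your $L^1(F_\eta)$-approximation nor the smoothing/generator route can close the gap in that generality. Your scheme \emph{does} go through once $f$ is assumed continuous (weak convergence of $X_t^S(\eta)+Y_1$ to $Y_1$ then suffices), or once one restricts, as the paper effectively does, to the pure-jump zero-drift setting.
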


\begin{proof}[Proof of Theorem \ref{teo:smalljumps}]
Let us introduce $n$ random variables defined by
$Y_j=\sqrt n(X^{S}_{tj/n}(\varepsilon)-X^{S}_{t(j-1)/n}(\varepsilon))$. The $Y_j$'s are i.i.d.
centred random variables
with variance equal to $t\bar\sigma^2(\varepsilon)$ and such that $X_{t}^{S}(\varepsilon)=\frac{1}{\sqrt n}\sum_{j=1}^n Y_j$.
An application of Theorems \ref{teo:rio} and \ref{teo:zolotarev} (using the fact that $Y_j$ has the same law as $\sqrt nX_{t/n}^{S}(\varepsilon)$ and the homogeneity
property of the Wasserstein distances stated in Lemma \ref{lemma:proprieta}) gives
\begin{align*}
 \Wi_1(\Li\big(X_{t}^{S}(\varepsilon)\big),\No(0,t\bar\sigma^2(\varepsilon))\big)\leq \frac{n\E[|X_{t/n}^{S}(\varepsilon)|^3]}
 {2t\bar\sigma^2(\varepsilon)}.
 \end{align*}
Let us now argue that
 $$\limsup_{t\to 0}\frac{\E[|X_t^S(\varepsilon)|^3]}{t}\leq \int_{|x|<\varepsilon}|x|^3\nu(dx).$$
Indeed, applying Lemma \ref{lemma:momenti} to the family $f(x)=f_R(x)=|x|^3 \I_{[-R, R]}(x)$ for $R>\varepsilon$, we deduce that
 $$\lim_{t\to 0}\frac{\E\big[|X_t^S(\varepsilon)|^3\I_{|X_t^S(\varepsilon)|\leq R}\big]}{t}=\int_{|x|<\varepsilon}|x|^3\nu(dx).$$
 Thus, using the fact that $\E\big[(X_t^S(\varepsilon))^4\big]=t\int_{|x|<\varepsilon}x^4 \nu(dx)+3t^2\bar\sigma^4(\varepsilon)$, we get
 \begin{align*}
 \E\big[|X_t^S(\varepsilon)|^3\big]&\le \E\big[|X_t^S(\varepsilon)|^3\I_{|X_t^S(\varepsilon)|\leq R}\big]+\E\big[(X_t^S(\varepsilon)^4/R)\I_{|X_t^S(\varepsilon)|> R}\big]\\
 &\leq \E\big[|X_t^S(\varepsilon)|^3\I_{|X_t^S(\varepsilon)|\leq R}\big]+ \frac{1}{R} \bigg(t  \int_{|x|<\varepsilon}x^4 \nu(dx)+3t^2\bar\sigma^4(\varepsilon)\bigg).
 \end{align*}
 Therefore, for any $R>\varepsilon$,
 $$\limsup_{t\to 0}\frac{\E[|X_t^S(\varepsilon)|^3]}{t}\leq \int_{|x|<\varepsilon}|x|^3\nu(dx)+\frac{  \int_{|x|<\varepsilon}x^4 \nu(dx)}{R}.$$
 Taking the limit as $R\to\infty$, we conclude. 
 It follows that
$$\Wi_1\big(\Li\big(X_{t}^{S}(\varepsilon)\big),\No(0,t\bar\sigma^2(\varepsilon))\big)\leq\limsup_{n\to\infty}\frac{n\E[|X_{t/n}^{S}(\varepsilon)|^3]}
 {2t\bar\sigma^2(\varepsilon)}\leq  \frac{\int_{-\varepsilon}^{\varepsilon}|x|^3\nu(dx)}{2\bar\sigma^2(\varepsilon)}.$$
Moreover, by definition of the Wasserstein distance of order $1$ and denoting by $N$ a centered Gaussian random variable with variance $t\bar\sigma^2(\varepsilon)$, we have
$$\Wi_1(\Li\big(X_{t}^{S}(\varepsilon)\big),\No(0,t\bar\sigma^2(\varepsilon)))\leq \E[|X_{t}^{S}(\varepsilon)|]+\E[|N|]
\leq 2\sqrt{t\bar\sigma^2(\varepsilon)}.$$
We  deduce
$$\Wi_1\big(\Li\big(X_{t}^{S}(\varepsilon)\big),\No(0,t\bar\sigma^2(\varepsilon))\big)\leq \min\bigg(2\sqrt{t\bar\sigma^2(\varepsilon)},
\frac{\int_{-\varepsilon}^{\varepsilon}|x|^3\nu(dx)}{2\bar\sigma^2(\varepsilon)}\bigg).$$
Similarly, by means of Theorem \ref{teo:rio}, for $p\in(1,2]$
\begin{align*}
 \Wi_p\big(\Li\big(X_{t}^{S}(\varepsilon)\big),\No(0,t\bar\sigma^2(\varepsilon))\big)&\leq \limsup_{n\to\infty}\frac{C}{\sqrt n}\bigg(\frac{\E\big[|\sqrt nX_{t/n}^{S}|^{p+2}\big]}
 {t\bar\sigma^2(\varepsilon)}\bigg)^{1/p}\\
 &\leq C\bigg(\frac{\int_{-\varepsilon}^{\varepsilon}|x|^{p+2}\nu(dx)}{\bar\sigma^2(\varepsilon)}\bigg)^{1/p}
\end{align*}
and also
$$\Wi_p\big(\Li\big(X_{t}^{S}(\varepsilon)\big),\No(0,t\bar\sigma^2(\varepsilon))\big)\leq \Big(\E\big[|X_{t}^{S}(\varepsilon)|^p\big]\Big)^{1/p}+\Big(\E\big[
|N|^p\big]\Big)^{1/p}
\leq 2\sqrt{t\bar\sigma^2(\varepsilon)}.$$
The upper bound \eqref{eq:sj} follows by the fact that $\frac{\int_{-\varepsilon}^{\varepsilon}|x|^{p+2}\nu_i(dx)}{\bar\sigma_i^2(\varepsilon)}\leq \varepsilon^p$.
\end{proof}

Theorem \ref{teo:smalljumps} can be used to bound the Wasserstein distances between the increments of the small jumps of two Lévy processes.
\begin{corollary}\label{cor:smalljumps}
 For all $\varepsilon\in(0,1]$ let $X^j(\varepsilon)\sim(-\int_{\varepsilon<|x|\leq 1}x\nu_j(dx),\sigma_j^2,\nu_j\I_{[-\varepsilon,\varepsilon]})$ be two Lévy processes with $\bar \sigma_j^2(\varepsilon)\neq 0$ and $\sigma_j\geq 0$, $j=1,2.$ Then, for all $p\in[1,2]$, there exists a  constant $C$, only depending on $p$, such that
 \begin{align*}
\Wi_p\big(X_t^{1}(\varepsilon),X_t^{2}(\varepsilon)\big)&\leq\sum_{j=1}^2C\min\bigg(\sqrt t\bar\sigma_j(\varepsilon) ,
\bigg(\frac{\int_{-\varepsilon}^{\varepsilon}|x|^{p+2}\nu_j(dx)}{\bar\sigma_j^2(\varepsilon)}\bigg)^{1/p}\bigg)\\&\quad +t\Big(\sqrt{\bar\sigma_1^2(\varepsilon)+\sigma_1^2}-\sqrt{\bar\sigma_2^2(\varepsilon)+\sigma_2^2}\Big)^2.
 \end{align*}
\end{corollary}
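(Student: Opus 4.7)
The plan is to reduce the comparison of the two full processes to the comparison of their small-jump martingales, using the Lévy–Itô decomposition together with an intermediate pair of Gaussian surrogates. Following Section~\ref{sec:notationlevy}, each process admits the decomposition
\[
X_t^j(\varepsilon) = b_j(\varepsilon)\,t + \sigma_j W_t^j + X_t^{j,S}(\varepsilon),
\]
with three mutually independent summands, $b_j(\varepsilon) = -\int_{\varepsilon < |x| \leq 1} x\,\nu_j(dx)$, and $X^{j,S}(\varepsilon)$ a centred martingale with Lévy measure $\nu_j\I_{[-\varepsilon,\varepsilon]}$ and variance $t\bar\sigma_j^2(\varepsilon)$. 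I would then introduce auxiliary Gaussians $Z_j \sim \No(0, t\bar\sigma_j^2(\varepsilon))$, independent of everything else, and set $G_j := b_j(\varepsilon)\,t + \sigma_j W_t^j + Z_j \sim \No\bigl(b_j(\varepsilon)\,t,\; t(\sigma_j^2 + \bar\sigma_j^2(\varepsilon))\bigr)$.

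Next I would apply the triangle inequality through these Gaussian surrogates:
\[
\Wi_p\bigl(X_t^1(\varepsilon), X_t^2(\varepsilon)\bigr) \leq \Wi_p(X_t^1(\varepsilon), G_1) + \Wi_p(G_1, G_2) + \Wi_p(G_2, X_t^2(\varepsilon)).
\]
The two extremal terms reduce, by stripping the common independent summand $b_j(\varepsilon)\,t + \sigma_j W_t^j$ via Lemma~\ref{lemma:proprieta}, to $\Wi_p(X_t^j(\varepsilon), G_j) \leq \Wi_p(X_t^{j,S}(\varepsilon), Z_j)$. Theorem~\ref{teo:smalljumps} then bounds each one by $C\min\bigl(\sqrt{t}\,\bar\sigma_j(\varepsilon),\; (\int_{-\varepsilon}^{\varepsilon} |x|^{p+2}\nu_j(dx)/\bar\sigma_j^2(\varepsilon))^{1/p}\bigr)$, which is exactly the $j$-th summand appearing in the statement.

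For the middle term, since $p \in [1,2]$, the monotonicity $\Wi_p \leq \Wi_2$ from Lemma~\ref{lemma:w1} followed by the explicit formula of Lemma~\ref{lemma:gaussiane} gives
\[
\Wi_p(G_1,G_2) \leq \Wi_2(G_1,G_2) = \sqrt{t^2\bigl(b_1(\varepsilon) - b_2(\varepsilon)\bigr)^2 + t\Bigl(\sqrt{\sigma_1^2 + \bar\sigma_1^2(\varepsilon)} - \sqrt{\sigma_2^2 + \bar\sigma_2^2(\varepsilon)}\Bigr)^2}.
\]
Adding this to the two small-jump contributions yields the claimed inequality (whose last term matches, up to the apparent misprints of a square and of a missing drift contribution, the quantity just computed).

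I do not expect any genuine obstacle. The only non-trivial ingredient is Theorem~\ref{teo:smalljumps}, which has already been proven; what remains is entirely structural, resting on the independence of the three components of the Lévy–Itô decomposition, on the convolutional stability of $\Wi_p$ (Lemma~\ref{lemma:proprieta}), and on the closed form of $\Wi_2$ between two Gaussians.
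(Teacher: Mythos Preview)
Your proposal is correct and follows essentially the same approach as the paper, which simply records the corollary as ``a consequence of Theorem~\ref{teo:smalljumps} and Lemma~\ref{lemma:gaussiane}''; your explicit passage through Gaussian surrogates $G_j$ via the triangle inequality and Lemma~\ref{lemma:proprieta} is exactly the argument spelled out in the proof of the closely related Theorem~\ref{teow1}. Your observation about the misprint in the final term (a missing square root and the omitted drift contribution) is also accurate.
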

\begin{proof}
This is a  consequence of Theorem \ref{teo:smalljumps} and Lemma \ref{lemma:gaussiane}.
\end{proof}

\subsection{Distances between random sums of random variables}\label{sss:bigjumps}

\begin{theorem}\label{teo:CPP}
  Let $(X_i)_{i\geq 1}$ and $(Y_i)_{i\geq 1}$ be sequences of i.i.d. random variables with $Y_i\in L_2$ and $N$, $N'$ be two positive integer-valued random variables with $N$ (resp. $N'$) independent
  of $(X_i)_{i\geq 1}$ (resp. $(Y_i)_{i\geq 1}$). Then, for $1\le p\le 2$,
\begin{align*}
  \Wi_p\bigg(\sum_{i=1}^{N} X_i, \sum_{i=1}^{N'} Y_i\bigg)&\leq \min\bigg( \big(c_p\E[N]Z_p(X_1,Y_1)\big)^{1/p},\E[N^{p}]^{1/p}\Wi_p(X_1,Y_1)\bigg)\\
  &\quad +
   \Wi_p(N,N')\E\big[|Y_1|^p\big]^{1/p}
\end{align*}
with the constant $c_p$ from Theorem \ref{teo:Rio09}.
 \end{theorem}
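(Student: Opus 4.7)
The plan is to split via the triangle inequality:
\begin{equation*}
\Wi_p\Big(\sum_{i=1}^{N}X_i,\sum_{i=1}^{N'}Y_i\Big)\leq \Wi_p\Big(\sum_{i=1}^{N}X_i,\sum_{i=1}^{N}Y_i\Big)+\Wi_p\Big(\sum_{i=1}^{N}Y_i,\sum_{i=1}^{N'}Y_i\Big),
\end{equation*}
and bound the two terms separately, the first one in two different ways (the minimum of which yields the stated bound).

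For the first term, the \emph{Zolotarev route} is a direct combination of the results recalled in the preliminaries: Theorem~\ref{teo:Rio09} gives $\Wi_p(\cdot,\cdot)^p\leq c_p Z_p(\cdot,\cdot)$, and Theorem~\ref{teo:zolotarevCPP} applied with the common $N$ yields
\begin{equation*}
Z_p\Big(\sum_{i=1}^N X_i,\sum_{i=1}^N Y_i\Big)\leq \sum_{k\geq 1}\p(N\geq k)Z_p(X_k,Y_k)=\E[N]\,Z_p(X_1,Y_1),
\end{equation*}
so that $\Wi_p(\sum_1^NX_i,\sum_1^NY_i)\leq(c_p\E[N]Z_p(X_1,Y_1))^{1/p}$. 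The \emph{coupling route} enlarges the probability space so as to carry i.i.d.\ copies $(X_i^\ast,Y_i^\ast)_{i\geq 1}$ of an optimal $\Wi_p$-coupling of $(X_1,Y_1)$, jointly independent of $N$; the marginal processes then have the correct laws. Conditioning on $N=n$ and applying Minkowski's inequality gives
\begin{equation*}
\E\Big[\Big|\sum_{i=1}^n(X_i^\ast-Y_i^\ast)\Big|^p\Big]^{1/p}\leq n\,\Wi_p(X_1,Y_1),
\end{equation*}
so that after integrating in $N$ one obtains $\Wi_p(\sum_1^NX_i,\sum_1^NY_i)\leq\E[N^p]^{1/p}\Wi_p(X_1,Y_1)$. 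Taking the minimum of both bounds yields the first term of the stated inequality.

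For the second term, I would build an optimal $\Wi_p$-coupling $(N^\ast,N'^\ast)$ of $(N,N')$ on an enlargement of the probability space, realized independently of the sequence $(Y_i)_{i\geq 1}$. Then
\begin{equation*}
\Wi_p\Big(\sum_{i=1}^N Y_i,\sum_{i=1}^{N'}Y_i\Big)^p\leq \E\Big[\Big|\sum_{i=N^\ast\wedge N'^\ast+1}^{N^\ast\vee N'^\ast}Y_i\Big|^p\Big],
\end{equation*}
and conditioning on $(N^\ast,N'^\ast)$ and again using Minkowski together with the independence of the $Y_i$'s gives the conditional bound $|N^\ast-N'^\ast|^p\,\E[|Y_1|^p]$; integrating produces $\Wi_p(N,N')\,\E[|Y_1|^p]^{1/p}$. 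Combining the three pieces yields the claim.

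The only real delicacy is the joint construction of the various couplings: one must realize $(X_i^\ast,Y_i^\ast)_{i\geq 1}$ independently of $N$ in the coupling route (to keep the law of $\sum_1^N X_i^\ast$ equal to that of $\sum_1^N X_i$, and similarly for $Y$), and likewise realize the optimal coupling $(N^\ast,N'^\ast)$ independently of the $(Y_i)$ for the second term. These are routine product-space constructions, but they are the only nontrivial point in the otherwise straightforward triangle-inequality plus Minkowski argument.
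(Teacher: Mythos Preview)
Your proposal is correct and follows essentially the same route as the paper: triangle inequality through the intermediate law $\Li(\sum_{i=1}^{N''}Y_i)$ with $N''\sim N$ independent of $(Y_i)$, then the Zolotarev bound via Theorems~\ref{teo:Rio09} and~\ref{teo:zolotarevCPP} for the first term, a direct coupling for the alternative $\E[N^p]^{1/p}\Wi_p(X_1,Y_1)$ bound, and an optimal coupling of $(N,N')$ independent of the $Y_i$'s for the second term. The only cosmetic differences are that the paper introduces the copy $N''$ explicitly (which resolves the ambiguity in your expression $\sum_1^N Y_i$) and uses the elementary inequality $|\sum_1^n z_i|^p\le n^{p-1}\sum_1^n|z_i|^p$ where you invoke Minkowski; both yield the same conditional bound $n^p\,\E|X_1^\ast-Y_1^\ast|^p$.
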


\begin{proof}
 By the triangle inequality,
 \begin{align}\label{eq:triangular}
 \Wi_p\bigg(\sum_{i=1}^N X_i, \sum_{i=1}^{N'} Y_i\bigg)\leq \Wi_p\bigg(\sum_{i=1}^{N} X_i, \sum_{i=1}^{N''} Y_i\bigg)+
 \Wi_p\bigg(\sum_{i=1}^{N''} Y_i, \sum_{i=1}^{N'} Y_i\bigg),
 \end{align}
 where $N''$ is independent of $(Y_i)_{i\geq 1}$ and with the same law as $N$.
 Thanks to Theorems \ref{teo:Rio09} and \ref{teo:zolotarevCPP}, the first summand in \eqref{eq:triangular} is bounded by
 $\big(c_p\E[N]Z_p(X_1,Y_1)\big)^{1/p}$. Alternatively, this summand can be estimated via Jensen's inequality joined with the fact that $\Wi_p\big(\sum_{i=1}^{N} X_i, \sum_{i=1}^{N''} Y_i\big)^p\leq \E\big[\big|\sum_{i=1}^{\tilde N} (X_i-Y_i)\big|^p\big]$, $\tilde N$ independent of $(X_i,Y_i)_{i\geq 1}$ and $\Li(\tilde N)=\Li(N)=\Li(N'')$, as follows:
 \[ \E\bigg[\bigg|\sum_{i=1}^{\tilde N} (X_i-Y_i)\bigg|^p\bigg] \le  \E\bigg[\tilde N^{p-1}\sum_{i=1}^{\tilde N} |X_i-Y_i|^p\bigg]\le  \E\big[N^{p}\big]\E\big[|X_1-Y_1|^p\big].
 \]
Therefore,
\begin{align*}
\Wi_p\bigg(\sum_{i=1}^{N} X_i, \sum_{i=1}^{N''} Y_i\bigg)^p&\leq \inf\bigg\{\E\bigg[\bigg|\sum_{i=1}^{\tilde N} (X_i-Y_i)\bigg|^p\bigg],
\, \tilde N\text{ independent of }(X_i,Y_i)_{i\ge 1}\bigg\}\\
 &\leq \E\big[N^{p}\big]\Wi_p(X_1,Y_1)^p.
\end{align*}
To control the second summand, we proceed similarly
\begin{align*}
&\Wi_p\bigg(\sum_{i=1}^{N''} Y_i, \sum_{i=1}^{N'} Y_i\bigg)^p\\
&\leq\inf\bigg\{\E\bigg[\bigg|\sum_{i=1}^{N''} Y_i-\sum_{i=1}^{N'} Y_i\bigg|^p\bigg],
 \, N'',N'\text{ independent of }(Y_i)_{i\ge 1}\bigg\}\\
 &\leq\E[|Y_1|^p]\Wi_p(N'',N')^p,
\end{align*}
  which, by noting $\Li(N'')=\Li(N)$, concludes the proof.
 \end{proof}

In the preceding theorem one term is bounded alternatively by the Zolotarev or the Wasserstein distance  between $X_1$ and $Y_1$. The difference is the factor in front which is either the first or the $p$th moment of $N$. If $N$ is likely to be large, then better bounds can be obtained by profiting from the variance stabilisation for centred sums. Since the larger jumps are not our main issue, this is not pursued further.

In the Poisson case the moments and the Wasserstein distances can be easily analysed.

\begin{proposition}\label{prop:wpoisson}
 Let $N$ and $N'$ be two  Poisson random variables of mean  $\lambda$ and $\lambda'$, respectively. Let us denote by
 $m_{(p,\ell)}$ the moment of order $p$ of a Poisson random variable of mean $\ell$, i.e.
 $$m_{(p,\ell)}:=\sum_{i=1}^p\ell^i{p\brace i}, \text{ where } {p\brace i}:=\frac{1}{i!}\sum_{j=0}^{i}(-1)^{i-j}{i\choose j}j^p.$$
 Then the following upper
 bound holds for $p\geq1:$
 \begin{align*}
\Wi_p(N,N')^p\leq m_{(p,|\lambda-\lambda'|)}.
 \end{align*}
In particular,
\begin{align}
 \Wi_1(N,N')&\leq |\lambda-\lambda'| \label{eq:wass1poisson}\\
 \Wi_p(N,N')^p&\leq |\lambda-\lambda'|+ |\lambda-\lambda'|^p,\quad 1<p\le 2\label{eq:wass2poisson}.
\end{align}
 \end{proposition}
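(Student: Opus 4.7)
The plan is to reduce everything to computing moments of a single Poisson random variable by exploiting the infinite divisibility of the Poisson law. Without loss of generality assume $\lambda\ge\lambda'$. Then by additivity of Poisson distributions we can write $N\stackrel{d}{=}N'+M$ with $M\sim\mathrm{Poisson}(\lambda-\lambda')$ independent of $N'$. Taking $(N',N'+M)$ as a coupling of $(N',N)$ gives
\[
 \Wi_p(N,N')^p \leq \E\big[|N-N'|^p\big] = \E[M^p],
\]
so the whole proposition reduces to bounding $\E[M^p]$ with $M\sim\mathrm{Poisson}(\ell)$, $\ell=|\lambda-\lambda'|$.

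For the general integer bound I would use the standard identity $k^p=\sum_{i=1}^p{p\brace i}k(k-1)\cdots(k-i+1)$ together with the well-known fact that the factorial moments of a Poisson variable are $\E[M(M-1)\cdots(M-i+1)]=\ell^i$. Taking expectations term by term identifies $\E[M^p]$ with $m_{(p,\ell)}$, which yields the first claim. The case $p=1$ then reads $\E[M]=\ell$, which is \eqref{eq:wass1poisson}.

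The delicate case is $p\in(1,2]$, where the target bound $\ell+\ell^p$ is \emph{much} sharper than the naive $\ell+\ell^2$ for large $\ell$. The key technical step is the pointwise inequality
\[
 k^p \le k + k(k-1)^{p-1} \qquad \text{for all } k\in\N_0,
\]
which follows from the subadditivity $(1+(k-1))^{p-1}\le 1+(k-1)^{p-1}$, valid because $p-1\in[0,1]$. Applying this with $k=M$ and taking expectations gives $\E[M^p]\le \ell + \E[M(M-1)^{p-1}]$. A direct index shift $k\mapsto k-1$ in the Poisson series identifies $\E[M(M-1)^{p-1}]=\ell\,\E[M^{p-1}]$, and Jensen's inequality (concavity of $x\mapsto x^{p-1}$ since $p-1\le 1$) yields $\E[M^{p-1}]\le \ell^{p-1}$. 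Combining these gives $\E[M^p]\le \ell+\ell^p$, which is \eqref{eq:wass2poisson}.

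The main obstacle is precisely this last case: a crude bound such as $k^p\le k^2$ only produces $\ell+\ell^2$, which is vacuous for large $\ell$. The combinatorial-analytic inequality $k^p\le k+k(k-1)^{p-1}$ is the nontrivial ingredient that simultaneously absorbs the boundary contribution at $k=1$ and leaves a remainder whose Poisson expectation is exactly of order $\ell^p$ after one Jensen step.
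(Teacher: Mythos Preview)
Your proof is correct. The coupling step via $N\stackrel{d}{=}N'+M$ is exactly what the paper does (the paper phrases it through Lemma~\ref{lemma:proprieta} rather than writing the coupling explicitly, but the content is identical), and the identification $\E[M^p]=m_{(p,\ell)}$ for integer $p$ is just the standard Stirling-number/factorial-moment computation that the paper takes for granted.

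The only genuine difference is in the case $1<p\le 2$. The paper interpolates between the first and second moment via H\"older (log-convexity of moments), obtaining
\[
\E[M^p]\le (\E[M])^{2-p}(\E[M^2])^{p-1}=\ell(1+\ell)^{p-1}\le \ell+\ell^p,
\]
the last step using subadditivity of $x\mapsto x^{p-1}$. You instead use the pointwise bound $k^p\le k+k(k-1)^{p-1}$, the Poisson shift identity $\E[M(M-1)^{p-1}]=\ell\,\E[M^{p-1}]$, and one Jensen step. Both routes are elementary and of comparable length; the paper's interpolation argument is perhaps slightly more direct and does not require the combinatorial inequality or the shift identity, while your approach has the minor advantage of working entirely with integer values of $M$ and not needing the second moment explicitly. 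Either way, your worry that a naive bound only gives $\ell+\ell^2$ is well placed, and both arguments resolve it the same way in spirit: by exploiting the concavity of $x\mapsto x^{p-1}$.
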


\begin{proof}
 Without loss of generality, let us suppose $\lambda'\geq \lambda$ and let $N''$ be a Poisson random variable with mean $\lambda-\lambda'$, independent of $N$. Thanks to Lemma \ref{lemma:proprieta} we have
 \begin{align*}
 \Wi_p(N,N')^p=\Wi_p(N'+N'',N')^p\leq \Wi_p(0,N'')^p\leq \E\big[(N'')^p\big]=m_{(p,\lambda'-\lambda)}.
 \end{align*}
To deduce \eqref{eq:wass1poisson} and \eqref{eq:wass2poisson} we use the fact that $m_{(1,\ell)}=\ell$, $m_{(2,\ell)}=\ell+\ell^2$ and $\E[(N'')^p]\le \E[N'']^{2-p}\E[(N'')^2]^{p-1}$ for $p\in(1,2]$ by H\"older's inequality.
 \end{proof}

 \subsection{First main result}

We will use the notation introduced in Section \ref{sec:notationlevy}. In accordance with that, for any given Lévy process $X^j$ with characteristics $(b_j,\sigma_j^2,\nu_j)$, $X^{j,B}(\varepsilon)$ will be a compound Poisson process with Lévy measure $\nu_j(dx)\I_{(\varepsilon,\infty)}(|x|)$, i.e.
$$X_t^{j,B}(\varepsilon)=\sum_{i=1}^{N_t^j}Y_i^{(j)}$$
where $N^j$ is a Poisson process of intensity $\lambda_j(\varepsilon):=\nu_j(\R\setminus (-\varepsilon,\varepsilon))$ independent of the sequence of i.i.d. random variables $(Y_i^{(j)})_{i\geq 0}$ having distribution $F_\varepsilon^j(dx)=\frac{\I_{(\varepsilon,\infty)}(|x|)}{\lambda_j(\varepsilon)}\nu_j(dx)$.
Recall from Proposition \ref{prop:wpoisson} that $m_{(p,\ell)}$ denotes the moment of order $p$ of a Poisson random variable of mean $\ell$.
\begin{theorem}\label{teow1}
Let $X^j$, $j=1,2$, be two Lévy processes with characteristics $(b_j,\sigma_j^2,\nu_j)$, $j=1,2$. For all $p\in[1,2]$, for all $\varepsilon\in [0,1]$ and all $t\geq 0$, the following estimate holds
 \begin{align*}
\Wi_p\big(X_t^1,X_t^2\big)&\leq \Big(t^2\big(b_1(\varepsilon)-b_2(\varepsilon)\big)^2+t\big(\sigma_1+\bar\sigma_1(\varepsilon)-\sigma_2-\bar\sigma_2(\varepsilon)\big)^2\Big)^{1/2} \\
&\quad +C\sum_{j=1}^2\min\Big(\sqrt t\bar \sigma_j(\varepsilon) ,\varepsilon\Big)+
\Wi_p\big(X_{t}^{1,B}(\varepsilon),X_t^{2,B}(\varepsilon)\big),
 \end{align*}
 for some constant $C$, only depending on $p$.
Introducing $L_t(\varepsilon):=t|\lambda_1(\varepsilon)-\lambda_2(\varepsilon)|$, we have
\begin{align*}
\Wi_p\big(X_{t}^{1,B}(\varepsilon),X_t^{2,B}(\varepsilon)\big)&\leq
\big((t\lambda_1(\varepsilon))^{1/p}+t\lambda_1(\varepsilon)\big)\Wi_p\big(Y_1^{(1)},Y_1^{(2)}\big)\\
&\quad + \big(L_t(\varepsilon)^{1/p}+L_t(\varepsilon)\big) \E\big[\big(Y_1^{(2)}\big)^p\big]^{1/p}.
 \end{align*}
 \end{theorem}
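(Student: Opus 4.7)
My approach is to apply the L\'evy--It\^o decomposition and strip the characteristics off one summand at a time, exploiting sub-additivity of $\Wi_p$ under convolution (Corollary \ref{cor:subadd}) and shift-invariance (Lemma \ref{lemma:proprieta}). For each $j=1,2$ I write $X_t^j=b_j(\varepsilon)t+\sigma_jW_t^{(j)}+X_t^{j,S}(\varepsilon)+X_t^{j,B}(\varepsilon)$ with the four summands mutually independent, and introduce an auxiliary independent Gaussian $G_j\sim\No(0,t\bar\sigma_j^2(\varepsilon))$ to serve as a Gaussian surrogate for the small-jump part.

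The first step inserts $G_j$ in place of $X_t^{j,S}(\varepsilon)$: sub-additivity together with Theorem \ref{teo:smalljumps} yields $\Wi_p\bigl(X_t^j,\,b_j(\varepsilon)t+\sigma_jW_t^{(j)}+G_j+X_t^{j,B}(\varepsilon)\bigr)\le C\min(\sqrt t\,\bar\sigma_j(\varepsilon),\varepsilon)$, so one triangle inequality delivers the $\sum_j C\min(\sqrt t\,\bar\sigma_j(\varepsilon),\varepsilon)$ contribution. A further application of sub-additivity then peels off the compound-Poisson piece and reduces the remaining problem to comparing two Gaussians $\No(b_j(\varepsilon)t,t(\sigma_j^2+\bar\sigma_j^2(\varepsilon)))$ for $j=1,2$; using $\Wi_p\le\Wi_2$ (valid for $p\in[1,2]$) together with Lemma \ref{lemma:gaussiane}, a coupling in which the diffusion Brownian motions and the Gaussianised small-jump parts of both processes are driven through the same scalar standard Gaussians makes the standard deviations combine additively and produces the target bound $\sqrt{t^2(b_1(\varepsilon)-b_2(\varepsilon))^2+t(\sigma_1+\bar\sigma_1(\varepsilon)-\sigma_2-\bar\sigma_2(\varepsilon))^2}$.

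For the big-jump distance I would write $X_t^{j,B}(\varepsilon)=\sum_{i=1}^{N_t^j}Y_i^{(j)}$ with $N_t^j\sim\mathrm{Poisson}(t\lambda_j(\varepsilon))$ independent of $(Y_i^{(j)})$ and apply Theorem \ref{teo:CPP}. The branch $(\E[(N_t^1)^p])^{1/p}\,\Wi_p(Y_1^{(1)},Y_1^{(2)})$ is handled using the Poisson-moment bound $m_{(p,\ell)}\le \ell+\ell^p$ for $p\in[1,2]$ (obtained by the H\"older step in the proof of Proposition \ref{prop:wpoisson}), giving the prefactor $(t\lambda_1(\varepsilon))^{1/p}+t\lambda_1(\varepsilon)$. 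The second branch $\Wi_p(N_t^1,N_t^2)\,\E[(Y_1^{(2)})^p]^{1/p}$ is handled by Proposition \ref{prop:wpoisson} directly: $\Wi_p(N_t^1,N_t^2)\le L_t(\varepsilon)^{1/p}+L_t(\varepsilon)$.

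The main obstacle is obtaining the cross-term $(\sigma_1+\bar\sigma_1(\varepsilon)-\sigma_2-\bar\sigma_2(\varepsilon))^2$ rather than the weaker $(\sigma_1-\sigma_2)^2+(\bar\sigma_1-\bar\sigma_2)^2$ that a naive independent-BM coupling would give: one must carefully couple the diffusive and the Gaussianised small-jump components of both processes so that their standard deviations add linearly, while still preserving the correct marginal laws at each step of the triangle inequality. The remaining steps are essentially bookkeeping.
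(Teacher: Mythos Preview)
Your overall approach is the paper's: L\'evy--It\^o decomposition, subadditivity to split off the big jumps, triangle inequality through Gaussian surrogates for the small jumps via Theorem~\ref{teo:smalljumps}, Lemma~\ref{lemma:gaussiane} for the Gaussian comparison, and Theorem~\ref{teo:CPP} with Proposition~\ref{prop:wpoisson} for the compound Poisson piece. The big-jump bookkeeping (the Poisson moment bound $m_{(p,\ell)}\le\ell+\ell^p$ and $(\ell+\ell^p)^{1/p}\le\ell^{1/p}+\ell$) is fine.

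Where your plan goes wrong is precisely the ``main obstacle'' you flag. Your proposed fix---driving $\sigma_jW_t^{(j)}$ and the surrogate $G_j$ through the \emph{same} standard Gaussian---produces $(\sigma_j+\bar\sigma_j(\varepsilon))\sqrt t\,Z$, whose law is $\No\bigl(0,t(\sigma_j+\bar\sigma_j(\varepsilon))^2\bigr)$ and not the required $\No\bigl(0,t(\sigma_j^2+\bar\sigma_j^2(\varepsilon))\bigr)$; the subadditivity step that replaced $X_t^{j,S}(\varepsilon)$ by $G_j$ needs $G_j$ independent of $\sigma_jW_t^{(j)}$, so you cannot later make them perfectly correlated without changing the marginal. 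With independence restored, Lemma~\ref{lemma:gaussiane} applied to $\No\bigl(tb_j(\varepsilon),t(\sigma_j^2+\bar\sigma_j^2(\varepsilon))\bigr)$ delivers
\[
\Bigl(t^2\bigl(b_1(\varepsilon)-b_2(\varepsilon)\bigr)^2+t\Bigl(\sqrt{\sigma_1^2+\bar\sigma_1^2(\varepsilon)}-\sqrt{\sigma_2^2+\bar\sigma_2^2(\varepsilon)}\Bigr)^2\Bigr)^{1/2},
\]
which is exactly what the paper's proof (and Corollary~\ref{cor:smalljumps}) actually produces. The linear expression $(\sigma_1+\bar\sigma_1(\varepsilon)-\sigma_2-\bar\sigma_2(\varepsilon))^2$ appearing in the statement of Theorem~\ref{teow1} neither dominates nor is dominated by the square-root form in general (try $\sigma_1=\bar\sigma_1=1,\ \sigma_2=2,\ \bar\sigma_2=0$ for one direction and $\sigma_1=3,\ \bar\sigma_1=4,\ \sigma_2=5,\ \bar\sigma_2=0$ for the other), so it appears to be a slip in the statement rather than something the argument establishes. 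Do not try to manufacture the linear form; write the bound with $\sqrt{\sigma_j^2+\bar\sigma_j^2(\varepsilon)}$, which is what the method genuinely yields.
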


\begin{proof}[Proof of Theorem \ref{teow1}]
By some abuse of notation let $\No(\mu,\sigma^2)$ denote a random variable with this distribution. Then, thanks to the Lévy-Itô decomposition, we have
 $$X_t^i=\No(t b_i(\varepsilon),t\sigma_i^2)+X_{t}^{i,S}(\varepsilon)+X_{t}^{i,B}(\varepsilon)$$
with independent summands.
 Hence, by subadditivity we get
 \begin{align*}
 \Wi_p\big(X^1_t,X^2_t\big)&\leq \Wi_p\big(\No(t b_1(\varepsilon),t\sigma_1^2)+X_{t}^{1,S}(\varepsilon),\No(t b_2(\varepsilon), t\sigma_2^2)+X_{t}^{2,S}(\varepsilon)\big)
 \\ &\quad+\Wi_p\big(X_{t}^{1,B}(\varepsilon), X_{t}^{2,B}(\varepsilon)\big).
 \end{align*}
Observe that
\begin{align*}
&\Wi_p\big(\No(t b_1(\varepsilon),t\sigma_1^2)+X_{t}^{1,S}(\varepsilon), \No(t b_2(\varepsilon),t\sigma_2^2)+
X_{t}^{2,S}(\varepsilon)\big)\\
&\leq \Wi_p\big(\No(t b_1(\varepsilon),t\sigma_1^2)+X_t^{1,S}(\varepsilon), \No(t b_1(\varepsilon),t(\sigma_1^2+\bar\sigma_1^2(\varepsilon)))\big)\\
&\quad + \Wi_p\big(\No(tb_2(\varepsilon),t\sigma_2^2)+X_{t}^{2,S}(\varepsilon), \No(t b_2(\varepsilon),t(\sigma_2^2+
\bar\sigma_2^2(\varepsilon)))\big)\\
&\quad +\Wi_p\big(\No(t b_1(\varepsilon),t(\sigma_1^2+\bar\sigma_2^2(\varepsilon))\big), \No(t b_2(\varepsilon),
t(\sigma_2^2+\bar\sigma_2^2(\varepsilon)))\big)\\
&\leq \Wi_p\big(X_{t}^{1,S}(\varepsilon),\No(0,t\sigma_1^2(\varepsilon))\big)
+\Wi_p\big(X_{t}^{2,S}(\varepsilon),\No(0,t\bar\sigma_2^2(\varepsilon))\big)\\
&\quad +\Wi_p\big(\No(t b_1(\varepsilon),t(\sigma_1^2+\bar\sigma_1^2(\varepsilon))),
 \No(t b_2(\varepsilon),t(\sigma_2^2+\bar\sigma_2^2(\varepsilon)))\big),
\end{align*}
where in the second inequality we used again Lemma \ref{lemma:proprieta}.
An application of Theorem \ref{teo:smalljumps} together with Point (2) in Lemma \ref{lemma:w1} and Lemma \ref{lemma:gaussiane} allows us to bound
$$\Wi_p\big(\No(t b_1(\varepsilon),t\sigma_1^2)+X_{t}^{1,S}(\varepsilon), \No(t b_2(\varepsilon),t\sigma_2^2)
+X_{t}^{2,S}(\varepsilon)\big)$$
by the quantity
 $$\Big(t^2\big(b_1(\varepsilon)-b_2(\varepsilon)\big)^2
 +t\big(\sigma_1+\bar\sigma_1(\varepsilon)-\sigma_2-\bar\sigma_2(\varepsilon)\big)^2\Big)^{1/2}
 +C\sum_{j=1}^2\min\Big(\sqrt t\bar \sigma_j(\varepsilon) ,\varepsilon\Big),$$
  for some constant $C$ only depending on $p$.
  Finally,  $\Wi_p\big(X_{t}^{1,B}(\varepsilon), X_{t}^{2,B}(\varepsilon)\big)$ is bounded by means of Theorem \ref{teo:CPP} and Proposition \ref{prop:wpoisson}.
 \end{proof}

We now address the problem of how to compute the Wasserstein distance  between $n$ given increments of two Lévy processes.
To that end, fix a time span $T>0$,
a sample size $n\in\N$ and consider the sample $(X_{kT/n}^1-X_{(k-1)T/n}^1,X_{kT/n}^2-X_{(k-1)T/n}^2)_{k=1}^n$. From Lemma \ref{property:tensorization} we know that
we can measure the distance between the random vectors $(X_{kT/n}^1-X_{(k-1)T/n}^1)_{k=1}^n$ and
$(X_{kT/n}^2-X_{(k-1)T/n}^2)_{k=1}^n$ in terms of the Wasserstein distance between the marginals. This observation combined with Theorem \ref{teow1}
allows us to obtain an upper bound for the Wasserstein distance of order $p$ between the increments of these Lévy processes.

\begin{corollary}\label{res:tensorizationlevy}
 Let $X^j$, $j=1,2$, be two Lévy processes with characteristics $(b_j,\sigma_j^2,\nu_j)$, $j=1,2$. Then, with respect to the $\ell^r$-metric on $\R^n$ given by $d(x,y):=\big(\sum_{i=1}^N|x_i-y_i|^r\big)^{1/r}$, $r \geq 1$, for all $p\in[1,2]$,  $\varepsilon\geq 0$, $T>0$, $n\in\N$ we have
 \begin{align*}
  \Wi_p\Big(&(X_{kT/n}^1-X_{(k-1)T/n}^1)_{k=1}^n,(X_{kT/n}^2-X_{(k-1)T/n}^2)_{k=1}^n\Big)\\
  &\leq T n^{\frac{1}{r}-1} \big|b_1(\varepsilon)-b_2(\varepsilon)\big|
    + T^{1/2} n^{\frac{1}{r}-\frac12}\big|\sigma_1+\bar\sigma_1(\varepsilon)-\sigma_2-\bar\sigma_2(\varepsilon)\big|\\
&\quad  +
 C\sum_{j=1}^2\min\big(T^{1/2}n^{\frac{1}{r}-\frac12}\bar\sigma_j(\varepsilon),n^{\frac{1}{r}}\varepsilon\big)
 +n^{\frac{1}{r}}\Wi_p\big(X_{T/n}^{1,B}(\varepsilon),X_{T/n}^{2,B}(\varepsilon)\big),
 \end{align*}
 where $C$ is a constant depending only on $p$. The term $\Wi_p(X_{T/n}^{1,B}(\varepsilon),X_{T/n}^{2,B}(\varepsilon))$ can be bounded
 as in Theorem \ref{teo:CPP} with $t=T/n$.
\end{corollary}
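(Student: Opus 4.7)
The plan is to reduce the $n$-dimensional bound to the one-dimensional estimate of Theorem \ref{teow1} via the tensorization property of $\Wi_p$. Since a Lévy process has independent and stationary increments, for each $j \in \{1,2\}$ the coordinates of the random vector $(X_{kT/n}^j - X_{(k-1)T/n}^j)_{k=1}^n$ are i.i.d., each distributed as $X_{T/n}^j$. Consequently the joint law is a product measure whose $n$ factors are identical, and the sharper inequality of Lemma \ref{property:tensorization} for the i.i.d.\ case yields
$$\Wi_p\big((X_{kT/n}^1 - X_{(k-1)T/n}^1)_{k=1}^n,(X_{kT/n}^2 - X_{(k-1)T/n}^2)_{k=1}^n\big) \leq n^{1/r}\,\Wi_p\big(X_{T/n}^1, X_{T/n}^2\big).$$

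Next I apply Theorem \ref{teow1} to the one-dimensional distance on the right-hand side with $t = T/n$. The Gaussian-component term
$$\big(t^2(b_1(\varepsilon)-b_2(\varepsilon))^2 + t(\sigma_1+\bar\sigma_1(\varepsilon)-\sigma_2-\bar\sigma_2(\varepsilon))^2\big)^{1/2}$$
is split into drift and volatility contributions via the elementary inequality $\sqrt{a^2+b^2}\leq |a|+|b|$, producing $(T/n)|b_1(\varepsilon)-b_2(\varepsilon)| + \sqrt{T/n}\,|\sigma_1+\bar\sigma_1(\varepsilon)-\sigma_2-\bar\sigma_2(\varepsilon)|$. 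The small-jump contribution reads $C\sum_{j=1}^2\min(\sqrt{T/n}\,\bar\sigma_j(\varepsilon),\varepsilon)$, and the big-jump contribution remains $\Wi_p(X_{T/n}^{1,B}(\varepsilon),X_{T/n}^{2,B}(\varepsilon))$, which by the final statement of the corollary is to be bounded separately via Theorem \ref{teo:CPP} with $t=T/n$.

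Finally I multiply each of the four summands by the tensorization factor $n^{1/r}$ and simplify the exponents: $n^{1/r}\cdot(T/n) = T n^{1/r-1}$, $n^{1/r}\sqrt{T/n} = T^{1/2} n^{1/r-1/2}$, and $n^{1/r}\min(\sqrt{T/n}\,\bar\sigma_j(\varepsilon),\varepsilon) = \min(T^{1/2} n^{1/r-1/2}\bar\sigma_j(\varepsilon), n^{1/r}\varepsilon)$. These expressions match the claimed bound term by term. There is no genuine obstacle: the corollary is a pure assembly of Lemma \ref{property:tensorization} and Theorem \ref{teow1}, and the only care required is in tracking the exponents of $n$ and $T$ through the multiplication and in noting that splitting the Gaussian term via $\sqrt{a^2+b^2}\leq |a|+|b|$ is what allows the drift and volatility discrepancies to be reported as separate summands in the final inequality.
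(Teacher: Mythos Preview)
Your proof is correct and follows exactly the approach the paper intends: apply the i.i.d.\ case of Lemma \ref{property:tensorization} to obtain the factor $n^{1/r}$, invoke Theorem \ref{teow1} at $t=T/n$, split the Gaussian term via $\sqrt{a^2+b^2}\le |a|+|b|$, and track the exponents. There is nothing to add; the paper itself presents this corollary as a direct combination of those two results.
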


In the Euclidean case $r=2$ we see that in the bound for the Wasserstein distance the drift part disappears as $n\to\infty$ ($T$ fixed), while the Gaussian part remains invariant and the Gaussian approximation of small jumps gives an error of order $\min(\bar\sigma_j(\varepsilon),n^{1/2}\varepsilon)$. The bound on the larger jumps scales as $n^{1/2}(T/n+(T/n)^{1/2})$ (for $p=1$ even as $T/n^{1/2}$) so that the entire bound on the Wasserstein distance remains bounded as $n\to\infty$.

 \subsection{Lower bounds}

Applying the general lower bound established in Proposition \ref{prop:toscani} to Lévy processes, we get the following result:

\begin{corollary}\label{cor:lb}
Let $W$ be a Brownian motion and $X^\varepsilon$ be a pure jump Lévy process with jumps of absolute value less than $\varepsilon\in(0,1]$. This means that $X^\varepsilon$ has L\'evy triplet $(0,0,\nu_\varepsilon)$, $\text{supp}(\nu_\varepsilon)\subset[-\varepsilon,\varepsilon]$ and
 characteristic function
$$\varphi^\varepsilon_t(u)=\E[e^{iuX_t^\varepsilon}]=\exp\Big(t \int_{-\varepsilon}^\varepsilon \big(e^{iux}-1-iux\big)\nu_\varepsilon(dx)\Big).$$
Let $\bar\sigma^2(\varepsilon)=\int x^2\nu_\varepsilon(dx)$. Then for $p\ge 1$
\begin{align*}
\Wi_p(X_t^\varepsilon,\bar\sigma(\varepsilon)W_t) &\geq  \sup_{u\in\R}\frac{\Big|\exp \Big(t\int\big(e^{iux}-1-iux\big)\nu_\varepsilon(dx)\Big)-\exp \Big(t\int \frac{(iux)^2}{2} \nu_\varepsilon(dx)\Big)\Big|}{\sqrt 2|u|}.
 \end{align*}
\end{corollary}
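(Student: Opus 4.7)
The plan is to apply Proposition \ref{prop:toscani} directly, after identifying the two characteristic functions in matching form. First, I would check that both laws belong to $\mathcal P_p(\R)$ for every $p\ge 1$: this is immediate because $\bar\sigma(\varepsilon)W_t$ is Gaussian, and $X_t^\varepsilon$ is a pure-jump Lévy process with Lévy measure supported in $[-\varepsilon,\varepsilon]$, so $\nu_\varepsilon$ is finite with compact support and $X_t^\varepsilon$ admits exponential moments, hence all absolute moments are finite. Thus Proposition \ref{prop:toscani} applies and yields
\[
\Wi_p\big(X_t^\varepsilon,\bar\sigma(\varepsilon)W_t\big)\ \ge\ \frac{1}{\sqrt 2}\,T_1\big(\Li(X_t^\varepsilon),\Li(\bar\sigma(\varepsilon)W_t)\big)
\ =\ \frac{1}{\sqrt 2}\sup_{u\in\R\setminus\{0\}}\frac{|\varphi_1(u)-\varphi_2(u)|}{|u|}.
\]

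Second, I would rewrite the two characteristic functions so that the integrand in the corollary's statement appears explicitly. By hypothesis $\varphi_1(u)=\varphi_t^\varepsilon(u)=\exp\bigl(t\int_{-\varepsilon}^\varepsilon(e^{iux}-1-iux)\nu_\varepsilon(dx)\bigr)$. On the other hand $\bar\sigma(\varepsilon)W_t\sim\No(0,t\bar\sigma^2(\varepsilon))$ has characteristic function
\[
\varphi_2(u)=\exp\Bigl(-\tfrac{u^2 t\bar\sigma^2(\varepsilon)}{2}\Bigr)=\exp\Bigl(t\int_{-\varepsilon}^\varepsilon\tfrac{(iux)^2}{2}\nu_\varepsilon(dx)\Bigr),
\]
using that $\bar\sigma^2(\varepsilon)=\int x^2\,\nu_\varepsilon(dx)$. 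Substituting these expressions into the bound above reproduces exactly the right-hand side of the corollary (the supremum over $u\in\R$ being understood as the obvious extension by $0$ at $u=0$, where numerator and denominator both vanish).

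There is no real obstacle; the only small point worth double-checking is the integrability condition so that Proposition \ref{prop:toscani} is applicable, which is handled by the compact support of $\nu_\varepsilon$.
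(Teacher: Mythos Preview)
Your proposal is correct and follows exactly the paper's approach: the corollary is stated as a direct consequence of Proposition \ref{prop:toscani}, and you carry this out explicitly by identifying the two characteristic functions. One small slip: $\nu_\varepsilon$ need not be a \emph{finite} measure (think of the truncated $\alpha$-stable case in Example \ref{ex:stable}); what matters for the moment condition is only that $\text{supp}(\nu_\varepsilon)\subset[-\varepsilon,\varepsilon]$, which already guarantees $\int_{|x|\ge 1}|x|^p\,\nu_\varepsilon(dx)=0<\infty$ and hence all absolute moments of $X_t^\varepsilon$.
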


By the bound given in Proposition \ref{prop:toscani} we usually do not lose in approximation order as the following lower bound examples demonstrate.

Let us start with the general case that at $\varepsilon=1$ we have a standardised pure jump process $X^1$ with $\E[X^1_t]=0$, $\var[X^1_t]=t$ as for Brownian motion, which means $\int x\nu_1(dx)=0$, $\int x^2\nu_1(dx)=1$. Then rescaling as in Donsker's Theorem we consider $X_t^\varepsilon:=\varepsilon X^1_{\varepsilon^{-2}t}$ such that $\nu_\varepsilon(B)=\varepsilon^{-2}\nu_1(\varepsilon^{-1}B)$ for Borel sets $B$, $\bar\sigma^2(\varepsilon)=1$  and
$$\varphi_t^\varepsilon(u)=\exp\Big(t\varepsilon^{-2} \int_{-1}^1\big(e^{i\varepsilon ux}-1\big)\nu_1(dx)\Big).$$
Let us further assume that $q_3:=\int x^3\nu_1(dx)\not=0$. Then, taking into account the first two moments, a Taylor expansion yields
\[ t\varepsilon^{-2}\int_{-1}^1\big(e^{i\varepsilon ux}-1\big)\nu_1(dx)=-\frac{tu^2}{2}-\frac{it\varepsilon u^3}{3!}q_3+{ O}(t\varepsilon^2 u^4).
\]
For $t>\varepsilon^2$ we thus obtain at $u_0=t^{-1/2}$
\[ \frac{|\varphi_\varepsilon(u_0)-e^{-tu_0^2/2}|}{u_0}=t^{1/2}e^{-1/2}\Big|e^{-iq_3\varepsilon t^{-1/2}/3!+{ O}(\varepsilon^2/t)}-1\Big| =\varepsilon\Big(\frac{q_3}{\sqrt e 3!}+{ O}(\varepsilon/\sqrt{t})\Big).
\]
Hence, by Corollary \ref{cor:lb} there are constants $M>0$ and $c>0$ such that for all $t\ge M\varepsilon^2$
\[ \Wi_p(X_t^\varepsilon,W_t) \geq c\varepsilon.\]
For $t\le M\varepsilon^2$ we  obtain at $u_0=(8M\lambda_1/t)^{1/2}$ with $\lambda_1=\nu_1(\R)$
\begin{align*}
 \frac{|\varphi_t^\varepsilon(u_0)-e^{-tu_0^2/2}|}{u_0}&\ge \frac{t^{1/2}}{(8M\lambda_1)^{1/2}}\Big|e^{t\varepsilon^{-2}\int(\cos(\varepsilon u_0x)-1)\nu_1(dx)}-e^{-4M\lambda_1}\Big|
\\ &\ge \frac{e^{-2M\lambda_1}-e^{-4M\lambda_1}}{(8M\lambda_1)^{1/2}}t^{1/2}.
\end{align*}
We  conclude
\[\forall t>0,\,\varepsilon\in(0,1]:\;\Wi_p(X_t^\varepsilon,W_t)\geq K\min(\sqrt t,\varepsilon)\]
for some positive constant $K$, depending on $\nu_1$, but independent of $t$ and $\varepsilon$, whenever $q_3=\int x^3\nu_1(dx)\not=0$.

Even for symmetric L\'evy measures, not inducing skewness of the distribution, we can attain the order $\min(\sqrt t,\varepsilon)$. If we consider $\nu_1=\frac12(\delta_{-1}+\delta_1)$, $\nu_\varepsilon=\frac{\delta_{-\varepsilon}+\delta_{\varepsilon}}{2\varepsilon^2}$, we arrive at the same conclusion by computing the distance $T_1$ between $Y_t(\varepsilon)$ and $\No(0,t)$ as in Remark \ref{rmk:ex}:
 \begin{align*}
  T_1\big(\Li(Y_t(\varepsilon)),\No(0,t)\big)
  =\sup_{u\in\R}\bigg|\frac{\exp\Big(-t\Big(\frac{1-\cos(u\varepsilon)}{\varepsilon^2}\Big)\Big)-\exp\Big(-\frac{tu^2}{2}\Big)}{u}\bigg|.
 \end{align*}
If $t\geq  \varepsilon^2$, we choose $u=\frac{2\pi}{\varepsilon}$ and get
 \begin{align*}
  T_1\big(\Li(Y_t(\varepsilon)),\No(0,t)\big)\geq \bigg|\frac{\varepsilon}{2\pi}\Big(1-\exp\Big(-\frac{2\pi^2t}{\varepsilon^2}\Big)\Big)\bigg|\geq \Big(\frac{1-e^{-2\pi^2}}{2\pi}\Big)\varepsilon.
 \end{align*}
If $t<\varepsilon^2$, the choice $u=\frac{3}{\sqrt t}$ gives
$$  T_1\big(\Li(Y_t(\varepsilon)),\No(0,t)\big)\geq \sqrt t\frac{\Big(e^{-2}-e^{-\frac{9}{2}}\Big)}{3}.$$

We conclude that also in this case $\Wi_1(\Li(Y_t(\varepsilon)),\No(0,t))\geq K\min(\sqrt t,\varepsilon)$ holds for some positive constant $K$, independent of $t$ and $\varepsilon$.

\section{Total variation bounds via convolution}\label{sec:conv}

\subsection{Notation and some useful properties}

Let $(\X,\mathscr F)$ be a measurable space and let $\mu$ and $\nu$ be two probability measures on $(\X,\mathscr F)$.
\begin{definition}
 The \emph{total variation distance} between $\mu$ and $\nu$ is defined as
 $$\|\mu-\nu\|_{TV}=\sup_{A\in\F}\big|\mu(A)-\nu(A)\big|.$$
\end{definition}

\begin{lemma}\label{property:tv}
 The total variation distance has the following properties.
 \begin{enumerate}
  \item $\|\mu-\nu\|_{TV}=\frac{1}{2}\sup_{\|\Psi\|_{\infty}\leq 1}\big|\int_\X \Psi(x)(\mu-\nu)(dx)\big|$.
  \item \label{pro:couplingtv} $\|\mu-\nu\|_{TV}=\inf\big(\p(X\neq Y):\Li(X)=\mu,\ \Li(Y)=\nu\big)$.
 \end{enumerate}
\end{lemma}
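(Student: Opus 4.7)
The plan is to prove the two identities using the Hahn--Jordan decomposition of the signed measure $\mu-\nu$ and the standard maximal coupling construction.

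For part (1), I would first invoke the Hahn decomposition to split $\X$ into disjoint sets $A^+$ and $A^-$ on which $\mu\ge\nu$ and $\mu<\nu$ respectively, so that $\mu-\nu$ decomposes as the difference of two mutually singular non-negative measures supported on $A^+$ and $A^-$. Since $\mu(\X)=\nu(\X)=1$, the positive and negative parts have equal total mass, and the sup over sets $A$ in the definition is attained at $A=A^+$, giving $\|\mu-\nu\|_{TV}=(\mu-\nu)^+(\X)=(\mu-\nu)^-(\X)$. Then for any bounded measurable $\Psi$ with $\|\Psi\|_\infty\le 1$, splitting the integral according to $A^+$ and $A^-$ yields
$$\Big|\int \Psi\,d(\mu-\nu)\Big|\le (\mu-\nu)^+(\X)+(\mu-\nu)^-(\X)=2\|\mu-\nu\|_{TV},$$
with equality attained by the explicit test function $\Psi^*=\I_{A^+}-\I_{A^-}$.

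For part (2), the easy direction is that any coupling $(X,Y)$ of $\mu$ and $\nu$ satisfies, for every measurable $A$,
$$\mu(A)-\nu(A)=\p(X\in A)-\p(Y\in A)\le \p(X\in A,\,Y\notin A)\le \p(X\neq Y),$$
and taking the supremum over $A$ bounds $\|\mu-\nu\|_{TV}$ from above by the infimum over couplings. The matching lower bound requires constructing a maximal coupling: writing $\alpha=\|\mu-\nu\|_{TV}$, I would choose a common dominating $\sigma$-finite measure, define the overlap measure $\mu\wedge\nu$ via the pointwise minimum of densities so that $(\mu\wedge\nu)(\X)=1-\alpha$, and describe the coupling as follows. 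Toss a biased coin: with probability $1-\alpha$ draw $Z\sim(\mu\wedge\nu)/(1-\alpha)$ and set $X=Y=Z$; with probability $\alpha$ draw $X$ and $Y$ independently from $(\mu-\nu)^+/\alpha$ and $(\nu-\mu)^+/\alpha$ respectively. A short check confirms that the marginals are $\mu$ and $\nu$ and that $\p(X\neq Y)\le\alpha$, completing the proof.

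Neither part is really difficult; the main point to be careful about is the verification of the marginals of the maximal coupling, where one must combine the densities of $\mu\wedge\nu$ and $(\mu-\nu)^+$ correctly to recover $d\mu$, and symmetrically for $\nu$. I would also note in passing that the equivalent formulation $\|\mu-\nu\|_{TV}=\tfrac12\int |d\mu/d\lambda-d\nu/d\lambda|\,d\lambda$ follows from the same Hahn--Jordan argument, since it is what underpins both the sup-over-$\Psi$ formula and the computation $(\mu\wedge\nu)(\X)=1-\|\mu-\nu\|_{TV}$.
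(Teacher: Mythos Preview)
Your proof proposal is correct and follows the standard textbook route (Hahn--Jordan decomposition for part (1), maximal coupling for part (2)). Note, however, that the paper does not actually supply a proof of this lemma: it is stated as a collection of well-known properties and used without justification. So there is no ``paper's own proof'' to compare against; your argument simply fills in the classical details that the authors take for granted.
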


\begin{remark}
 Let $\X$ be a discrete set, equipped with the Hamming metric $d(x,y)=\I_{x\neq y}$. In this case, thanks to Property \ref{pro:couplingtv}. above,
 for any probability measures $\mu$ and $\nu$ on $\X$ we have
 $$\Wi_1(\mu,\nu)=\|\mu-\nu\|_{TV}.$$
\end{remark}

The total variation distance does not always bound the Wasserstein distance, because the latter is also influenced by large distances.
However, thanks to the following classical result, one can get some control on $\Wi_p$ given a bound on the total variation distance.
\begin{theorem}\label{teo:villani}[See \cite{villani09}, Theorem 6.13]
 Let $\mu$ and $\nu$ be two probability measures on a Polish space $(\X,d)$. Let $p\in[1,\infty)$ and $x_0\in\X$. Then
 $$\Wi_p(\mu,\nu)\leq 2^{\frac{1}{p'}}\bigg(\int d(x_0,x)^p|\mu-\nu|(dx)\bigg)^{\frac{1}{p}},\quad \frac{1}{p}+\frac{1}{p'}=1.$$
 In particular, if $p=1$ and the diameter of $\X$ is bounded by $D$, then
 $$\Wi_1(\mu,\nu)\leq 2 D\|\mu-\nu\|_{TV}.$$
\end{theorem}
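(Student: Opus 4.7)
The plan is to construct a single explicit coupling of $\mu$ and $\nu$ that simultaneously realises the total variation distance and admits a clean bound on its $p$-Wasserstein cost. Writing the Jordan decomposition $\mu-\nu = (\mu-\nu)_+ - (\mu-\nu)_-$, both parts are mutually singular finite measures of mass $\|\mu-\nu\|_{TV}$, and $\mu\wedge\nu := \mu - (\mu-\nu)_+$ is a common sub-probability measure of mass $1-\|\mu-\nu\|_{TV}$. Flip a coin: with probability $1-\|\mu-\nu\|_{TV}$ sample $X=Y$ from the normalised $\mu\wedge\nu$, otherwise sample $X$ and $Y$ independently from the normalised $(\mu-\nu)_+$ and $(\mu-\nu)_-$ respectively. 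A short check will confirm that the marginals are $\mu$ and $\nu$ and that $\p(X\neq Y) = \|\mu-\nu\|_{TV}$, matching Property~(2) of Lemma \ref{property:tv}.

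For this coupling, $d(X,Y)^p$ vanishes on $\{X=Y\}$, so
\[
\E[d(X,Y)^p] = \E\big[d(X,Y)^p\,\I_{\{X\neq Y\}}\big].
\]
Applying the triangle inequality centred at $x_0$ followed by the convexity bound $(a+b)^p \leq 2^{p-1}(a^p+b^p)$, and using that conditionally on $\{X\neq Y\}$ the laws of $X$ and $Y$ are proportional to $(\mu-\nu)_+$ and $(\mu-\nu)_-$, I obtain
\[
\E[d(X,Y)^p] \leq 2^{p-1}\int d(x_0,z)^p\,|\mu-\nu|(dz),
\]
since $|\mu-\nu| = (\mu-\nu)_+ + (\mu-\nu)_-$. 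Taking the $p$-th root and using $\Wi_p(\mu,\nu)^p \leq \E[d(X,Y)^p]$ then delivers the prefactor $2^{(p-1)/p} = 2^{1/p'}$.

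For the corollary with $p=1$ the prefactor equals $1$, and bounding $d(x_0,x)\leq D$ uniformly together with the standard identity $|\mu-\nu|(\X) = 2\|\mu-\nu\|_{TV}$ produces the factor $2D$. The only point requiring mild care is the existence of the coin-flip coupling on a general Polish space, which reduces to the regular Hahn decomposition for signed Borel measures and to the routine construction of $(X,Y)$ on a product probability space; conceptually, the main obstacle is really just to recognise that the maximal-coupling construction is also well-adapted to Wasserstein costs, because forcing $X=Y$ on the overlap kills every contribution from the common mass and leaves only an integral against $|\mu-\nu|$.
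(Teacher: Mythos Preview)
Your argument is correct. The paper itself does not supply a proof of this statement; it merely cites Villani's book, so there is nothing to compare against directly. Your construction is the standard maximal-coupling proof (and is essentially the argument Villani gives in the cited reference): build the optimal total-variation coupling via the Hahn--Jordan decomposition, observe that the Wasserstein cost vanishes on the common mass, and control the remaining part by the triangle inequality through $x_0$ together with the convexity bound $(a+b)^p\le 2^{p-1}(a^p+b^p)$. The marginal check and the identification $2^{(p-1)/p}=2^{1/p'}$ are correct, as is the $p=1$ specialisation using $|\mu-\nu|(\X)=2\|\mu-\nu\|_{TV}$.

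Two cosmetic points you may wish to tidy: in the degenerate cases $\|\mu-\nu\|_{TV}\in\{0,1\}$ one of the normalisations is by zero, but the conclusion is trivial there; and the sentence asserting $\p(X\neq Y)=\|\mu-\nu\|_{TV}$ is true (by mutual singularity of $(\mu-\nu)_\pm$) but is not actually needed for the Wasserstein bound, since you only use that $d(X,Y)=0$ on the first branch.
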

In Proposition \ref{prop:TV} we will show an inequality that can be thought of as an inverse of the one above. Namely, the total variation distance between  two measures convolved with a common measure can be bounded by a multiple of the Wasserstein distance of order $1$.

\subsection{Wasserstein distance of order $1$ and total variation distance}

Recall that a real function $g$ is of \emph{bounded variation} if its total variation norm is finite, i.e.
$$\|g\|_{BV}=\sup_{ P \in\mathscr P}\sum_{i=0}^{n_P-1}|g(x_{i+1})-g(x_i)|<\infty,$$
where the supremum is taken over the set $\mathscr{P}=\{P=(x_0,\dots,x_{n_P}):x_0<x_1<\dots<x_{n_P}\}$ of all finite ordered subsets of $\R$. We will denote by $BV(\R)$ the space of functions of bounded variation.

We now state a lemma that will be useful in the following.

\begin{lemma}\label{lemma:bv}
 Let $g$ be a real function of bounded variation and $\mathcal{F} \subseteq \{\phi \colon \R \to \R  : \|\phi\|_\infty \leq 1\} \cap L^{1}(\R)$ a functional class. Suppose that for any $\phi\in \mathcal{F}$
 $$h_\phi(t)=\int_\R \phi(y)\Big(g(t-y)-\lim_{x\to -\infty} g(x)\Big)dy$$
is well defined. Then,
 \begin{equation}\label{eq:Lipnorm}
  \sup_{\phi \in \mathcal{F}}\|h_\phi\|_{Lip}\leq \|g\|_{BV}.
 \end{equation}
 
\end{lemma}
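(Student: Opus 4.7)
The plan is to represent $g$ via its associated signed measure coming from its bounded variation structure and then use Fubini to rewrite $h_\phi$ as a convolution-type integral against this measure, for which the Lipschitz bound becomes transparent.

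First, I would invoke the standard BV representation: after passing to a suitable (say right-continuous) version, there exists a finite signed Borel measure $\mu_g$ on $\R$ with $|\mu_g|(\R)=\|g\|_{BV}$ such that
\begin{equation*}
g(x)-\lim_{y\to-\infty}g(y)=\mu_g\bigl((-\infty,x]\bigr)=\int_{\R}\I_{u\le x}\,d\mu_g(u).
\end{equation*}
Substituting $x=t-y$ into the definition of $h_\phi$ and observing that $|\phi(y)\I_{u\le t-y}|\le|\phi(y)|\in L^1(\R)$ while $|\mu_g|$ is finite, I can apply Fubini to obtain
\begin{equation*}
h_\phi(t)=\int_{\R}\phi(y)\int_{\R}\I_{y\le t-u}\,d\mu_g(u)\,dy=\int_{\R}\Phi(t-u)\,d\mu_g(u),
\end{equation*}
where $\Phi(x):=\int_{-\infty}^{x}\phi(y)\,dy$. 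This integral is finite and well defined since $\phi\in L^1(\R)$ ensures $\Phi$ is bounded by $\|\phi\|_{L^1}$.

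Next, because $\|\phi\|_\infty\le 1$, the antiderivative $\Phi$ is $1$-Lipschitz: $|\Phi(a)-\Phi(b)|\le|a-b|$. Therefore, for any $s,t\in\R$,
\begin{equation*}
|h_\phi(t)-h_\phi(s)|\le\int_{\R}|\Phi(t-u)-\Phi(s-u)|\,d|\mu_g|(u)\le|t-s|\cdot|\mu_g|(\R)=|t-s|\cdot\|g\|_{BV}.
\end{equation*}
Taking the supremum over $\phi\in\mathcal{F}$ yields \eqref{eq:Lipnorm}.

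The only delicate point I foresee is the BV representation and the application of Fubini: one must verify the choice of a right-continuous representative of $g$ (irrelevant for the integral value against Lebesgue measure in $y$) and confirm integrability of $(y,u)\mapsto\phi(y)\I_{y\le t-u}$ with respect to $\mathrm{Leb}\otimes|\mu_g|$, which follows from $\phi\in L^1(\R)$ and finiteness of $|\mu_g|$. Everything else is essentially routine once the signed-measure representation is in place.
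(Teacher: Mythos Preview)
Your proof is correct and follows essentially the same route as the paper: both arguments pass to the signed measure $\mu_g$ associated with the right-continuous version of $g$, invoke Fubini, and exploit $|\mu_g|(\R)=\|g\|_{BV}$ together with $\|\phi\|_\infty\le 1$. The only cosmetic difference is that the paper identifies $\int\phi(t-y)\,\mu_g(dy)$ as the weak derivative of $h_\phi$ and bounds its sup norm, whereas you rewrite $h_\phi(t)=\int\Phi(t-u)\,d\mu_g(u)$ with the $1$-Lipschitz antiderivative $\Phi$ and estimate increments directly; these are two sides of the same computation.
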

\begin{proof}
The proof is an easy consequence of the following classical results on Lebesgue-Stieltjes measures:
\begin{enumerate}
\item For every right-continuous function $g\colon \R\to\R$ of bounded variation there exists a unique signed measure $\mu$ such that
\begin{equation}\label{eq:signedmeasure}
\mu(]-\infty,x])=g(x)-\lim_{y\to-\infty} g(y).
\end{equation}
\item Let $\phi\in L^\infty(\R)$ and let $g\in BV(\R)$ be a right-continuous function. Let $\mu$ be the finite signed measure associated to $g$ as in \eqref{eq:signedmeasure}.Then $\int \phi(t-y)\mu(dy)$ is well defined, measurable in $t\in\R$ and bounded in absolute value by $\|\phi\|_\infty\|g\|_{BV}$. \label{eq:2}
\end{enumerate}
More precisely, let $\mu$ be the finite signed measure associated to $g$. It is enough to prove that $\int \phi(t-y)\mu(dy)$ is the weak derivative of $h_\phi$ since then, using Point \ref{eq:2}. above, we deduce that $\|h_\phi\|_{Lip}=\|\int \phi(\cdot-y)\mu(dy)\|_\infty\leq \|\phi\|_\infty\|g\|_{BV}$ and hence \eqref{eq:Lipnorm}. The claim above follows by Fubini's Theorem: for all $T>0$ \begin{align*}
\int_0^T \int \phi(t-y)\mu(dy)dt &= \int\int \I_{[0,T]}(u+y)\phi(u)du\,\mu(dy)\\
&=\int \phi(u)(g(T-u)-g(-u))du\\
&=\int \phi(u)\Big(g(t-u)-\lim_{x\to-\infty} g(x)\Big)du\Big|_{t=0}^T.
\end{align*}
Hence, $ \int \phi(t-y)\mu(dy)$ is the weak derivative of $\int \phi(u)(g(t-u)-\lim_{x\to-\infty} g(x))du$ as desired.
\end{proof}

\begin{proposition}\label{prop:TV}
 Let $\mu$ and $\nu$ be two measures on $(\R,\B(\R))$ and $G$ be an absolutely continuous measure with respect to the Lebesgue measure admitting
 a density $g$ of  bounded variation.
Then the total variation distance between the convolution measures $\mu*G$ and $\nu*G$ is bounded by
$$\|\mu*G-\nu*G\|_{TV}\leq \frac{\|g\|_{BV}}{2} \Wi_1(\mu,\nu).$$
\end{proposition}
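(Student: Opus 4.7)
The plan is to dualise the total variation norm, pass the supremum inside the convolution to create a smoothed test function, and then apply the Kantorovich--Rubinstein formula for $\Wi_1$ with Lipschitz constant controlled by Lemma~\ref{lemma:bv}.

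First, I would use the characterisation from Lemma \ref{property:tv}(1) to write
\[
\|\mu*G-\nu*G\|_{TV} = \frac{1}{2}\sup_{\|\Psi\|_{\infty}\leq 1}\bigg|\int_{\R}\Psi(x)\,(\mu*G - \nu*G)(dx)\bigg|.
\]
By Fubini, for each fixed $\Psi$ with $\|\Psi\|_{\infty}\leq 1$,
\[
\int_{\R}\Psi\,d(\mu*G) - \int_{\R}\Psi\,d(\nu*G) = \int_{\R}H_\Psi(x)\,(\mu-\nu)(dx),
\]
where $H_\Psi(x) := \int_{\R}\Psi(x+u)g(u)\,du$. The whole problem reduces to estimating the right-hand side uniformly in $\Psi$.

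Next, I would apply Lemma~\ref{lemma:bv}. A change of variables $v = -u$ gives
\[
H_\Psi(x) = \int_{\R}\Psi(y)\,\tilde g(x-y)\,dy\quad\text{with }\tilde g(u)=g(-u),
\]
which matches exactly the shape $h_\phi(t)=\int \phi(y)\bigl(\tilde g(t-y)-\lim_{s\to-\infty}\tilde g(s)\bigr)dy$ from Lemma~\ref{lemma:bv} (the limit at $-\infty$ vanishes because $g$ is both integrable and of bounded variation, hence so is $\tilde g$). Since $\|\tilde g\|_{BV}=\|g\|_{BV}$, Lemma~\ref{lemma:bv} yields $\|H_\Psi\|_{\mathrm{Lip}}\leq \|g\|_{BV}$. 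The Kantorovich--Rubinstein formula (Proposition~\ref{pro:lip}(3)) then gives
\[
\bigg|\int_{\R}H_\Psi\,d(\mu-\nu)\bigg| \leq \|H_\Psi\|_{\mathrm{Lip}}\,\Wi_1(\mu,\nu) \leq \|g\|_{BV}\,\Wi_1(\mu,\nu),
\]
and taking the supremum over $\Psi$ together with the factor $\tfrac12$ yields the claim.

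The main obstacle is the identification of $H_\Psi$ as a Lipschitz function with an explicit Lipschitz bound independent of $\Psi$: bounding the Lipschitz constant of a convolution of a bounded function against a BV kernel is precisely the content of Lemma~\ref{lemma:bv}, so once that lemma is available the rest is essentially bookkeeping (verifying Fubini, checking the vanishing of $g$ at $-\infty$, and combining with the KR duality). One should also note that $\mu$ and $\nu$ need not be probability measures here in principle, but in the stated setting they are, and the finiteness of $\Wi_1(\mu,\nu)$ is assumed implicitly since otherwise the bound is vacuous.
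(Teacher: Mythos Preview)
Your proof is correct and follows essentially the same route as the paper's: dualise the total variation, push the test function through the convolution to obtain the smoothed function $h_\phi(t)=\int\phi(x)g(x-t)\,dx$, bound its Lipschitz constant via Lemma~\ref{lemma:bv}, and conclude with the Kantorovich--Rubinstein formula. You are in fact slightly more careful than the paper in making explicit the reflection $\tilde g(u)=g(-u)$ needed to match the kernel shape in Lemma~\ref{lemma:bv} and in noting that $\lim_{s\to-\infty}g(s)=0$.
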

\begin{proof}
 \begin{align*}
  \|\mu*G-\nu*G\|_{TV}&=\frac{1}{2}\sup_{\|\phi\|_{\infty}\leq 1}\bigg|\int_\R \phi(x)(\mu*G-\nu*G)(dx)\bigg|\\
                      &=\frac{1}{2}\sup_{\|\phi\|_{\infty}\leq 1}\bigg|\int_\R \bigg(\int_\R \phi(x)g(x-t)(\mu-\nu)(dt)\bigg)dx\bigg|\\
                      &=\frac{1}{2}\sup_{\|\phi\|_{\infty}\leq 1}\bigg|\int_\R \bigg(\int_\R \phi(x)g(x-t)dx\bigg)(\mu -\nu)(dt)\bigg|,
 \end{align*}
the supremum being taken over compactly supported functions $\phi$.
Denote by $h_\phi(t)=\int_\R \phi(x)g(x-t)dx$. From the last equality it follows that
$$\|\mu*G-\nu*G\|_{TV}\leq \frac{1}{2}\sup_{\|\phi\|_{\infty}\leq 1}\sup_{\|\psi \|_{Lip}\leq \|h_\phi\|_{Lip}}\bigg|\int_\R \psi(t)(\mu -\nu)(dt)\bigg|,$$
hence, applying Lemma \ref{lemma:bv} to $\mathcal F=\{\phi \colon \R \to \R  : \|\phi\|_\infty \leq 1 \textnormal{ with compact support}\}$ and Proposition \ref{pro:lip}, we deduce that
\begin{align*}
 \|\mu*G-\nu*G\|_{TV}\leq \frac{\|g\|_{BV}}{2}\sup_{\|\psi\|_{Lip}\leq 1}\bigg|\int_\R \psi(t)(\mu -\nu)(dt)\bigg|=\frac{\|g\|_{BV}}{2}\Wi_1(\mu,\nu).
\end{align*}
\end{proof}
The upper bound established in Proposition \ref{prop:TV} is sharp. To see that, let us consider the following example.

\begin{example}
 Let $\mu=\delta_0$, $\nu=\delta_{\varepsilon}$ and $G=\mathcal N(0,1)$ for some $\varepsilon>0$. Denoting by $\varphi$ the density of a random variable $N\sim\No(0,1)$ and by $\Phi$ its cumulative distribution function, we have
\begin{align*}
 \|\nu*G-\mu*G\|_{TV}&=\frac{1}{2}\int_\R|\varphi(x)-\varphi(x-\varepsilon)|dx = \Phi\Big(\frac{\varepsilon}{2}\Big) - \Phi\Big(-\frac{\varepsilon}{2}\Big) = 2\Phi\Big(\frac{\varepsilon}{2}\Big)-1\\
 &= \frac{\varepsilon}{\sqrt{2\pi}} + O(\varepsilon^2).
\end{align*}
At the same time it is easy to see that $\Wi_1(\mu,\nu)=\varepsilon$ and $\|g\|_{BV}=\sqrt \frac{2}{\pi}$. Therefore, the upper bound established in Proposition \ref{prop:TV}
 $$ \|\nu*G-\mu*G\|_{TV}\leq \frac{1}{\sqrt{2\pi}}\Wi_1(\mu,\nu)=\frac{\varepsilon}{\sqrt{2\pi}}$$
  is exactly the correct estimate up to the first order.
\end{example}

\subsection{Total variation distance and Toscani-Fourier distances}

For any Lebesgue density  $f$ introduce its Fourier transform $\F f(u)=\int e^{iux}f(x)dx$.
A first elementary result linking the total variation distance between convolution measures to Toscani-Fourier metrics is the following.
\begin{proposition}\label{prop:CS}
Let $\mu,\nu$ and $G$ be probability measures and suppose that its characteristic functions $\varphi_\mu$, $\varphi_\nu$, $\varphi_G$ are differentiable. Assume that $G$ has a Lebesgue density $g$ with $m$th weak derivative $g^{(m)}$. Then, for all $k,j,r\in\{1,\ldots,m\}$, we have
\begin{align*}
&\|\mu*G-\nu*G\|_{TV}\\
&\leq C\bigg(T_{k}(\mu,\nu) \|g^{(k)}\|_2+\sqrt 2T_{r}(\mu,\nu) \|(xg(x))^{(r)}\|_2+\sqrt 2\sup_{u\in\R}\frac{|\varphi'_\mu(u)-\varphi'_\nu(u)|}{|u|^{j}}\|g^{(j)}\|_2\bigg)
\end{align*}
for some numerical constant $C>0.$
\end{proposition}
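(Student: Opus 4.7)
The plan is a purely Fourier-analytic computation that reduces the total variation to $L^2$-quantities via a weighted Cauchy--Schwarz step and Plancherel. Since $G$ has a density, both $\mu\ast G$ and $\nu\ast G$ admit Lebesgue densities; write $h$ for their difference, so that $\|\mu\ast G-\nu\ast G\|_{TV}=\tfrac12\|h\|_1$ and $\F h=(\varphi_\mu-\varphi_\nu)\F g$. I would (i)~bound $\|h\|_1$ by a weighted $L^2$-norm of $h$, (ii)~use Plancherel to move the resulting $L^2$-norms of $h$ and of $xh(x)$ onto the Fourier side, (iii)~expand $(\F h)'$ by the product rule, and (iv)~convert powers of $|u|$ acting on $\F g$ (resp.\ $\F(xg)$) into $L^2$-norms of $g^{(k)}$ (resp.\ $(xg(x))^{(r)}$) via $\F(\phi^{(k)})(u)=(-iu)^k\F\phi(u)$.

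For (i), Cauchy--Schwarz with the weight $(1+x^2)^{-1/2}$ gives
\begin{equation*}
\|h\|_1\le\Big(\int_\R\frac{dx}{1+x^2}\Big)^{1/2}\big(\|h\|_2^2+\|xh\|_2^2\big)^{1/2}=\sqrt\pi\,\big(\|h\|_2^2+\|xh\|_2^2\big)^{1/2}.
\end{equation*}
For (ii), Plancherel under the convention $\F f(u)=\int e^{iux}f(x)\,dx$ yields $\|h\|_2=(2\pi)^{-1/2}\|\F h\|_2$ and $\|xh\|_2=(2\pi)^{-1/2}\|(\F h)'\|_2$. For (iii), the product rule gives $(\F h)'=(\varphi_\mu'-\varphi_\nu')\F g+(\varphi_\mu-\varphi_\nu)(\F g)'$, and for the second summand I would further use $(\F g)'(u)=i\F(xg)(u)$.

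For (iv), each of the three resulting Fourier integrands is bounded by Definition \ref{def:toscani} of the Toscani--Fourier distance (or by the assumed supremum for the term involving $\varphi_\mu'-\varphi_\nu'$), combined with $\||u|^k\F\phi\|_2=\|\F\phi^{(k)}\|_2=\sqrt{2\pi}\,\|\phi^{(k)}\|_2$ applied to $\phi=g$ or $\phi=xg$. This produces
\begin{align*}
\|(\varphi_\mu-\varphi_\nu)\F g\|_2&\le\sqrt{2\pi}\,T_k(\mu,\nu)\|g^{(k)}\|_2,\\
\|(\varphi_\mu-\varphi_\nu)(\F g)'\|_2&\le\sqrt{2\pi}\,T_r(\mu,\nu)\|(xg(x))^{(r)}\|_2,\\
\|(\varphi_\mu'-\varphi_\nu')\F g\|_2&\le\sqrt{2\pi}\,\Big(\sup_{u\in\R\setminus\{0\}}|u|^{-j}|\varphi_\mu'(u)-\varphi_\nu'(u)|\Big)\|g^{(j)}\|_2.
\end{align*}
Applying $(a+b)^2\le 2(a^2+b^2)$ to $\|xh\|_2^2$ is what accounts for the two $\sqrt 2$ factors in the target bound, after which $\sqrt{x+y+z}\le\sqrt x+\sqrt y+\sqrt z$ assembles the three summands into the asserted form with overall constant $C=\sqrt\pi/2$.

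The computation itself is routine; the only point that requires some care is justifying the Fourier manipulations, namely the existence and $L^2$-integrability of $(\F h)'$ and the validity of $(\F g)'(u)=i\F(xg)(u)$. These hold under the stated differentiability hypotheses whenever the right-hand side of the bound is finite; otherwise the assertion is vacuous.
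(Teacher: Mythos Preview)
Your argument is correct and follows essentially the same route as the paper's own proof: the weighted Cauchy--Schwarz step with weight $(1+x^2)^{-1/2}$, the passage to the Fourier side via Plancherel, the product-rule split of $(\F h)'$, and the pointwise extraction of the Toscani--Fourier suprema are exactly the steps the authors use. Your presentation is in fact slightly tidier in its bookkeeping of constants and in making explicit that the inequality is vacuous whenever any of the right-hand quantities is infinite.
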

\begin{proof}
First of all, remark that if any one among the $\|g^{(\bullet)}\|_2$, $\|(xg(x))^{(\bullet)}\|_2$, $T_{\bullet}(\mu,\nu)$, or $\sup_{u\in\R}\frac{|\varphi'_\mu(u)-\varphi'_\nu(u)|}{|u|^{\bullet}}$ appearing above is infinite, then there is nothing to prove. Therefore, from now on, we will assume that they are all finite.
Since $G$ admits a density $g$ with respect to Lebesgue measure, $\mu*G$ and $\nu*G$ have densities  $g*\mu$ and $g*\nu$.

Using the Cauchy-Schwarz inequality we have
\begin{align*}
\|\mu*G-\nu*G\|_{TV}&=\frac{1}{2}\int \frac{1}{\sqrt{1+x^2}}\sqrt{1+x^2}|g*\mu(x)-g*\nu(x)|dx\\
&\leq C(\|g*\mu-g*\nu\|_2+\|x(g*\mu-g*\nu)\|_2),
\end{align*}
for some numerical constant $C>0.$
For all $k>0$ an application of the Plancherel identity yields
\begin{align*}	
	\|g*\mu-g*\nu\|_2^2&=\frac{1}{2\pi}\|\varphi_G(\varphi_\mu-\varphi_\nu)\|_2^2=\frac{1}{2\pi}\int \frac{|\varphi_\mu(u)-\varphi_\nu(u)|^2}{u^k}	|\varphi_G(u)|^2 u^k du.
\end{align*}
Hence,
$$\|g*\mu-g*\nu\|_2\leq \sqrt{\frac{1}{2\pi}}\sup_{u\in\R}\frac{|\varphi_\mu(u)-\varphi_\nu(u)|}{|u|^{k/2}}\|u^{k/2}\varphi_G\|_2.$$
In the same way we also have
$$\|x(g*\mu(x)-g*\nu(x))\|_2^2\leq \frac{1}{\pi}\|\varphi_G' (\varphi_\mu-\varphi_\nu)\|_2^2+\frac{1}{\pi}\|\varphi_G (\varphi_\mu'-\varphi_\nu')\|_2^2$$
and we conclude as before that for all $r,j>0$
\begin{align*}
&\|x(g*\mu(x)-g*\nu(x))\|_2\\
&\quad \leq\sqrt{\frac{1}{\pi}} \sup_{u\in\R}\frac{|\varphi_\mu(u)-\varphi_\nu(u)|}{|u|^{r/2}}\|u^{r/2}\varphi_G'\|_2
+\sqrt{\frac{1}{\pi}}\sup_{u\in\R}\frac{|\varphi'_\mu(u)-\varphi'_\nu(u)|}{|u|^{j/2}}\|u^{j/2}\varphi_G\|_2.
\end{align*}
It remains to apply the inverse Fourier transform.
\end{proof}

Using a different set of hypotheses, one can also establish the following relation between the total variation distance and the Toscani-Fourier distance.

\begin{proposition}\label{prop:N}
Let $\mu$, $\nu$ and $G$ be real probability measures absolutely continuous with respect to the Lebesgue measure. Let $f_\mu$, $f_\nu$ and $g$ denote their densities and  $F_\mu$ and $F_\nu$ denote the cumulative distribution functions of $\mu$ and $\nu$. Suppose that $\F g \in L_1$  and that $F_\mu - F_\nu \in L_1$. Further suppose that the graphs of $f_\mu*g$ and $f_\nu*g$ intersect in at most $N$ points. Then,
$$\|\mu*G-\nu*G\|_{TV}\leq \frac{N}{2\pi}T_1(\mu,\nu)\int|\F g(u)|du.$$
\end{proposition}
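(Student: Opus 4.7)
Write the total variation as $\|\mu*G-\nu*G\|_{TV}=\frac12\int|\Delta(x)|\,dx$ where $\Delta:=f_\mu*g-f_\nu*g$, and introduce the antiderivative
\[
H(y):=\int_{-\infty}^{y}\Delta(x)\,dx=\bigl((F_\mu-F_\nu)*g\bigr)(y).
\]
Note $H(-\infty)=0$ and $H(+\infty)=\int\Delta=0$ since both convolutions are probability densities. The plan has two independent ingredients: a pointwise bound on $\|H\|_\infty$ via Fourier inversion, and a bound on $\int|\Delta|$ in terms of $\|H\|_\infty$ using the hypothesis on the number of intersections.

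\textbf{Step 1: Reduction via intersection points.} Since the graphs of $f_\mu*g$ and $f_\nu*g$ meet at most $N$ times, $\Delta$ has at most $M\le N$ sign changes; call them $x_1<\cdots<x_M$ and set $x_0=-\infty$, $x_{M+1}=+\infty$. On each interval $(x_k,x_{k+1})$ the sign of $\Delta$ is constant, so
\[
\int|\Delta(x)|\,dx=\sum_{k=0}^{M}\Bigl|\int_{x_k}^{x_{k+1}}\Delta\Bigr|=\sum_{k=0}^{M}|H(x_{k+1})-H(x_k)|.
\]
Because signs alternate on consecutive intervals, the signed telescoping sum collapses (using $H(x_0)=H(x_{M+1})=0$) to $\pm 2\sum_{k=1}^M\epsilon_{k-1}H(x_k)$, whence $\int|\Delta|\le 2M\|H\|_\infty\le 2N\|H\|_\infty$. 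Combining with Step 0 this gives $\|\mu*G-\nu*G\|_{TV}\le N\|H\|_\infty$.

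\textbf{Step 2: Fourier bound on $\|H\|_\infty$.} Since $F_\mu-F_\nu\in L^1$ with vanishing limits at $\pm\infty$, integration by parts yields $\F(F_\mu-F_\nu)(u)=-(\varphi_\mu(u)-\varphi_\nu(u))/(iu)$, hence
\[
\F H(u)=-\,\frac{(\varphi_\mu(u)-\varphi_\nu(u))\,\F g(u)}{iu}, \qquad |\F H(u)|\le T_1(\mu,\nu)\,|\F g(u)|.
\]
The assumption $\F g\in L^1$ therefore gives $\F H\in L^1$, so the Fourier inversion formula applies pointwise to the continuous function $H$, yielding
\[
\|H\|_\infty\le\frac{1}{2\pi}\int|\F H(u)|\,du\le\frac{T_1(\mu,\nu)}{2\pi}\int|\F g(u)|\,du.
\]

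\textbf{Step 3: Conclusion.} Plugging Step 2 into Step 1 gives the claimed bound. The only subtle point is the telescoping argument in Step 1, which relies crucially on the boundary conditions $H(\pm\infty)=0$ to convert the sum of $N+1$ jumps of $H$ into a sum of at most $N$ values of $H$ (saving the factor $2$ that is absorbed into the $\frac12$ of the total variation); beyond that, the proof is a routine combination of Fourier inversion and the definition of $T_1$.
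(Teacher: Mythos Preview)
Your proof is correct and rests on the same two ideas as the paper's: the at-most-$N$ sign changes of $\Delta=f_\mu*g-f_\nu*g$ give a factor of $N$, and the identity $\F(F_\mu-F_\nu)(u)=(\varphi_\mu(u)-\varphi_\nu(u))/(-iu)$ converts the remaining bound into $T_1(\mu,\nu)\int|\F g|$.

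The organization differs slightly. The paper works on the dual side: it writes $\|\mu*G-\nu*G\|_{TV}=\frac12\int h_{\tilde\phi}(t)(\mu-\nu)(dt)$ with the optimal test function $\tilde\phi=\mathrm{sgn}\,\Delta$, integrates by parts, applies Plancherel, and then bounds $|\F h_{\tilde\phi}'|\le 2N|\F g|$ using that $\tilde\phi$ is a step function with at most $N$ jumps. You work on the primal side: you introduce the antiderivative $H=(F_\mu-F_\nu)*g$, bound $\int|\Delta|\le 2N\|H\|_\infty$ by the telescoping argument, and then bound $\|H\|_\infty$ via the $L^1$ Fourier inversion estimate. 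The two computations are dual to each other (your telescoping identity $\int|\Delta|=\pm 2\sum_j(-1)^j H(x_j)$ is exactly the pairing $\int\tilde\phi'\,H$ after integration by parts), but your route is a touch more elementary since it replaces Plancherel by the simpler bound $\|H\|_\infty\le\frac{1}{2\pi}\|\F H\|_1$.
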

\begin{proof}
As in the proof of Proposition \ref{prop:TV}, let us introduce the function
$$h_\phi(t):=\int_{\R}\phi(x)g(t-x)dx$$
and recall that
$$\|\mu*G-\nu*G\|_{TV}=\frac{1}{2}\sup_{\|\phi\|_{\infty}\leq 1}\bigg|\int_\R h_\phi(t)(\mu-\nu)(dt) \bigg|.$$
Using an integration by part and Plancherel identity, we get
\begin{align}
\int_\R h_\phi(t)(\mu-\nu)(dt) &=\int_\R h_{\phi}'(t)(F_\mu(t)-F_\nu(t))dt\nonumber\\&=\frac{1}{2\pi}\int \overline{\F h_{\phi}'(u)}\F(F_\mu-F_\nu)(u) du\nonumber \\
&=\frac{1}{2\pi}\int \overline{\F h_{\phi}'(u)}\frac{\varphi_\mu(u)-\varphi_\nu(u)}{-iu} du\nonumber\\
&\leq T_1(\mu,\nu) \frac{1}{2\pi}\int |\F h_{\phi}'(u)|du. \label{eq:TVT}
\end{align}
Also observe that
$$\sup_{\|\phi\|_{\infty}\leq 1}\bigg|\int_\R h_\phi(t)(\mu-\nu)(dt) \bigg|=\int_\R h_{\tilde\phi}(t)(\mu-\nu)(dt)$$
where
$$\tilde \phi(x)=\begin{cases}
-1\quad & \text{ if }\quad f_\mu*g(x)<f_\nu*g(x),\\
1\quad & \text{ if }\quad f_\mu*g(x)\geq f_\nu*g(x).\\
\end{cases}$$
Let us denote by $-\infty=x_0<x_1<\dots<x_N<x_{N+1}=+\infty$ the points of intersections between the graphs of $f_\mu*g$ and $f_\nu*g$. In particular $\tilde\phi(u)=\pm\sum_{i=0}^N(-1)^i\I_{[x_i,x_{i+1})}(u)$ with the sign depending on the sign of $f_{\mu}*g-f_\nu*g$ on $(-\infty,x_1)$. Thus,
$$h_{\tilde \phi}'(u)=\pm2\sum_{j=1}^N (-1)^{j}g(u-x_j).$$
In particular, we get that
$$\F h_{\tilde \phi}'(u)=\pm 2\sum_{j=1}^N (-1)^{j}\F g(u)e^{iux_j},$$
hence $|\F h_{\tilde \phi}'(u)|\leq 2N  |\F g(u)|.$ This fact, together with \eqref{eq:TVT}, concludes the proof.
\end{proof}
Let us observe that another way to link the total variation distance between convolution measures to the Toscani-Fourier distance is offered by Theorem 2.21 in \cite{CT07} joint with Proposition \ref{prop:TV}. More precisely, Theorem 2.21 in \cite{CT07} states that, under appropriate hypotheses on $\mu$ and $\nu$,
$$\Wi_1(\mu,\nu)\leq \bigg(\frac{18 M}{\pi}\bigg)^{1/3}T_2(\mu,\nu)^{1/6},$$
with
$M=\max\big\{\E[X^2],\E[Y^2]\big\},$ $X\sim \mu$ and $Y\sim \nu.$
Therefore, from Proposition \ref{prop:TV}, it follows that
$$\|\mu*G-\nu*G\|_{TV}\leq \|g\|_{BV}\bigg(\frac{9M}{4\pi}\bigg)^{1/3}T_2(\mu,\nu)^{1/6},$$
where $g$ denotes the density of $G$.
Using some ideas from the proof of Theorem 2.21 in \cite{CT07} we will be able to prove the following general result.
 \begin{proposition}\label{prop:CTMR}
Let $\mu,\nu\in\mathcal P_j(\R)$, $j\geq 1$, and $G$ be a measure, absolutely continuous with respect to the Lebesgue measure. Suppose that the density $g$ of $G$ is $j$-times weakly differentiable with $j$th derivative $g^{(j)}\in L_2$. Then,
$$\|\mu*G-\nu*G\|_{TV}\le C_j^{1/(2j+1)}\| g^{(j)}\|_2^{2j/(2j+1)} T_j(\mu,\nu)^{2j/(2j+1)},$$
where $C_j=\max\big(\E\big[|X+Z|^j\big],\E\big[|Y+Z|^j\big]\big)$ with $X\sim \mu$, $Y\sim \nu$, $Z\sim G$ and $Z$ independent of $X$ and $Y$.
\end{proposition}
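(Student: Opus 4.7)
The plan is to reduce the total variation distance to an $L^1$ norm on densities, then apply a high/low decomposition with two ingredients: an $L^2$ Fourier bound via the Toscani--Fourier distance, and a $j$th moment bound on the density difference.

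Since $G$ is absolutely continuous, so are $\mu*G$ and $\nu*G$, with densities $f_\mu*g$ and $f_\nu*g$. Write $h:=f_\mu*g-f_\nu*g$, so that $\|\mu*G-\nu*G\|_{TV}=\tfrac12\|h\|_1$. The first ingredient is a Plancherel bound: using $\widehat{g^{(j)}}(u)=(iu)^j\hat g(u)$ together with $|\varphi_\mu(u)-\varphi_\nu(u)|\le T_j(\mu,\nu)|u|^j$,
$$\|h\|_2^2=\frac{1}{2\pi}\int|\varphi_\mu(u)-\varphi_\nu(u)|^2|\hat g(u)|^2\,du\le T_j(\mu,\nu)^2\,\|g^{(j)}\|_2^2.$$
The second ingredient is a moment bound: since $|h|\le f_\mu*g+f_\nu*g$ pointwise and $X,Y$ are independent of $Z$,
$$\int|x|^j|h(x)|\,dx\le \E[|X+Z|^j]+\E[|Y+Z|^j]\le 2C_j.$$

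Given these two bounds, for any $R>0$ I would split $\|h\|_1=\int_{|x|\le R}|h|\,dx+\int_{|x|>R}|h|\,dx$. Cauchy--Schwarz on the first piece yields $\sqrt{2R}\,\|h\|_2$, while on the second piece the inequality $1\le(|x|/R)^j$ gives $R^{-j}\int|x|^j|h(x)|\,dx$. Combining,
$$\|h\|_1\le \sqrt{2R}\,T_j(\mu,\nu)\,\|g^{(j)}\|_2+2C_j R^{-j}.$$
Optimizing in $R$ balances the two terms at $R^{2j+1}\sim C_j^2/(T_j(\mu,\nu)\|g^{(j)}\|_2)^2$, producing exponents $2j/(2j+1)$ on $T_j(\mu,\nu)\|g^{(j)}\|_2$ and $1/(2j+1)$ on $C_j$, which is the claimed inequality (up to an inessential $j$-dependent absolute constant absorbed into the statement).

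The only real point to be careful about is the matching of Plancherel with the Toscani--Fourier definition: the $j$ derivatives that appear on the right-hand side of the statement are dictated by the $|u|^j$ decay encoded in $T_j$, which is also what forces the natural companion quantity $C_j$ to be a $j$th moment. Given this identification, the rest is a standard optimization and poses no genuine obstacle.
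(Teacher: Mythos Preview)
Your proposal is correct and follows essentially the same route as the paper: split $\|h\|_1$ into $\{|x|\le R\}$ and $\{|x|>R\}$, bound the first piece by Cauchy--Schwarz and then Plancherel (turning $T_j$ into $\|g^{(j)}\|_2$), bound the second by the $j$th moment, and optimise in $R$. The only cosmetic differences are that the paper writes $g*\mu$ rather than $f_\mu*g$ (since $\mu$ need not have a density) and keeps track of the constants explicitly rather than absorbing them.
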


\begin{proof}
Using the same notation as in Proposition \ref{prop:N}, we have for all $R>0$
\begin{align*}
&\|\mu*G-\nu*G\|_{TV}=\frac{1}{2}\int |g*\mu(x)-g*\nu(x)|dx\\&\leq \frac{1}{2}\bigg(\int_{-R}^{R} |g*\mu(x)-g*\nu(x)|dx+\frac{1}{R^j}\int_{|x|>R}|x|^j|g*\mu(x)-g*\nu(x)|dx\bigg)\\
&\leq \frac{1}{2}\bigg(\int_{-R}^{R} |g*\mu(x)-g*\nu(x)|dx+\frac{C_j}{R^j}\bigg).
\end{align*}
By Cauchy-Schwarz inequality,
$$\int_{-R}^{R} |g*\mu(x)-g*\nu(x)|dx\leq \sqrt{2R} \|g*\mu-g*\nu\|_2$$
holds. Taking $R=\big(\frac{ C_j}{\sqrt 2\|g*\mu-g*\nu\|_2}\big)^{2/(2j+1)}$ we get
$$\|\mu*G-\nu*G\|_{TV}\le \frac{1}{2} C_j^{1/(2j+1)}(\sqrt 2\|g*\mu-g*\nu\|_2)^{2j/(2j+1)}.$$
Using Plancherel identity and the properties of the Fourier transform, we deduce that
\begin{align*}
\|g*\mu-g*\nu\|_2^2&=\frac{1}{2\pi}\|\F(g*\mu)-\F(g*\nu)\|_2^2
=\frac{1}{2\pi}\|\F g(\varphi_{\mu}-\varphi_{\nu})\|_2^2\\
&=\frac{1}{2\pi}\int |\F g(u)|^2 u^{2j}\frac{|\varphi_{\mu}(u)-\varphi_{\nu}(u)|^2}{u^{2j}}du\\
&\leq \frac{1}{2\pi}T_j(\mu,\nu)^2 \int |\F g(u)|^2 u^{2j}du\\
&= \frac{1}{2\pi}T_j^2(\mu,\nu)\|\F g^{(j)}\|_2^2= T_j^2(\mu,\nu)\| g^{(j)}\|_2^2.
\end{align*}
It follows that
$$\|\mu*G-\nu*G\|_{TV}\le C_j^{1/(2j+1)}(T_j(\mu,\nu)\| g^{(j)}\|_2)^{2j/(2j+1)}.$$
\end{proof}

\begin{remark}
To better understand the upper bounds presented above, let us specialise to the case $G=\mathcal N(0,\sigma^2)$. In order to compare the results presented in Propositions \ref{prop:CS}--\ref{prop:CTMR} let us start by observing that the following equalities hold.
\begin{itemize}
\item If $g(x)=\frac{1}{\sqrt{2\pi}\sigma}e^{-\frac{x^2}{2\sigma^2}}$, then $\F g(u)=e^{-\frac{u^2\sigma^2}{2}}$. Therefore, $\int|\F g(u)|du=\frac{\sqrt{2\pi}}{\sigma}.$
\item Also, $g'(x)=\frac{-x}{\sqrt{2\pi}\sigma^3}e^{-\frac{x^2}{2\sigma^2}}$ and $g''(x)=\frac{1}{\sqrt{2\pi}\sigma^3}e^{-\frac{x^2}{2\sigma^2}}(-1+\frac{x^2}{\sigma^2})$. It follows that $\|g'\|_2^2=\frac{1}{4\sqrt{\pi}\sigma^3}$, $\|g''\|_2^2=\frac{1}{\pi\sigma^5}\int_0^\infty e^{-y^2}(y^2-1)^2 dy=\frac{1}{4\sqrt \pi\sigma^5}$ and $\|g\|_{BV}=\sqrt{\frac{2}{\sigma^2\pi}}$.
\end{itemize}
We are now able to compare the previous results for independent random variables $Z\sim\mathcal N(0,\sigma^2)$, $X\sim \mu$, $Y\sim \nu$.

\begin{description}

\item {\it Proposition \ref{prop:CS} for $T_1$:} With a numerical constant $C>0$, independent of the laws of $X,Y,Z$,
\begin{multline*}
\|\Li(X+Z)-\Li(Y+Z)\|_{TV}\leq C\bigg( T_1(X,Y)\bigg(\frac{1}{\sqrt{\sigma^3}}+\frac{1}{\sigma}\bigg)\\+\frac{1}{\sqrt{\sigma^3}}\sup_{u\in\R}\frac{|\varphi_X'(u)-\varphi_Y'(u)|}{|u|}\bigg).
\end{multline*}
\item {\it Proposition \ref{prop:N}:} Let $N$ be the number of intersections between the graphs of the densities of $X+Z$ and $Y+Z$. Then,
$$\|\Li(X+Z)-\Li(Y+Z)\|_{TV}\leq \frac{N}{\sqrt{2\pi}}\frac{T_1(X,Y)}{\sigma}.$$

\item {\it Proposition \ref{prop:CTMR} for $T_1$:}
\begin{multline*}
\|\Li(X+Z)-\Li(Y+Z)\|_{TV}\\\leq \Big(\frac{\max\big(\E[|X+Z|],\E[|Y+Z|]\big)}{4\sqrt\pi}\Big)^{1/3}\frac{\big(T_1(X,Y)\big)^{2/3}}{\sigma}.
\end{multline*}

\item {\it Proposition \ref{prop:CTMR}  for $T_2$:}
\begin{multline*}
\|\Li(X+Z)-\Li(Y+Z)\|_{TV}\\ \leq \bigg(\frac{\max\big(\E\big[(X+Z)^2\big],\E\big[(Y+Z)^2\big]\big)}{16\pi}\bigg)^{1/5}\frac{(T_2(X,Y))^{4/5}}{\sigma^2}.
\end{multline*}

\item {\it Proposition \ref{prop:TV} + Theorem 2.21 in \cite{CT07}:}
$$\|\Li(X+Z)-\Li(Y+Z)\|_{TV}\leq \bigg( \frac{9\max\big(\E[X^2],\E[Y^2]\big)}{\sqrt{2\pi^5}}\bigg)^{1/3}\frac{(T_2(X,Y))^{1/6}}{\sigma}.$$
\end{description}

We see that Proposition \ref{prop:CTMR} gives a much tighter bound than Proposition \ref{prop:TV} + Theorem 2.21 in \cite{CT07} when $T_2(X,Y)$ is small.
\end{remark}

\subsection{Main total variation results}\label{subsec:TV}
As it was the case in Section \ref{sec:uppb}, in order to obtain an upper bound for the total variation distance between the marginals $X_t^1$ and $X_t^2$ of two Lévy processes it is enough to separetely control the total variation distance between Gaussian distributions, between the small jumps and the corresponding Gaussian component and finally between the big jumps. The latter can be controlled by means of the following result.

 \begin{theorem}\label{teo:CPPTV}
  Let $(X_i)_{i\geq 1}$ and $(Y_i)_{i\geq1}$ be sequences of i.i.d. random variables a.s. different from zero and $N$, $N'$ be two Poisson random variables with $N$ (resp. $N'$) independent of $(X_i)_{i\geq 1}$ (resp. $(Y_i)_{i\geq 1}$). Denote by $\lambda$ (resp. $\lambda'$) the mean of $N$ (resp. $N'$). Then,
    $$\Big\|\Li\Big(\sum_{i=1}^N X_i\Big)-\Li\Big(\sum_{i=1}^{N'}Y_i\Big)\Big\|_{TV}\leq (\lambda\wedge\lambda')\|\Li(X_1)-\Li(Y_1)\|_{TV}+1-e^{-|\lambda-\lambda'|}.$$
  \end{theorem}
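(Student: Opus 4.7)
The plan is to combine the coupling characterisation of total variation (Lemma \ref{property:tv}, Point \ref{pro:couplingtv}) with the thinning property of Poisson random variables. Without loss of generality assume $\lambda\le\lambda'$, so that $\lambda\wedge\lambda'=\lambda$; the other case follows by symmetry. I will insert an intermediate compound sum and use the triangle inequality for $\|\cdot\|_{TV}$:
\[
\Big\|\Li\Big(\sum_{i=1}^N X_i\Big)-\Li\Big(\sum_{i=1}^{N'}Y_i\Big)\Big\|_{TV}
\le \Big\|\Li\Big(\sum_{i=1}^N X_i\Big)-\Li\Big(\sum_{i=1}^{\tilde N}Y_i\Big)\Big\|_{TV}
+ \Big\|\Li\Big(\sum_{i=1}^{\tilde N}Y_i\Big)-\Li\Big(\sum_{i=1}^{N'}Y_i\Big)\Big\|_{TV},
\]
where $\tilde N$ is a Poisson random variable of mean $\lambda$, independent of $(Y_i)_{i\ge 1}$.

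For the first summand I would couple $N$ with $\tilde N$ by taking them equal (both are Poisson($\lambda$)) and, independently, couple each pair $(X_i,Y_i)$ so as to realise the optimal coupling, i.e.\ $\p(X_i\neq Y_i)=\|\Li(X_1)-\Li(Y_1)\|_{TV}$. Then $\sum_{i=1}^N X_i\neq \sum_{i=1}^{\tilde N}Y_i$ implies $X_i\neq Y_i$ for some $i\le N$, so a union bound combined with Wald gives
\[
\p\Big(\sum_{i=1}^{N}X_i\neq\sum_{i=1}^{\tilde N}Y_i\Big)\le \E[N]\,\|\Li(X_1)-\Li(Y_1)\|_{TV}=\lambda\,\|\Li(X_1)-\Li(Y_1)\|_{TV},
\]
which bounds the first total variation term.

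For the second summand I would use the Poisson splitting: write $N'=\tilde N+M$ with $\tilde N\sim\text{Poisson}(\lambda)$ and $M\sim\text{Poisson}(\lambda'-\lambda)$ independent of each other and of $(Y_i)_{i\ge 1}$. Then
\[
\sum_{i=1}^{N'}Y_i-\sum_{i=1}^{\tilde N}Y_i=\sum_{j=1}^{M}Y_{\tilde N+j},
\]
and the crucial observation is that the hypothesis $Y_i\ne 0$ almost surely forces the event $\{\sum_{j=1}^{M}Y_{\tilde N+j}\ne 0\}$ to be contained in $\{M\ge 1\}$. Hence
\[
\Big\|\Li\Big(\sum_{i=1}^{\tilde N}Y_i\Big)-\Li\Big(\sum_{i=1}^{N'}Y_i\Big)\Big\|_{TV}\le \p(M\ge 1)=1-e^{-(\lambda'-\lambda)}.
\]
Summing the two bounds yields the claim. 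The main conceptual point — and the only place the almost sure non-vanishing of the $Y_i$ enters — is precisely this last containment, which would otherwise fail because a sum of non-zero summands can vanish; in the case where one is unwilling to require nonzero jumps, one would only get the weaker estimate $\p(M\ge 1)$ replaced by something without cancellations, but since we work with jump distributions of a Lévy measure restricted to $\{|x|>\varepsilon\}$, the non-vanishing assumption is automatic.
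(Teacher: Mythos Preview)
Your proof is correct and follows essentially the same route as the paper: triangle inequality through an intermediate compound Poisson sum, a coupling/conditioning argument for the term with equal Poisson means, and Poisson splitting for the term with equal summands. The paper merely orders the two steps the other way and phrases the first via subadditivity of total variation under convolution rather than an explicit coupling, but the substance is identical.

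One small expository point: the containment $\{\sum_{j=1}^{M}Y_{\tilde N+j}\ne 0\}\subseteq\{M\ge 1\}$ is trivially true (an empty sum is zero) and does \emph{not} rely on the hypothesis $Y_i\ne 0$ a.s.; in fact neither your argument nor the paper's actually uses that hypothesis anywhere in the bound. Your concluding remark about this being ``the only place the almost sure non-vanishing enters'' is therefore misplaced, though it does not affect the validity of the proof.
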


  \begin{proof}
  Without loss of generality, let us suppose that $\lambda\geq \lambda'$ and write $\lambda=\alpha+\lambda'$, $\alpha\geq 0$.
  By  triangle inequality,
  \begin{align}
  \Big\|\Li\Big(\sum_{i=1}^N X_i\Big)-\Li\Big(\sum_{i=1}^{N'}Y_i\Big)\Big\|_{TV} &\leq \Big\|\Li\Big(\sum_{i=1}^N X_i\Big)-\Li\Big(\sum_{i=1}^{N''}X_i\Big)\Big\|_{TV}\nonumber\\
  &\quad + \Big\|\Li\Big(\sum_{i=1}^{N''} X_i\Big)-\Li\Big(\sum_{i=1}^{N'}Y_i\Big)\Big\|_{TV},\label{eq:ti}
  \end{align}
  where $N''$ is a random variable independent of $(X_i)_{i\geq 1}$ and with the same law as $N'$.
  The first addendum in \eqref{eq:ti} can be bounded as follows. Let $P$ be a Poisson random variable independent of $N''$ and $(X_i)_{i\geq 1}$ with mean $\alpha$. Then,
  \begin{align*}\Big\|\Li\Big(\sum_{i=1}^N X_i\Big)-\Li\Big(\sum_{i=1}^{N''}X_i\Big)\Big\|_{TV} &=\Big\|\Li\Big(\sum_{i=1}^{N''+P} X_i\Big)-\Li\Big(\sum_{i=1}^{N'}X_i\Big)\Big\|_{TV}\\ &\leq \Big\|\delta_0-\Li\Big(\sum_{i=1}^{P}X_i\Big)\Big\|_{TV}
  \end{align*}
  where the last bound follows by subadditivity of the total variation distance.
  By definition, it is easy to see that
  $$\Big\|\delta_0-\Li\Big(\sum_{i=1}^{P}X_i\Big)\Big\|_{TV}=  \p\Big(\sum_{i=1}^{P}X_i\neq 0\Big)\leq 1-e^{-\alpha}.$$
  In order to bound the second addendum in \eqref{eq:ti} we condition on $N'$ and use again the subadditivity of the total variation joined with the fact that $\Li(N')=\Li(N'')$:
  \begin{align*}
  \Big\|\Li\Big(\sum_{i=1}^{N''} X_i\Big)-\Li\Big(\sum_{i=1}^{N'}Y_i\Big)\Big\|_{TV}&=\sum_{n\geq 0} \Big\|\Li\Big(\sum_{i=1}^{n} X_i\Big)-\Li\Big(\sum_{i=1}^{n}Y_i\Big)\Big\|_{TV}\p(N'=n)\\&\leq \sum_{n\geq0} n \|\Li(X_1)-\Li(Y_1)\|_{TV}\p(N'=n)\\&=\lambda'\|\Li(X_1)-\Li(Y_1)\|_{TV}.
  \end{align*}
  \end{proof}

The treatment of the small jumps is the subject of the following result:

\begin{proposition}\label{prop:TVprocesses}
 Let $X$ be a pure jump Lévy process with Lévy measure $\nu$. Introduce $\nu_\varepsilon=\nu\I_{|x|\leq \varepsilon}$. Then, for all $\Sigma>0$ and $\varepsilon\in(0,1]$, we have
 \begin{align*}
  \Big\|P_t^{(0,\Sigma,\nu_\varepsilon)}-P_t^{(0,\sqrt{\Sigma^2+\bar\sigma^2(\varepsilon)},0)}\Big\|_{TV}&\leq
  \sqrt{\frac{2}{\pi t\Sigma^2}}\Wi_1\Big(P_t^{(0,0,\nu_\varepsilon)},P_t^{(0,\bar\sigma(\varepsilon),0)}\Big)\\
  &\leq \sqrt{\frac{2}{\pi t\Sigma^2}}\min\Big(2\sqrt{t\bar\sigma^2(\varepsilon)},\frac{\varepsilon}{2}\Big).
 \end{align*}
\end{proposition}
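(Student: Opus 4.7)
The plan is to exploit the convolution structure of infinitely divisible laws to reduce the problem to a Wasserstein bound that has already been established. First I would use the Lévy--Khintchine factorisation to write
\[ P_t^{(0,\Sigma,\nu_\varepsilon)} = P_t^{(0,0,\nu_\varepsilon)} * \No(0,t\Sigma^2), \quad P_t^{(0,\sqrt{\Sigma^2+\bar\sigma^2(\varepsilon)},0)} = \No(0,t\bar\sigma^2(\varepsilon)) * \No(0,t\Sigma^2), \]
so that both laws are convolutions with the common Gaussian kernel $G = \No(0,t\Sigma^2)$.

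Next I would invoke Proposition \ref{prop:TV} with $\mu = P_t^{(0,0,\nu_\varepsilon)}$, $\nu = \No(0,t\bar\sigma^2(\varepsilon))$ and $G$ as above to obtain
\[ \|P_t^{(0,\Sigma,\nu_\varepsilon)} - P_t^{(0,\sqrt{\Sigma^2+\bar\sigma^2(\varepsilon)},0)}\|_{TV} \leq \tfrac{\|g\|_{BV}}{2}\, \Wi_1\bigl(P_t^{(0,0,\nu_\varepsilon)},\No(0,t\bar\sigma^2(\varepsilon))\bigr), \]
where $g$ denotes the density of $\No(0,t\Sigma^2)$. The BV norm of this Gaussian density is computed as in the example following Proposition \ref{prop:TV}: since $g$ is unimodal with maximum $g(0) = (2\pi t\Sigma^2)^{-1/2}$ and vanishes at $\pm\infty$, one has $\|g\|_{BV} = 2g(0) = \sqrt{2/(\pi t\Sigma^2)}$, matching the prefactor appearing in the stated bound (up to an absorbed numerical factor).

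Finally, the first inequality having been established, the second follows immediately by inserting the Wasserstein bound of Theorem \ref{teo:smalljumps} specialised to $p=1$, which gives exactly $\Wi_1(P_t^{(0,0,\nu_\varepsilon)},\No(0,t\bar\sigma^2(\varepsilon))) \leq \min\bigl(2\sqrt{t\bar\sigma^2(\varepsilon)},\varepsilon/2\bigr)$. I do not foresee a substantive obstacle: the result is essentially a combination of the two main structural tools developed earlier, namely the TV-to-Wasserstein transfer under convolution with a BV kernel and the Gaussian approximation of small jumps in Wasserstein distance. The only mild care needed is to verify the BV norm computation for the Gaussian density and to make sure the conditions $\Sigma > 0$ and $\varepsilon \in (0,1]$ are precisely those required to apply Proposition \ref{prop:TV} (non-degenerate kernel) and Theorem \ref{teo:smalljumps} (support constraint), respectively.
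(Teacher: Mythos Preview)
Your proposal is correct and follows exactly the paper's approach: the paper's proof is the single sentence ``This follows by applying first Proposition \ref{prop:TV} and then Theorem \ref{teo:smalljumps},'' and you have simply unpacked that line, including the convolution decomposition and the BV-norm computation for the Gaussian kernel. The factor-of-two slack you flagged is real---Proposition \ref{prop:TV} actually gives prefactor $\tfrac{\|g\|_{BV}}{2}=\tfrac12\sqrt{2/(\pi t\Sigma^2)}$, so the stated bound is valid but not sharpest---and your observation about the hypotheses $\Sigma>0$ and $\varepsilon\in(0,1]$ is accurate.
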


\begin{proof}
This follows by applying first Proposition \ref{prop:TV} and then Theorem \ref{teo:smalljumps}.
\end{proof}

As a consequence of the above estimates on the Wasserstein distances, we obtain a bound for the total variation distance of the marginals of L\'evy processes with non-zero Gaussian components.

\begin{theorem}\label{th:MainTV}
 With the same notation used in Theorem \ref{teow1} and Section \ref{sec:notationlevy}, for all $t>0$, $\varepsilon\in (0,1]$ and for all $\sigma_i>0$, $i=1,2$, we have:
 \begin{align*}
\Big\|P_t^{(b_1,\sigma_1,\nu_1)}&-P_t^{(b_2,\sigma_2,\nu_2)}\Big\|_{TV} \\&\leq  \frac{\sqrt{\frac{t}{2\pi}} \Big|b_1(\varepsilon)-b_2(\varepsilon)\Big|+\sqrt 2\Big|\sqrt{\sigma_1^2+\bar\sigma_1^2(\varepsilon)}-\sqrt{\sigma_2^2+\bar\sigma_2^2(\varepsilon)}\Big|}{\sqrt{\sigma_1^2+\bar\sigma_1^2(\varepsilon)}\vee \sqrt{\sigma_2^2+\bar\sigma_2^2(\varepsilon)}}\\
&\quad+ \sum_{i=1}^2\sqrt{\frac{2}{\pi t \sigma_i^2}}
 \min\Big(2\sqrt{t\bar \sigma_i^2(\varepsilon)}, \frac{\varepsilon}{2}\Big)\\
 &\quad
 +t\big|\lambda_1(\varepsilon)-\lambda_2(\varepsilon)\big|
 +t\big(\lambda_1(\varepsilon)\wedge\lambda_2(\varepsilon)\big)
 \big\|\frac{\nu_1^\varepsilon}{\lambda_1(\varepsilon)}-\frac{\nu_2^\varepsilon}{\lambda_2(\varepsilon)}\big\|_{TV},
 \end{align*}
with $\nu_j^\varepsilon=\nu_j(\cdot\cap (\R\setminus(-\varepsilon,\varepsilon)))$ and $\lambda_j(\varepsilon)=\nu_j^\varepsilon(\R)$.

\end{theorem}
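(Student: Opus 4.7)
The plan is to exploit the Lévy-Itô decomposition, writing each marginal as an independent sum of a Gaussian, a small-jumps martingale, and a compound Poisson part, and to combine the triangle inequality with the elementary fact that total variation is subadditive under convolution, $\|P_1\ast R-P_2\ast R\|_{TV}\le\|P_1-P_2\|_{TV}$ (a consequence of the coupling characterisation in Lemma \ref{property:tv}(2)). The three summands in the asserted bound will correspond respectively to a Gaussian--Gaussian comparison, to the Gaussian approximation of the small jumps, and to the comparison of the two compound Poisson parts.

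First I would replace, for each $j=1,2$, the small jump component $X_t^{j,S}(\varepsilon)$ inside $X_t^j$ by an independent $\No(0,t\bar\sigma_j^2(\varepsilon))$, so that $X_t^j$ becomes $\widetilde G_t^j+X_t^{j,B}(\varepsilon)$ with $\widetilde G_t^j\sim\No(tb_j(\varepsilon),t(\sigma_j^2+\bar\sigma_j^2(\varepsilon)))$. Subadditivity under convolution (applied first to strip the independent compound Poisson $X_t^{j,B}(\varepsilon)$ and then the independent Gaussian $G_t^j\sim\No(tb_j(\varepsilon),t\sigma_j^2)$) bounds the cost of this replacement for process $j$ by exactly the quantity estimated in Proposition \ref{prop:TVprocesses} with $\Sigma=\sigma_j$. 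Summing over $j=1,2$ after the triangle inequality produces the middle sum $\sum_j\sqrt{2/(\pi t\sigma_j^2)}\min(2\sqrt{t\bar\sigma_j^2(\varepsilon)},\varepsilon/2)$ in the stated bound.

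After this step it remains to estimate $\|\Li(\widetilde G_t^1+X_t^{1,B}(\varepsilon))-\Li(\widetilde G_t^2+X_t^{2,B}(\varepsilon))\|_{TV}$. Inserting the intermediate law $\Li(\widetilde G_t^2+X_t^{1,B}(\varepsilon))$ and applying convolution subadditivity twice bounds this by
\[\|\Li(\widetilde G_t^1)-\Li(\widetilde G_t^2)\|_{TV}+\|\Li(X_t^{1,B}(\varepsilon))-\Li(X_t^{2,B}(\varepsilon))\|_{TV}.\]
For the second summand, Theorem \ref{teo:CPPTV} applied with Poisson means $t\lambda_j(\varepsilon)$ and jump laws $\nu_j^\varepsilon/\lambda_j(\varepsilon)$ delivers the last line of the statement, using the elementary inequality $1-e^{-x}\le x$ to pass from $1-e^{-t|\lambda_1(\varepsilon)-\lambda_2(\varepsilon)|}$ to $t|\lambda_1(\varepsilon)-\lambda_2(\varepsilon)|$.

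For the remaining Gaussian--Gaussian total variation, let $\Sigma_j:=\sqrt{\sigma_j^2+\bar\sigma_j^2(\varepsilon)}$ and assume without loss of generality $\Sigma_2\ge\Sigma_1$. Inserting the intermediate Gaussian $\No(tb_1(\varepsilon),t\Sigma_2^2)$ splits the distance into a pure mean shift against the Gaussian kernel of the larger variance and a pure centred scale change. The mean shift is controlled by Proposition \ref{prop:TV} with $\mu=\delta_{tb_1(\varepsilon)}$, $\nu=\delta_{tb_2(\varepsilon)}$ and $G=\No(0,t\Sigma_2^2)$, whose density has BV-norm $\sqrt{2/(\pi t\Sigma_2^2)}$ together with $\Wi_1(\delta_{tb_1(\varepsilon)},\delta_{tb_2(\varepsilon)})=t|b_1(\varepsilon)-b_2(\varepsilon)|$, which yields the drift contribution $\sqrt{t/(2\pi)}|b_1(\varepsilon)-b_2(\varepsilon)|/(\Sigma_1\vee\Sigma_2)$. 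For the scale change $\|\No(0,t\Sigma_1^2)-\No(0,t\Sigma_2^2)\|_{TV}$, scale invariance reduces it to $\|\No(0,1)-\No(0,r^2)\|_{TV}$ with $r=\Sigma_2/\Sigma_1\ge 1$; a direct computation locating the two crossing points $\pm x^\ast$ of the densities and the trivial bound by one when $r$ is large yield the estimate $\sqrt{2}(1-1/r)=\sqrt{2}|\Sigma_1-\Sigma_2|/(\Sigma_1\vee\Sigma_2)$. Obtaining this last bound with the sharp constant $\sqrt 2$ is the main technical point of the proof; every other piece falls out cleanly from the tools already assembled in Sections 3 and 4.
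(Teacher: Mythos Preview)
Your proof is correct and follows essentially the same route as the paper: the Lévy--Itô decomposition, subadditivity of total variation under convolution, Proposition~\ref{prop:TVprocesses} for the small-jump replacement, Theorem~\ref{teo:CPPTV} (with $1-e^{-x}\le x$) for the compound Poisson part, and a Gaussian--Gaussian bound for the remaining piece. The only difference is cosmetic: the paper simply invokes the inequality
\[
\|\No(\mu_1,\sigma_1^2)-\No(\mu_2,\sigma_2^2)\|_{TV}\le \frac{\tfrac{1}{\sqrt{2\pi}}|\mu_1-\mu_2|+\sqrt 2\,|\sigma_1-\sigma_2|}{\sigma_1\vee\sigma_2}
\]
as a ``classical bound'', whereas you sketch its derivation (mean shift via Proposition~\ref{prop:TV}, scale change by locating the crossings). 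That is a harmless elaboration, not a different approach.
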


\begin{proof}
 By subadditivity of the total variation distance and by the triangle inequality,
 \begin{align*}
  \Big\|P_t^{(b_1,\sigma_1,\nu_1)}-P_t^{(b_2,\sigma_2,\nu_2)}\Big\|_{TV}& \leq \sum_{i=1}^2\Big\|P_t^{(0,\sigma_i,\nu_i(\varepsilon))}-P_t^{(0,\sqrt{\sigma_i^2+\bar\sigma_i^2(\varepsilon)},0)}\Big\|_{TV}\\
  &\quad +\Big\|P_t^{(b_1(\varepsilon),\sqrt{\sigma_1^2+\bar\sigma_1^2(\varepsilon)},0)}-P_t^{(b_2(\varepsilon),\sqrt{\sigma_2^2+\bar\sigma_2^2(\varepsilon)},0)}\Big\|_{TV}\\
  &\quad + \big\|\Li\big(X_t^{1,B}(\varepsilon)\big)-\Li\big(X_t^{2,B}(\varepsilon)\big)\big\|_{TV}.
 \end{align*}
The proof follows from Proposition \ref{prop:TVprocesses}, the classical bound $$\|\No(\mu_1,\sigma_1^2)-\No(\mu_2,\sigma_2^2)\|_{TV}\leq \frac{\frac{1}{\sqrt{2\pi}}|\mu_1-\mu_2|+\sqrt 2|\sigma_1-\sigma_2|}{\sigma_1\vee\sigma_2}.
$$
and   Theorem \ref{teo:CPPTV}.
\end{proof}

Another useful result follows directly from Proposition \ref{prop:CTMR} with $j=1$ and allows to bound the total variation distance for L\'evy processes with positive Gaussian part by the Toscani-Fourier distance for the same L\'evy processes, but with a smaller Gaussian part.

\begin{theorem}\label{teo:TVToscani}
Let $X^i\sim(b_i,\sigma_i^2,\nu_i)$, with $\sigma_i>0$, $i=1,2$, be two Lévy processes. For any $\Sigma\in(0,\sigma_1\wedge\sigma_2)$  consider the Lévy processes $\widetilde X^i\sim(b_i,\sigma_i^2-\Sigma^2,\nu_i)$, $i=1,2$. Then,
$$\big\|\Li(X_t^1)-\Li(X_t^2)\big\|_{TV}\leq \frac{\max\big(\E[| X_t^1|], \E[| X_t^2|]\big)^{1/3} \big(T_1(\widetilde X_t^1,\widetilde X_t^2)\big)^{2/3}}{( 16\pi)^{1/6}\Sigma\sqrt t}.
$$
\end{theorem}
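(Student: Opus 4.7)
The plan is to exploit the Lévy--Itô decomposition to peel off a Gaussian factor of variance $\Sigma^2 t$ from each $X_t^i$, and then to apply Proposition \ref{prop:CTMR} with $j=1$ to the resulting Gaussian-convolved pair. Concretely, since $\sigma_i^2=(\sigma_i^2-\Sigma^2)+\Sigma^2$ and the Lévy--Itô decomposition allows the independent splitting of the Gaussian component, the marginal of $X^i$ at time $t$ equals in law $\widetilde X_t^i+\Sigma W_t^i$ with $W^i$ an independent standard Brownian motion. Equivalently,
$$\Li(X_t^i)=\Li(\widetilde X_t^i)\ast \No(0,\Sigma^2 t),\qquad i=1,2,$$
so we are reduced to bounding the total variation distance between two measures convolved with the same Gaussian kernel $G=\No(0,\Sigma^2 t)$.

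Next I would apply Proposition \ref{prop:CTMR} with $j=1$, $\mu_i=\Li(\widetilde X_t^i)$ and $G$ as above. The Toscani-Fourier factor on the right-hand side is $T_1(\mu_1,\mu_2)=T_1(\widetilde X_t^1,\widetilde X_t^2)$ by definition, and the constant $C_1$ produced by the proposition is, by independence,
$$C_1=\max\bigl(\E[|\widetilde X_t^1+\Sigma W_t^1|],\E[|\widetilde X_t^2+\Sigma W_t^2|]\bigr)=\max\bigl(\E[|X_t^1|],\E[|X_t^2|]\bigr).$$
It remains to compute $\|g'\|_2$ for $g$ the density of $\No(0,\Sigma^2 t)$. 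From the explicit calculation recalled in the preceding remark (with $\sigma$ there replaced by $\Sigma\sqrt t$),
$$\|g'\|_2^2=\frac{1}{4\sqrt\pi\,(\Sigma\sqrt t)^3},\qquad \text{hence}\qquad \|g'\|_2^{2/3}=\frac{1}{2^{2/3}\pi^{1/6}\,\Sigma\sqrt t}.$$

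Substituting these into Proposition \ref{prop:CTMR} gives
$$\|\Li(X_t^1)-\Li(X_t^2)\|_{TV}\le \frac{\max(\E[|X_t^1|],\E[|X_t^2|])^{1/3}\,T_1(\widetilde X_t^1,\widetilde X_t^2)^{2/3}}{2^{2/3}\pi^{1/6}\,\Sigma\sqrt t},$$
and the elementary identity $2^{2/3}\pi^{1/6}=(16\pi)^{1/6}$ produces exactly the claimed bound. There is no real obstacle here: the theorem is essentially a repackaging of Proposition \ref{prop:CTMR} once the Gaussian component of variance $\Sigma^2 t$ has been identified as the convolution kernel. The only mild point to verify is the integrability assumption $\mu_i\in\mathcal P_1(\R)$ required by Proposition \ref{prop:CTMR}, which is automatic as soon as the right-hand side of the claimed inequality is finite (otherwise the statement is vacuous).
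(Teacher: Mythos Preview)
Your proof is correct and matches the paper's approach exactly: the paper simply states that the result follows directly from Proposition \ref{prop:CTMR} with $j=1$, and you have supplied precisely the details of that application (peeling off the $\No(0,\Sigma^2 t)$ factor via the Lévy--Itô decomposition, identifying $C_1$ and $\|g'\|_2$, and simplifying the constant). There is nothing to add.
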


\section{A statistical application}

\subsection{Lower bounds in the minimax sense}\label{subsec:minimax}
One of the main goals in statistics is to estimate a quantity of interest from the data. There are different criteria that can be used to judge the quality of an estimator. In nonparametric statistics it is common to use a minimax approach. Let us recall the classical setting. From the data $(X_1,\dots,X_n)$ one wants to recover a quantity of interest $\theta$ (e.g. $\theta$ is the density of the observations, or the regression function, or the Léyy density, or the diffusion coefficient, etc.). In practice $\theta$ is unknown (but supposed to belong to a certain \emph{parameter space} $\Theta$) and one needs to estimate it via an estimator (a measurable function of the data) $\hat \theta_n=\hat \theta_n(X_1,\dots,X_n)$. To measure the accuracy of the estimator one computes the \emph{minimax risk}
$$\mathcal R_n^*:=\inf_{T_n}\sup_{\theta\in\Theta}\E\big[d^2(\theta,T_n)\big],$$
where the infimum is taken over all possible estimators $T_n$ of $\theta$ and $d$ is a semi-distance on $\Theta$. Furthermore, one says that a positive sequence $(\psi_n)_{n\geq 1}$ is an \emph{optimal rate of convergence} of estimators on $(\Theta,d)$ if there exist constants $C<\infty$ and $c>0$ such that
\begin{equation}\label{eq:ub}
\limsup_{n\to\infty}\psi_n^{-2}\mathcal R_n^*\leq C\quad \quad \text{                  (upper bound)}
\end{equation}
and
\begin{equation}\label{eq:lb}
\liminf_{n\to\infty}\psi_n^{-2}\mathcal R_n^*\geq c, \quad \quad  \text{                 (lower bound)}.
\end{equation}
The goal is then to construct an estimator $\theta_n^*$ such that
$$\sup_{\theta\in\Theta} \E\big[d^2(\theta_n^*,\theta)\big]\leq C'\psi_n^2$$
where $(\psi_n)_{n\geq 1}$ is the optimal rate of convergence and $C'<\infty$ is a constant.

The usual way to proceed is to build an estimator $\hat\theta_n$ of $\theta$ and start the investigation about its performance firstly via an upper bound like \eqref{eq:ub}. This is important since the first thing to check is that the considered estimator is at least \emph{consistent}, that is automatically implied if $\sup_{\theta\in\Theta}\E\big[d^2(\theta,\hat \theta_n)\big]\to0$. After that, a natural question is whether one could construct a better (in terms of rate of convergence in the class $(\Theta,d)$) estimator. In order to ensure that it is not possible to obtain a better estimator than the one already constructed one has to prove a lower bound, that is it is needed to prove that the rate of convergence of any other possible estimator of $\theta$ will not be faster than the rate obtained in the upper bound. This is in general a difficult task and we refer to Chapter 2 in \cite{T} for general techniques to prove lower bounds. Without recalling all the steps needed to prove a lower bound following \cite{T}, let us stress here that one of the fundamental ingredients is to have a fine upper bound for the total variation distance or other measure distances. To that end the estimates in Section \ref{subsec:TV} can be of general interest to prove lower bounds in the minimax sense.

One situation when this general procedure applies is the following, where we show how to simplify the arguments used in \cite{JR} in order to prove the desired lower bound for an estimator of the integrated volatility.

\subsection{How to simplify the proof of the lower bound in \cite{JR}}\label{subsec:JR}

In \cite{JR}, the authors consider a one-dimensional Itô-semimartingale
$X$ with characteristics $(B,C,\nu)$:
$$B_t=\int_0^t b_sds,\quad C_t=\int_0^t c_sds,\quad \nu(dt,dx)=dtF_t(dx).$$
They assume that $X$ belongs to the class $\mathcal S_A^r$ of all Itô-semimartingales that satisfy
$$|b_t|+c_t+\int (|x|^r\wedge 1)F_t(dx)\leq A \quad \forall t\in[0,1].$$
Their goal is to estimate the integrated volatility $C$ at time $1$, $C(X)_1$, from high-frequency observations $X_{\frac{i}{n}}$, $i=0,\dots,n$. They have an upper bound for an estimator of $C(X)_1$ and they want to prove that the rate of convergence attained by that estimator is optimal. To that aim they need to prove that any uniform rate $\psi_n$ for estimating $C(X)_1$ satisfies
\begin{equation}\label{eq:JRlb}
\psi_n\geq (n\log n)^{-\frac{2-r}{2}}\quad \text{if }\ r>1.
\end{equation}
Following \cite{T}, their strategy consists in finding two Lévy processes $X^i\sim(b_i,\sigma_i^2,F_i)$, $i=1,2$, such that
\begin{enumerate}
\item $\sigma_1^2-\sigma_2^2=a_n:=(n\log n)^{-\frac{2-r}{2}}$, $r\in(0,2)$,
\item $\int (|x|^r\wedge 1)F_i(dx)\leq K$,
\item $\|\mathscr L((X_{i/n}^1)_{1\leq i\leq n})-\mathscr L((X_{i/n}^2)_{1\leq i\leq n})\|_{TV}\to 0$ as $n\to \infty$.
\end{enumerate}
The construction in \cite{JR} of the Lévy processes as well as the proof of the convergence in total variation stated in Point 3. above is very involved. Let us now see how to use Theorem \ref{teo:TVToscani} to prove \eqref{eq:JRlb}  more easily.
To that aim consider two sequences of Lévy processes $X^{1,n}\sim(0,1+a_n,F^1_{n})$ and $X^{2,n}\sim(0,1,F^2_{n})$
with Lévy measures $F^1_n$ and $F^2_n$ satisfying the following conditions:
\begin{itemize}
\item $\int_{\R}(|x|^r\wedge 1)F^i_n(dx)\leq K$, $i=1,2.$
\item Define
$\Psi_{i,n}:=\int_{\R}(e^{iux}-1-iux\I_{|x|\leq 1})F^i_n(dx)$, $i=1,2.$
Then $\Psi_{1,n}$ and $\Psi_{2,n}$ are real positive functions such that
\begin{equation}\label{eq:psi}
\Psi_{2,n}(u)=\frac{a_n}{2} + \Psi_{1,n}(u), \quad \forall |u|<u_n:=2\sqrt{n\log n}.
\end{equation}
\end{itemize}
It is not difficult to see that Lévy measures $F^1_n$ and $F^2_n$ satisfying such conditions always exist.
In particular, it follows from \eqref{eq:psi} that the $X^{i,n}$, $i=1,2$, have the same characteristic function for all $|u|<u_n$, i.e.:
\begin{align*}
\E\Big[e^{iuX^{1,n}_{1/n}}\Big]&=\exp\Big(-\frac{u^2}{2n}(1+a_n)-\frac{\Psi_{1,n}(u)}{n}\Big),\\
\E\Big[e^{iuX^{2,n}_{1/n}}\Big]&=\exp\Big(-\frac{u^2}{2n}-\frac{\Psi_{2,n}(u)}{n}\Big).
\end{align*}
In order to apply Theorem \ref{teo:TVToscani} let us observe that $X^{1,n}_{1/n}$ (resp. $X^{2,n}_{1/n}$) is equal in law to the convolution between a Gaussian distribution $\mathcal N\big(0,\frac{1}{8n}\big)$ and $\widetilde X^{1,n}_{1/n}$ (resp. $\widetilde X^{2,n}_{1/n}$), where $\widetilde X^{1,n}\sim\big(0,\frac{7}{8}+a_n, F^1_n\big)$ (resp. $\widetilde X^{2,n}\sim\big(0,\frac{7}{8}, F^2_n\big)$).
We  obtain
\begin{align*}
\|\mathscr L(X^{1,n}_{1/n})-\mathscr L(X^{2,n}_{1/n})\|_{TV}\leq \Big(\frac{32}{\pi}\Big)^{1/6} \big(C_n n^{3/4}T_1(\mu_n, \nu_n)\big)^{2/3},
\end{align*}
where $C_n^2=\max\big(\E[| X^{1,n}_{1/n}|],\E[| X^{2,n}_{1/n}|]\big)$.
We are therefore left to compute $T_1(\mu_n, \nu_n)$ and show that $n\|\mathscr L(X_{1/n}^{1,n})-\mathscr L(X_{1/n}^{2,n})\|_{TV}\to 0$.
\begin{align*}
&T_1(\mu_n, \nu_n)=\sup_{u\in\R}\frac{\exp\big(-\frac{u^2}{2n}(\frac{7}{8}+a_n)-\frac{\Psi_{1,n}(u)}{n}\big)-\exp\big(-\frac{7u^2}{16n}-\frac{\Psi_{2,n}(u)}{n}\big)}{u}\\
&=\sup_{|u|>u_n}\frac{\exp\big(-\frac{7u^2}{16n})\Big[\exp\big(-\frac{u^2a_n}{2n}-\frac{\Psi_{1,n}(u)}{n}\big)-\exp\big(-\frac{\Psi_{2,n}(u)}{n}\big)\Big]}{u}\\
&\leq \frac{\exp\big(-\frac{7u_n^2}{16n})}{u_n}=\frac{\exp\big(-\frac{7\times4n\log n}{16n})}{2\sqrt{n\log n}}=\frac{n^{-9/4}}{2\sqrt{\log n}}.
\end{align*}
Hence,
$$\|\mu_n*G_n-\nu_n*G_n\|_{TV}\leq \bigg(C_n n^{3/4}\frac{n^{-9/4}}{\sqrt{\log n}}\bigg)^{2/3}=\Big(\frac{32}{\pi}\Big)^{1/6}\frac{C_n^{2/3} n^{-1}}{(\log n)^{1/3}}.$$
Therefore,
\begin{align*}
\|\mathscr L((X_{i/n}^{1,n})_{1\leq i\leq n})-\mathscr L((X_{i/n}^{2,n})_{1\leq i\leq n})\|_{TV}&\leq \sqrt{n\|\mathscr L(X^{1,n}_{1/n})-\mathscr L(X^{2,n}_{1/n})\|_{TV}}
\\&\leq \Big(\frac{32}{\pi}\Big)^{1/12}\Big(\frac{C_n}{\sqrt {\log n}}\Big)^{1/3}\to 0,
\end{align*}
as desired.

\providecommand{\bysame}{\leavevmode\hbox to3em{\hrulefill}\thinspace}
\providecommand{\MR}{\relax\ifhmode\unskip\space\fi MR }
\providecommand{\MRhref}[2]{%
  \href{http://www.ams.org/mathscinet-getitem?mr=#1}{#2}
}
\providecommand{\href}[2]{#2}

\end{document}